\newtheorem{thm}{Theorem}[section]
\newtheorem{cor}[thm]{Corollary}
\newtheorem{lem}[thm]{Lemma}
\newtheorem{prop}[thm]{Proposition}
\theoremstyle{definition}
\newtheorem{defin}[thm]{Definition}
\theoremstyle{definition}
\newtheorem{exm}[thm]{Example}
\newtheorem{remark}[thm]{Remark}
\newtheorem{examples}[thm]{Examples}
\theoremstyle{remark}
\newtheorem*{rem}{Remark}
\newtheorem*{ack}{Acknowledgment}
\newcommand{\R}{{\mathbb R}}
\newcommand{\C}{{\mathbb C}}
\begin{document}
\def\X#1#2{r(v^{#2}\ds{\prod_{i \in #1}}{x_{i}})}
\def\skp#1{\vskip#1cm\relax}
\def\C{{\mathbb C}}
\def\R{{\mathbb R}}
\def\ce{{\mathbb C}}
\def\erre{{\mathbb R}}
\def\efe{{\mathbb F}}
\def\ene{{\mathbb N}}
\def\UNO{1\mkern-7mu1}
\def\ee{{\mathbb E}}
\def\pee{{\mathbb P}}
\def\ene{{\mathbb N}}
\def\tn{{\bf T$^{n}$}}
\def\pn{$P^{n}\/$}
\def\cn{{\bf C$^{n}$}}
\def\z2{{\bf Z}$_{2}$}
\def\zl2{{\bf Z}$_{(2)}$}
\def\block{\rule{2.4mm}{2.4mm}}
\def\rplus{{\bf R$_{+}}}
\def\nd{\noindent}
\def\ds{\displaystyle}
\def\s{\sigma}
\def\sq{\operatorname{Sq}}
\numberwithin{equation}{section}
\title{The $\bm{KO^{*}}$-rings of $\bm{BT^m}$, the Davis-Januszkiewicz Spaces and
certain toric manifolds}

\author[L.~Astey]{L.~Astey}
\address{Department of Mathematics,
Cinvestav, San Pedro Zacatenco, Mexico, D.F. CP 07360 Apartado
Postal 14-740, Mexico}
\email{astey@math.cinvestav.mx}

\author[A.~Bahri]{A.~Bahri}
\address{Department of Mathematics,
Rider University, Lawrenceville, NJ 08648, U.S.A.}
\email{bahri@rider.edu}

\author[M.~Bendersky]{M.~Bendersky}
\address{Department of Mathematics
CUNY,  East 695 Park Avenue New York, NY 10065, U.S.A.}
\email{mbenders@xena.hunter.cuny.edu}

\author[F.~R.~Cohen]{F.~R.~Cohen}
\address{Department of Mathematics,
University of Rochester, Rochester, NY 14625, U.S.A.}
\email{cohf@math.rochester.edu}

\author[D.~M.~Davis]{D.~M.~Davis}
\address{Department of Mathematics,
Lehigh University, Bethlehem, PA 18015, U.S.A.}
\email{dmd1@lehigh.edu}

\author[S.~Gitler]{S.~Gitler}
\address{Department of Mathematics,
Cinvestav, San Pedro Zacatenco, Mexico, D.F. CP 07360 Apartado
Postal 14-740, Mexico} \email{sgitler@math.cinvestav.mx}

\author[M.~Mahowald]{M.~Mahowald}
\address{Department of Mathematics,
Northwestern University, Northwestern University
2033 Sheridan Road Evanston, IL 60208-2730, USA} 
\email{mark@math.northwestern.edu}

\author[N.~Ray]{N.~Ray}
\address{School of Mathematics,
University of Manchester, Oxford Road Manchester M13 9PL, United Kingdom} \email{nige@maths.manchester.ac.uk}

\author[R.~Wood]{R.~Wood}
\address{School of Mathematics,
University of Manchester, Oxford Road Manchester M13 9PL, United Kingdom} \email{Reg.Wood@manchester.ac.uk}

\subjclass{Primary: 5E45, 14M25, 55N15, 55T15 \/ Secondary: 
55P42, 55R20, 57N65}

\keywords{Quasitoric manifolds, Davis-Januszkiewicz Space, $KO$-theory, $K$-theory, product of 
projective spaces, Stanley-Reisner ring, toric varieties, toric manifolds,  classifying space of a torus}

\begin{abstract}

This paper  contains an explicit computation of the 
$KO^{*}$-ring structure of an $m$-fold product of $\mathbb{C}P^{\infty}$, 
the Davis-Januszkiewicz spaces and of toric manifolds which have trivial $\sq^2$-homology. 
A key ingredient is the stable splitting of the Davis-Januszkiewicz spaces given in \cite{bbcg}.

\end{abstract}

\maketitle
\markboth{The $KO^{*}$-rings of $BT^m$, the Davis-Januszkiewicz Spaces and
certain toric manifolds}
{The $KO^{*}$-rings of $BT^m$, the Davis-Januszkiewicz Spaces and
certain toric manifolds}

\section{Introduction}\label{sec:introduction}
The term ``toric manifold'' in this paper refers to the topological space about which
detailed information may be found in
\cite{davis.jan} and  \cite{buchstaber.panov.2}; a brief description
is given in Section \ref{sec:tm}. These spaces are called also ``quasitoric manifolds''
and include the class of all non-singular projective toric varieties.

An $n$-torus $T^n$ acts on a toric manifold $M^{2n}$ with quotient space a simple polytope 
$P^n$ having $m$ codimension-one faces (facets). Associated to $P^n$ is a simplicial
complex $K_P$ on vertices $\{v_1,v_2, \ldots, v_m\}$ with each $v_i$ corresponding to a
single facet $F_i$ of $P^n$. The set  $\{v_{i_1},v_{i_2}, \ldots, v_{i_k}\}$ is a simplex
in $K_P$ if and only if 
$F_{i_1} \cap F_{i_2} \cap \ldots \cap F_{i_k} \neq \emptyset$.

The classifying space of the real $n$-torus $T^n$ is denoted by $BT^n$. 
Associated to the torus action is a Borel-space fibration

\begin{equation}\label{eqn:basic.fibration}
M^{2n} \longrightarrow  ET^n \times_{T^n} M^{2n}  
 \stackrel{p}{\longrightarrow}\ BT^n .\end{equation}

\

\nd Of course here, $ BT^n = 
\mathbb{C}P^{\infty} \times \mathbb{C}P^{\infty} \times \ldots \times \mathbb{C}P^{\infty},
\; (n \text{ factors})$.

\

The homotopy type of the Borel space $ET^n \times_{T^n} M^{2n}$ depends
on $K_P$ only. It is referred to as the Davis-Januszkiewicz  space of $K_P$ and is 
denoted by the symbol $\mathcal D\mathcal J(K_P)$. More generally, a Davis-Januszkiewicz
space exists for {\em any\/} simplicial complex $K$; Section \ref{sec:dj} contains
more details about this generalization. It is  known that  for any
complex-oriented cohomology theory $E^*$

\begin{equation}\label{eqn:dj.definition}
E^*(\mathcal D\mathcal J(K_P)) = E^*(BT^m)\big/I_{SR}^E \end{equation}

\

\nd where $I_{SR}^E$ is an ideal in $E^*(BT^m)$ described next. In
this context, the two-dimensional generators of the graded ring $E^*(BT^m)$ are
denoted by $\{v_1,v_2, \ldots, v_m\}$. The ideal  $I_{SR}^E$ is generated by 
all square-free monomials $v_{i_1}v_{i_2}\cdots v_{i_s}$ corresponding to 
$\{v_{i_1},v_{i_2}, \ldots, v_{i_s}\} \notin K_P.$
The ring (\ref{eqn:dj.definition}) is called the $E^*$-Stanley-Reisner ring.

For a toric manifold $M^{2n}$ an argument, based on the collapse of the Atiyah-Hirzebruch-
Serre spectral sequence for  (\ref{eqn:basic.fibration}), yields an isomorphism of $E^*$-algebras

\begin{equation}\label{eqn:cohomology}
E^*(M^{2n}) \cong E^*(\mathcal D\mathcal J(K_P))\big{/}J^E
\end{equation}

\

\nd where the ideal $J^E$ is generated by $p^*(E^2(BT^n))$ and therefore by the $E$-theory
Chern classes of certain associated line bundles, (\cite{buchstaber.ray}, page 18 and
\cite{civan.ray}, page 6).

For the case of non-singular compact projective toric varieties and $E$ equal to ordinary singular 
cohomology with integral coefficients ($E = H\mathbb{Z}$),
this is the celebrated result of Danilov and Jurkewicz, \cite{danilov}. The $E = H\mathbb{Z}$ version 
for the topological generalization of compact smooth toric varieties, is due to Davis and Januszkiewicz
\cite{davis.jan}. For certain singular toric varieties, the results of \cite{danilov} cover also the case  
$E = H\mathbb{Q}$.

The question of an analogue of (\ref{eqn:cohomology}) for a non-complex-oriented theory 
arises naturally. The obvious first candidate is $KO$-theory. 
The ring structure of 

\begin{equation}\label{eqn:product}
KO^*(BT^m) = 
KO^{*}(\prod_{i=1}^{m}{\mathbb{C}P^{\infty}})
\end{equation}

\

\nd does not seem to appear in the literature for $m > 2$. The thesis of Dobson
\cite{dobson} deals with the case $m = 2$. The $KO^{*}$-algebra structure of 
$KO^{*}(\mathbb{C}P^n)$ and $KO^{*}(\mathbb{C}P^\infty)$ appears explicitly for the 
first time  in \cite{civan.ray} where it is discussed in the context of a theorem of  Wood
which is mentioned in Section \ref{sect:basic.calculation} below.

Two different presentations for the ring $KO^*(BT^m)$ are given in 
Sections \ref{sec:ring.structure} and \ref{sec:ijays}.
Following Atiyah and Segal \cite{atiyah.segal}, these provide a
description of the completion of the representation ring $RO(T^m)$ at the
augmentation ideal. The calculation herein may be interpreted in
that context, along the lines of Anderson \cite{anderson}. In particular, the fact
that the complexification map and the realification map

\begin{align}\label{eqn:c}
c\colon KO^*(BT^m) \;&\longrightarrow\; KU^*(BT^m) \end{align}

\begin{align}\label{eqn:r}
r\colon KU^*(BT^m) \;&\longrightarrow\; KO^*(BT^m) 
\end{align}

\

\nd are injective and surjective respectively, is used throughout. 
This follows from the Bott exact sequence 

\
\begin{equation}\label{eqn:bott.seq}
\ldots \xrightarrow{} KO^{*+1}(X) \xrightarrow{\cdot e} KO^{*}(X) 
\xrightarrow{\chi} K^{*+2}(X) \xrightarrow{r} KO^{*+2}(X) \xrightarrow{} \ldots
\end{equation}
\

\nd where $\chi$ is complexification (\ref{eqn:c}) followed by multiplication by $v^{-1}$ the
Bott element. Since $KO^{*}(BT^m)$ is concentrated in even degree, the Bott sequence
implies that the realification map $r$ is surjective and complexification $c$ is injective.
They are related by
\begin{equation}\label{eqn:r.and.c}
(r\circ c)(x) = 2x \quad \text{and} \quad (c \circ r)(x) = x + \overline{x}.
\end{equation}

The complexity of the calculation is
a result of  the fact that the realification map $r$ is {\em not \/}a ring homomorphism. The
first presentation generalizes the methods of \cite{dobson}. A companion result is given
for $KO^{*}(\bigwedge_{i = 1}^{m}\mathbb{C}P^{\infty})$. 
The second approach produces generators better suited to the task of giving a description of 
$KO^*(\mathcal D\mathcal J(K_P))$ in terms of $KO^*(BT^m)$. The results of  \cite{bb}
are used then to give a description of $KO^*(M^{2n})$ analogous to (\ref{eqn:cohomology})
for any toric manifold which has no $\sq^2$-homology.

The group structure of $KO^{*}(\bigwedge_{i = 1}^{m}\mathbb{C}P^{\infty})$ is much more 
accessible than the ring structure. The Adams
spectral sequence yields a concise description in terms of the groups
$KU^{*}(\bigwedge_{i = 1}^{k}\mathbb{C}P^{\infty})$ with fewer smash product factors. This is 
discussed in the next section.










\
\section{The group structure of $KO^{*}(\bigwedge_{i = 1}^{m}\mathbb{C}P^{\infty})$}
\label{sect:basic.calculation}

\subsection{The $ko$-homology}\label{sec:ko.homology}
The calculation begins with the determination of $ko_{*}(BT^m)$,
the connective $ko$-homology corresponding to the spectrum $bo$.
The main tool is the Adams spectral sequence. It is used in conjunction with the following 
equivalence, which is well known among homotopy theorists and extends a result of Wood. 
Let  $bu$ denote the spectrum corresponding to connective complex $k$-theory. 

\begin{thm} \label{thm:bu.and.bo}
There is an equivalence of spectra

$$\bigvee_{k = 0}^{\infty} \Sigma^{4k+2}{bu}\; \longrightarrow\;
bo \wedge \mathbb{C}P^{\infty}.$$
\end{thm}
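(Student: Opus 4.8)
The plan is to reduce the statement to the classical theorem of Wood, which identifies $bo\wedge\mathbb{C}P^{2}$ with $\Sigma^{2}bu$ (equivalently the cofibre sequence $\Sigma bo\xrightarrow{\eta}bo\to bu$ obtained by smashing $bo$ with the cofibre $C\eta=S^{0}\cup_{\eta}e^{2}$, so that $\mathbb{C}P^{2}=\Sigma^{2}C\eta$), and then to propagate it along the cell structure of $\mathbb{C}P^{\infty}$. First I would record the mod-$2$ cohomology of the reduced $\mathbb{C}P^{\infty}$ as a module over $\A(1)=\langle\sq^{1},\sq^{2}\rangle$. Writing $x\in\tilde H^{2}(\mathbb{C}P^{\infty};\mathbb{F}_{2})$ for the generator, the Cartan formula gives $\sq^{2}(x^{k})=\binom{k}{1}x^{k+1}=k\,x^{k+1}$, so $\sq^{2}\colon\tilde H^{2k}\to\tilde H^{2k+2}$ is nonzero exactly when $k$ is odd, while $\sq^{1}=0$ for degree reasons (and because $H^{*}(\mathbb{C}P^{\infty};\mathbb{Z})$ is torsion free). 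Thus $\sq^{2}$ pairs the classes $\{x^{2j+1},x^{2j+2}\}$ for $j\geq 0$, and as an $\A(1)$-module $\tilde H^{*}(\mathbb{C}P^{\infty};\mathbb{F}_{2})$ splits as $\bigoplus_{j\geq 0}\Sigma^{4j+2}N$, where $N\cong\A(1)/\!/E(1)$ is the cyclic module with $\sq^{2}$ connecting its two generators, $E(1)=\langle\sq^{1},Q_{1}\rangle$.

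Geometrically this splitting is realised by the even skeletal filtration $\mathbb{C}P^{2}\subset\mathbb{C}P^{4}\subset\cdots\subset\mathbb{C}P^{\infty}$, whose successive quotients are $\mathbb{C}P^{2j+2}/\mathbb{C}P^{2j}\simeq\Sigma^{4j}\mathbb{C}P^{2}$ (two cells joined by a nontrivial $\sq^{2}$). Smashing this filtration with $bo$ and applying Wood's theorem to each quotient produces a filtration of $bo\wedge\mathbb{C}P^{\infty}$ whose associated graded is $\bigvee_{j\geq 0}\Sigma^{4j}(bo\wedge\mathbb{C}P^{2})\simeq\bigvee_{j\geq 0}\Sigma^{4j+2}bu$. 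To build the asserted map I would realise the algebraic generators one summand at a time: the $k=0$ summand is the image of the inclusion $\mathbb{C}P^{2}\hookrightarrow\mathbb{C}P^{\infty}$, giving $\Sigma^{2}bu\simeq bo\wedge\mathbb{C}P^{2}\to bo\wedge\mathbb{C}P^{\infty}$, and for each $k$ the bottom class of the $k$-th summand is detected in $ko_{4k+2}(\mathbb{C}P^{\infty})=\pi_{4k+2}(bo\wedge\mathbb{C}P^{\infty})$. Since $H^{*}(bu)$ is cyclic over $\A$ on its bottom class, such a class extends to a map $\Sigma^{4k+2}bu\to bo\wedge\mathbb{C}P^{\infty}$; wedging these gives the candidate $f$.

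It then remains to check that $f$ is a weak equivalence, for which I would run the Adams spectral sequence. By the change-of-rings isomorphism (using $H^{*}(bo)=\A/\!/\A(1)$ and $H^{*}(bo)\otimes\tilde H^{*}(\mathbb{C}P^{\infty})\cong\A\otimes_{\A(1)}\tilde H^{*}(\mathbb{C}P^{\infty})$ as $\A$-modules) the $E_{2}$-page is $\mathrm{Ext}_{\A(1)}(\tilde H^{*}(\mathbb{C}P^{\infty}),\mathbb{F}_{2})\cong\bigoplus_{j}\Sigma^{4j+2}\mathrm{Ext}_{E(1)}(\mathbb{F}_{2},\mathbb{F}_{2})$, which is precisely the $E_{2}$-page of $\bigvee_{k}\Sigma^{4k+2}bu$; this records Wood's theorem on cohomology summed over $j$, and shows $f$ induces an isomorphism on $H^{*}(-;\mathbb{F}_{2})$. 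The odd-primary and rational comparisons are straightforward since both spectra have homotopy concentrated in even degrees and are torsion free away from $2$. As both sides are connective and of finite type, a map inducing an isomorphism on $H_{*}(-;\mathbb{F}_{p})$ for every $p$ and rationally is a weak equivalence, which finishes the argument.

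The step I expect to be the main obstacle is passing from the matching of associated graded pieces to an honest splitting of $bo\wedge\mathbb{C}P^{\infty}$, that is, producing the maps $\Sigma^{4k+2}bu\to bo\wedge\mathbb{C}P^{\infty}$ for $k\geq 1$ and knowing that the higher attaching maps between consecutive $\sq^{2}$-pairs die. The primary ($\sq^{2}$) attaching between pairs already vanishes by the computation above (the relevant index $k=2j+2$ is even), but a priori there could be secondary attaching data detected by $\eta$; the point is that this is exactly what Wood's cofibre sequence $\Sigma bo\xrightarrow{\eta}bo\to bu$ annihilates after smashing with $bo$. Equivalently, the change-of-rings identification with $\mathrm{Ext}_{E(1)}(\mathbb{F}_{2},\mathbb{F}_{2})$ forces the relevant extension and differential problems to be trivial, since that $E_{2}$-page agrees with the one for $\bigvee_{k}\Sigma^{4k+2}bu$ and the latter collapses.
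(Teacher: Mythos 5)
Your overall architecture matches the paper's (change of rings to $\mathcal{A}_1$, the splitting of $H^{*}(\mathbb{C}P^{\infty};\mathbb{Z}_2)$ into shifted copies of $H^{*}(\mathbb{C}P^{2};\mathbb{Z}_2)$, Wood's theorem, collapse of the Adams spectral sequence, and a final homology/homotopy comparison), but there is a genuine gap at exactly the step you flag as the main obstacle, and neither of your proposed resolutions closes it. The claim that a class in $\pi_{4k+2}(bo\wedge\mathbb{C}P^{\infty})$ extends over $\Sigma^{4k+2}bu$ ``since $H^{*}(bu)$ is cyclic over $\mathcal{A}$'' is not a valid inference: cyclicity of cohomology is not an extension criterion. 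Extending a map off the bottom cell of $bu$ requires killing one obstruction for each cell of $bu$, and since $H^{*}(bu;\mathbb{Z}_2)\cong\mathcal{A}/\!/E(1)$ is nonzero in odd degrees (already in degree $7$, dual to $\xi_3$), $bu$ has odd-dimensional cells, so those obstructions land in the \emph{even} --- hence nonvanishing --- homotopy groups of $bo\wedge\mathbb{C}P^{\infty}$; nothing in your argument makes them vanish. Your fallback, that the change-of-rings identification ``forces the relevant extension and differential problems to be trivial,'' conflates an abstract isomorphism of $E_2$-pages with the existence of a map realizing it: agreement of Adams $E_2$-terms of two spectra does not, by itself, produce a map between them, and the map is precisely what the theorem asserts.

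The missing idea --- and the paper's actual mechanism --- is to use the $bo$-module structure of $bo\wedge\mathbb{C}P^{\infty}$ so that only one obstruction ever has to be checked. Since the $E_2$-term is concentrated in even degrees, $\pi_{4k+3}(bo\wedge\mathbb{C}P^{\infty})=0$, so the filtration-zero class $S^{4k+2}\to bo\wedge\mathbb{C}P^{\infty}$ has trivial $\eta$-multiple and extends over the single $\eta$-attached cell, giving a map from the \emph{finite} complex $\Sigma^{4k}\mathbb{C}P^{2}$. One then smashes this map with $bo$ and composes with the multiplication $\mu\colon bo\wedge bo\to bo$, forming
\begin{equation*}
bo\wedge\Sigma^{4k}\mathbb{C}P^{2}\;\longrightarrow\; bo\wedge bo\wedge\mathbb{C}P^{\infty}\;\xrightarrow{\;\mu\wedge 1\;}\; bo\wedge\mathbb{C}P^{\infty},
\end{equation*}
and Wood's equivalence $bo\wedge\mathbb{C}P^{2}\simeq\Sigma^{2}bu$ applied to the source yields the desired map $\Sigma^{4k+2}bu\to bo\wedge\mathbb{C}P^{\infty}$ with no obstruction theory over the cells of $bu$. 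Once these maps exist, your concluding comparison (iso on the $E_2$-page, equivalently on mod-$2$ cohomology, plus the odd-primary and rational checks) does finish the proof; so the gap is localized, but as written the maps whose existence the theorem requires are never actually constructed for $k\geq 1$.
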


\

\begin{proof} Background information about the Adams spectral sequence in connection 
with $ko$-homology calculations may be found in \cite{bb} or \cite{baybrun}. A change of rings 
theorem implies that the  $E_2$-term of the Adams spectral sequence for  
$ko_{*}(\mathbb{C}P^{\infty}) = \pi_{*}(bo \wedge \mathbb{C}P^{\infty})$ depends on the 
$\mathcal{A}_1$-module structure of $H^{*}(\mathbb{C}P^{\infty}; \mathbb{Z}_{2})$
where $\mathcal{A}_1$ denotes the sub-algebra of the Steenrod algebra $\mathcal{A}$
generated by $\sq^1$ and $\sq^2$.

As an $\mathcal{A}_{1}$-module, $H^{*}(\mathbb{C}P^{\infty}; \mathbb{Z}_{2})$ is a sum
of shifted copies of $H^{*}(\mathbb{C}P^{2}; \mathbb{Z}_{2})$. Consequently, the $E_2$ term of the
spectral sequence is a sum of shifted copies of 
$\mbox{Ext}_{\mathcal{A}_{1}}^{s,t}(H^{*}(\mathbb{C}P^{2}; \mathbb{Z}_{2}), \mathbb{Z}_{2})$
and so has classes in even degree only. Hence, the spectral sequence collapses. A non-trivial
class in each $\pi_{4k+2}(bo \wedge \mathbb{C}P^{\infty})$ is represented in the
$E_2$-term by a generator of dimension $4k+2$ in filtration zero. The $\eta$-extension  on
this class is trivial as the $E_2$-term is zero in odd degree. So the map

\begin{equation}\label{eqn:bottom.cell}
S^{4k+2} \longrightarrow bo \wedge \mathbb{C}P^{\infty}\end{equation}

\

\nd extends over a $(4k+4)$-cell $\;e^{4k+4}$ attached by the Hopf map $\eta$. This gives a map

\begin{equation}\label{eqn:eta}
\Sigma^{4k}{\mathbb{C}P^{2}} = S^{4k+2} \cup_{\eta} e^{4k+4}
\longrightarrow bo \wedge \mathbb{C}P^{\infty}\end{equation}

\nd which extends to

\begin{equation}\label{eqn:splitting}
s\colon \bigvee_{k=0}^{\infty}\Sigma^{4k}{\mathbb{C}P^{2}}\; \longrightarrow \;
bo \wedge \mathbb{C}P^{\infty}.\end{equation}

\

\nd Smashing with $bo$ and composing with the product map 
$bo \wedge bo\; \xrightarrow{\mu}\; bo$ gives

\begin{equation}\label{eqn:bo.smash}
bo \wedge (\bigvee_{k=0}^{\infty}\Sigma^{4k}{\mathbb{C}P^{2}})\;
\xrightarrow{1\wedge s}\; bo \wedge bo \wedge \mathbb{C}P^{\infty} 
\; \xrightarrow{\mu \wedge 1} \;
bo \wedge \mathbb{C}P^{\infty}.\end{equation}

\

\nd The equivalence of spectra  $\Sigma^{2}{bu} \rightarrow bo \wedge {\mathbb{C}P^{2}}$, 
due to Wood and cited in \cite{jfa}, is used next to give a map

\begin{equation}\label{eqn:main.map}
g \colon\bigvee_{k = 0}^{\infty} \Sigma^{4k+2}{bu}
\longrightarrow
bo \wedge \mathbb{C}P^{\infty}.\end{equation}

\

\nd This map induces an isomorphism of stable homotopy groups and hence
gives the required equivalence of spectra. \end{proof}

\begin{rem}
An equivalence of the form \ref{eqn:bo.smash} follows also from the methods of  \cite{mahowald.ray}
and the fact that twice the Hopf bundle over ${\mathbb{C}}P^{\infty}$ is a Spin bundle and therefore
$bo$-orientable.
\end{rem}

The next corollary follows immediately.

\begin{cor}\label{cor:ko.homology}
There is an isomorphism of graded abelian groups

$$\bigoplus_{k=0}^{\infty} \Sigma^{4k+2}(\widetilde{ku}_{*}(\bigwedge_{i = 2}^{m}\mathbb{C}P^{\infty})) \; \longrightarrow \;
\widetilde{ko}_{*}(\bigwedge_{i = 1}^{m}\mathbb{C}P^{\infty}).$$

\end{cor}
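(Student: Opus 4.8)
The plan is to obtain this as a direct consequence of Theorem~\ref{thm:bu.and.bo}, by smashing the equivalence given there with the remaining $m-1$ factors and then passing to homotopy groups. Recall that reduced connective $KO$-homology is computed by $\widetilde{ko}_{*}(X) = \pi_{*}(bo \wedge X)$, so it is enough to identify the spectrum $bo \wedge \bigwedge_{i=1}^{m}\mathbb{C}P^{\infty}$ up to equivalence.

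First I would split off a single factor, writing $\bigwedge_{i=1}^{m}\mathbb{C}P^{\infty} = \mathbb{C}P^{\infty} \wedge Y$ with $Y = \bigwedge_{i=2}^{m}\mathbb{C}P^{\infty}$. Smashing the equivalence of Theorem~\ref{thm:bu.and.bo} with $Y$, and using that smashing with $Y$ commutes with wedges, gives
$$bo \wedge \mathbb{C}P^{\infty} \wedge Y \;\simeq\; \Bigl(\bigvee_{k=0}^{\infty}\Sigma^{4k+2}bu\Bigr) \wedge Y \;\simeq\; \bigvee_{k=0}^{\infty}\Sigma^{4k+2}(bu \wedge Y).$$
Taking homotopy groups and recognizing $\pi_{*}(bu \wedge Y) = \widetilde{ku}_{*}(Y)$ then yields the stated isomorphism; the map itself is induced by $g \wedge 1_{Y}$, where $g$ is the equivalence (\ref{eqn:main.map}).

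The only point needing care is the interchange of $\pi_{*}$ with the infinite wedge, that is, the passage to an infinite direct sum, and this is the sole (very mild) obstacle. It is harmless: since $bu$ is connective and $Y$ is $(2m-3)$-connected, each summand $\Sigma^{4k+2}(bu \wedge Y)$ is $(4k+2m-1)$-connected, so in any fixed total degree only finitely many values of $k$ contribute. Hence $\pi_{*}$ of the wedge is the direct sum of the $\pi_{*}$ of the summands, and the corollary follows at once.
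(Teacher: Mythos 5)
Your argument is correct and is precisely the paper's (implicit) proof: the paper simply states that the corollary ``follows immediately'' from Theorem \ref{thm:bu.and.bo}, the intended step being exactly your smashing of the equivalence with $\bigwedge_{i=2}^{m}\mathbb{C}P^{\infty}$ and passage to homotopy groups. Your extra care about commuting $\pi_{*}$ with the infinite wedge (via the connectivity of the summands) is a valid justification of a point the paper leaves unstated.
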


\

\nd Notice here that the summands on the left hand side are  the underlying groups of 
a tensor product of divided power algebras each of which is dual to a polynomial algebra.

\

Recall next that there are classes $e \in ko_1, \; \alpha \in ko_4, \; \beta \in ko_8$ so that

\begin{equation}\label{eqn:ko.gens}
ko_{*} = \mathbb{Z}[e, \alpha, \beta]\big/\langle2e, e^{3}, e\alpha, \alpha^2 - 4\beta\rangle
\end{equation}

\

\nd and a class $v \in ku_{2}$ so that

\begin{equation}\label{eqn:ku.gens}
ku_{*} = \mathbb{Z}[v]\end{equation}

\

\begin{remark}\label{rem:module.structure}
An examination of the $E_2$-term of the Adams spectral sequence for  
$ko_{*}(\mathbb{C}P^{2})$ reveals that the action of $ko_{*}$ on $ko_{*}(\mathbb{C}P^{2}) \cong ku_{*}$ is given by

\begin{equation}\label{eqn:module.relations}
e\cdot 1 = e\cdot v = 0,\; \alpha\cdot 1= 2v^2, \; \beta \cdot 1 = v^4
\end{equation}

\

\nd This coincides with the module action of $ko_{*}$ on $ku_{*}$ given by the 
``complexification'' map, (\cite{dobson}, page 16). \end{remark}




\


\subsection{From $ko$-homology to $KO$-cohomology.}
One consequence of the calculation above is that the Bott element 
$\beta$ acts as a monomorphism on $ko_{*}(\bigwedge_{i = 1}^{s}\mathbb{C}P^{\infty})$ and so can 
be inverted to get the periodic $KO$-homology of $\bigwedge_{i = 1}^{s}\mathbb{C}P^{\infty}$.

\begin{prop}\label{prop:ko.to.KO} There is an isomorphism of abelian groups

$$\bigoplus_{k=0}^{\infty} \widetilde{KU}_{*\; + \;4k + 2}(\bigwedge_{i = 2}^{m}\mathbb{C}P^{\infty})\longrightarrow \widetilde{KO}_{*}(\bigwedge_{i = 1}^{m}\mathbb{C}P^{\infty}).$$

\end{prop}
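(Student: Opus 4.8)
The plan is to deduce this periodic statement from the connective isomorphism of Corollary \ref{cor:ko.homology} by inverting the Bott class $\beta \in ko_8$. Recall that periodic $KO$-homology is obtained from connective $ko$-homology by inverting $\beta$, so that $\widetilde{KO}_n(X) = \mathrm{colim}_j\, \widetilde{ko}_{n+8j}(X)$, the colimit taken over multiplication by $\beta$; likewise $\widetilde{KU}_*(Y) = v^{-1}\widetilde{ku}_*(Y)$ is obtained from connective $ku$-homology by inverting $v \in ku_2$. The introduction to this subsection records that $\beta$ acts as a monomorphism on $\widetilde{ko}_*(\bigwedge_{i=1}^m \mathbb{C}P^\infty)$, so this colimit is a clean increasing union; in particular inverting $\beta$ is exact and commutes with the infinite direct sum appearing on the left of Corollary \ref{cor:ko.homology}, and no $\lim^1$ or torsion phenomena intervene.

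First I would observe that the equivalence of Theorem \ref{thm:bu.and.bo}, and hence Corollary \ref{cor:ko.homology} obtained from it by smashing with $\bigwedge_{i=2}^m \mathbb{C}P^\infty$, is an equivalence of left $bo$-module spectra. This is visible from the construction: the map \eqref{eqn:bo.smash} is built from $\mu \wedge 1$ and so is $bo$-linear for the action on the left-hand smash factor. Consequently the induced map on homotopy groups is an isomorphism of graded $ko_*$-modules, where $ko_*$ acts on the domain $\bigvee_k \Sigma^{4k+2}bu$ through the complexification map $c\colon bo \to bu$.

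Next I would identify the action of $\beta$ on the left-hand side. The key input is Remark \ref{rem:module.structure}, which identifies the $ko_*$-action on $ko_*(\mathbb{C}P^2) \cong ku_*$ with complexification; in particular $\beta\cdot 1 = v^4$. Thus $\beta$ acts on each summand $\Sigma^{4k+2}\widetilde{ku}_*(\bigwedge_{i=2}^m \mathbb{C}P^\infty)$ as multiplication by $v^4$, preserving the summand and raising degree by $8$, so inverting $\beta$ inverts $v$ on each $ku$-summand. Applying $\beta^{-1}(-)$ to Corollary \ref{cor:ko.homology} then turns the right-hand side into $\widetilde{KO}_*(\bigwedge_{i=1}^m \mathbb{C}P^\infty)$ and the left-hand side into $\bigoplus_k \Sigma^{4k+2}\, v^{-1}\widetilde{ku}_*(\bigwedge_{i=2}^m \mathbb{C}P^\infty) = \bigoplus_k \Sigma^{4k+2}\widetilde{KU}_*(\bigwedge_{i=2}^m \mathbb{C}P^\infty)$. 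Since $4k+2$ is even, the $2$-periodicity of $KU$ collapses the suspension and identifies this with $\bigoplus_k \widetilde{KU}_{*+4k+2}(\bigwedge_{i=2}^m \mathbb{C}P^\infty)$, the claimed group.

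The step I expect to require the most care is confirming that Corollary \ref{cor:ko.homology} genuinely respects the $ko_*$-module structure with $\beta$ acting as $v^4$ on the $ku$-summands, since this is precisely what lets the $\beta$-localization on the right be matched with the $v$-localization on the left. Once the $bo$-linearity of \eqref{eqn:bo.smash} and the value $c(\beta)=v^4$ from Remark \ref{rem:module.structure} are in hand, the remainder is formal: inverting an element is exact and commutes with direct sums, and the monomorphism property of $\beta$ guarantees the passage to the colimit produces the stated isomorphism of abelian groups directly.
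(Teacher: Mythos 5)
Your proposal is correct and follows essentially the same route as the paper: the paper's proof consists precisely of invoking Corollary \ref{cor:ko.homology} together with Remark \ref{rem:module.structure}, i.e.\ inverting the Bott element $\beta$ (acting monomorphically) and using the identification $\beta \mapsto v^4$ on the $ku$-summands so that $\beta$-localization becomes $v$-localization. Your additional care about $bo$-linearity of the splitting map \eqref{eqn:bo.smash} just makes explicit what the paper leaves implicit.
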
 

\begin{proof} The result follows from Corollary \ref{cor:ko.homology} and Remark
\ref{rem:module.structure} \end{proof}

\

\nd The proof of Theorem \ref{thm:bu.and.bo} works equally well in the dual situation. Let
$D(\mathbb{C}P^{2n})$ denote the $S$-dual of $\mathbb{C}P^{2n}$.
Aside from a dimensional shift,  $H^{*}(D(\mathbb{C}P^{2n}); \mathbb{Z}_{2})$,
as an $\mathcal{A}_{1}$-module,  is isomorphic to a sum of suspended copies of 
$H^{*}(\mathbb{C}P^{2}; \mathbb{Z}_{2})$. So, the Adams spectral sequence for
 $\pi_{*}(bo \wedge D(\mathbb{C}P^{2n}))$ collapses for dimensional reasons. The argument
 of Theorem \ref{thm:bu.and.bo} goes through essentially unchanged to give an equivalence of 
 spectra
 
\begin{equation}\label{eqn:reg.dual} 
g\colon \bigvee_{k = 1}^{n} \Sigma^{-4k}{bu}  \longrightarrow bo \wedge D(\mathbb{C}P^{2n}).
\end{equation}

\

\nd The next lemma, which follows directly from the discussion in Section 
\ref{sec:ko.homology}, records the fact that
(\ref{eqn:reg.dual}) is natural with respect to the inclusion 

$$CP^{2n} \stackrel{\subset}{\longrightarrow}  CP^{2(n+1)}.$$

\begin{lem}\label{lem:naturality}
The  following diagram commutes

\begin{equation}\label{cd:inclusions.diagram}
\begin{CD}
\bigvee\limits_{k = 1}^{n} \Sigma^{-4k}{bu} @>{g}>> 
bo \wedge D(\mathbb{C}P^{2n})\\
@AA{\phi}A                                          @A{\psi}AA \\
\bigvee\limits_{k = 1}^{n+1} \Sigma^{-4k}{bu} @>{g}>> bo \wedge D(\mathbb{C}P^{2(n+1)})\\\
\end{CD}
\end{equation}

\

\nd where the map $\phi$ collapses $\Sigma^{-4(n+1)}{bu}$ to a point and $\psi$ is
induced by the inclusion 
$$CP^{2n}  \stackrel{\subset}{\longrightarrow}  CP^{2(n+1)}.$$
\hfill  $\square$
\end{lem}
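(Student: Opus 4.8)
The plan is to exploit the fact that, exactly as in Theorem~\ref{thm:bu.and.bo}, the equivalence $g$ of (\ref{eqn:reg.dual}) is not an arbitrary map but a $bo$-module map assembled summand by summand from bottom-cell data. Writing $X_{n}=\bigvee_{k=1}^{n}\Sigma^{-4k-2}\mathbb{C}P^{2}$ and invoking Wood's equivalence $\Sigma^{2}bu\simeq bo\wedge\mathbb{C}P^{2}$ (so that $\Sigma^{-4k}bu\simeq bo\wedge\Sigma^{-4k-2}\mathbb{C}P^{2}$), one has $\bigvee_{k=1}^{n}\Sigma^{-4k}bu\simeq bo\wedge X_{n}$, and $g$ is the $bo$-linear extension $\mu\circ(1\wedge s'_{n})$ of a map $s'_{n}\colon X_{n}\to bo\wedge D(\mathbb{C}P^{2n})$, precisely as in (\ref{eqn:bo.smash}). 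Under these identifications $\phi$ becomes $1\wedge q$, where $q\colon X_{n+1}\to X_{n}$ collapses the bottom (most negative) summand $\Sigma^{-4(n+1)-2}\mathbb{C}P^{2}$, while $\psi=1\wedge D(i)$ is the smash of $bo$ with the $S$-dual of the inclusion $i\colon\mathbb{C}P^{2n}\hookrightarrow\mathbb{C}P^{2(n+1)}$.

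First I would reduce the square to its restriction to generators. Both composites $g\circ\phi$ and $\psi\circ g$ are $bo$-module maps out of the free module $bo\wedge X_{n+1}$, so they coincide if and only if they agree after precomposition with the unit inclusion $X_{n+1}\to bo\wedge X_{n+1}$; that is, it suffices to prove $s'_{n}\circ q=(1\wedge D(i))\circ s'_{n+1}$ as maps $X_{n+1}\to bo\wedge D(\mathbb{C}P^{2n})$, and this may be checked one wedge summand at a time.

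The identification of $\psi$ on homology is the crux. The inclusion $i$ induces the restriction $H^{*}(\mathbb{C}P^{2(n+1)};\mathbb{Z}_{2})\to H^{*}(\mathbb{C}P^{2n};\mathbb{Z}_{2})$, which kills exactly the top cells; dually, $D(i)$ is an isomorphism on all but the bottom $\mathcal{A}_1$-summand of $H^{*}(D(\mathbb{C}P^{2(n+1)});\mathbb{Z}_{2})$, the summand carrying the bottom cell being annihilated. Passing to the Adams $E_2$-terms, which by the discussion of Section~\ref{sec:ko.homology} are sums of shifted copies of $\mathrm{Ext}_{\mathcal{A}_1}(H^{*}(\mathbb{C}P^{2};\mathbb{Z}_{2}),\mathbb{Z}_{2})$ concentrated in even degree, both $\psi$ and $q$ act as the projection killing the $k=n+1$ summand and as an isomorphism on the remaining $n$ summands. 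Consequently, on each of the $n$ common summands both $s'_{n}\circ q$ and $(1\wedge D(i))\circ s'_{n+1}$ carry the filtration-zero bottom-cell generator to the same generator, while on the collapsed summand both composites vanish ($q$ kills it and $D(i)$ annihilates its image). Since a map out of the wedge is determined by its restriction to the summands, the two composites agree; consistency of the generating classes across $n$ is guaranteed by the fact that the $g$ are all restrictions of the single construction of Theorem~\ref{thm:bu.and.bo}.

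The main obstacle is to justify that these agreements of generators promote to an honest homotopy-commuting square rather than a mere equality of induced maps on $E_2$-pages. This rests on the same vanishing that made $g$ well defined in the first place: the $E_2$-terms are concentrated in even total degree, so the bottom-cell classes are detected uniquely in filtration zero and the extensions over the $\eta$-cells carry no indeterminacy. I would make this precise by checking that the relevant groups $[\Sigma^{-4k}bu,\,bo\wedge D(\mathbb{C}P^{2n})]$ inject onto filtration-zero $E_2$, so that naturality on $E_2$ lifts to the desired naturality of the maps themselves; this is exactly the content of the assertion that the lemma follows directly from the discussion in Section~\ref{sec:ko.homology}.
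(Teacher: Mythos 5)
Your structural moves are the right ones, and they flesh out what the paper leaves unsaid (its own ``proof'' is just the assertion that the lemma follows from the discussion in Section \ref{sec:ko.homology}): identifying $g$ as the $bo$-linear extension of a wedge of maps $\Sigma^{-4k-2}\mathbb{C}P^{2}\to bo\wedge D(\mathbb{C}P^{2n})$, reducing commutativity to a summand-by-summand check after restriction along the unit, and disposing of the collapsed summand. For that last summand, though, the honest justification is connectivity rather than ``$D(i)$ annihilates its image in homology'': writing $Y_n = bo\wedge D(\mathbb{C}P^{2n})$, one has $\pi_j(Y_n)=0$ for $j<-4n$, while $\Sigma^{-4(n+1)-2}\mathbb{C}P^{2}$ has cells only in dimensions $-4n-4$ and $-4n-2$, so the entire group $[\Sigma^{-4(n+1)-2}\mathbb{C}P^{2},\,Y_n]$ vanishes and the composite is null for free.

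The genuine gap is in your final paragraph, where the crux is settled by two claims that are false. From the cofibration $S^{-4k}\to\Sigma^{-4k-2}\mathbb{C}P^{2}\to S^{-4k+2}$ and the vanishing of the odd homotopy of $Y_n\simeq\bigvee_{j=1}^{n}\Sigma^{-4j}bu$, there is a short exact sequence $0\to\pi_{-4k+2}(Y_n)\to[\Sigma^{-4k-2}\mathbb{C}P^{2},Y_n]\to\pi_{-4k}(Y_n)\to 0$, and for $1\le k\le n$ both outer groups are free abelian of rank $n-k+1>0$. So the mapping group does not inject onto filtration-zero $E_2$, the bottom-cell generator is specified by the construction only modulo classes of positive filtration (e.g.\ $v^{2}$ times the generator of the next summand), and the extension over the $\eta$-cell has indeterminacy exactly $\pi_{-4k+2}(Y_n)\neq 0$. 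Consequently, if the equivalences (\ref{eqn:reg.dual}) are constructed independently for each $n$ --- which is all that the argument of Theorem \ref{thm:bu.and.bo} provides --- the two composites around the square agree only modulo positive Adams filtration, and for bad choices they genuinely differ by a nonzero homotopy class; your appeal to ``the $g$ are all restrictions of the single construction'' has no content, since the dual splittings are built separately for each $n$. What must be proved instead is that the $g$ \emph{can be chosen compatibly} for all $n$ simultaneously, and your setup almost delivers this: the maps $\psi_{*}$ are surjective on $\pi_{-4k}$ and $\pi_{-4k+2}$ (even-degree long exact sequence of the cofibration $\Sigma^{-4(n+1)}bu\to Y_{n+1}\to Y_n$) with kernels contained in positive filtration, so for fixed $k$ the sets of admissible summand maps form an inverse system of nonempty sets with surjective transition maps $c\mapsto\psi\circ c$, and any element of the nonempty inverse limit gives a coherent family making every square commute. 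Without this (or a weakened statement ``commutes modulo higher filtration'' together with a check that this suffices for Proposition \ref{prop:inverse.limit} and Theorem \ref{thm:main.theorem}), the proof is incomplete.
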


\

The duality result from \cite[Proposition 5.6]{jfa}, implies

\begin{equation}\label{eqn:smash.duality}
D(\bigwedge_{i = 1}^{m}\mathbb{C}P^{2n}) \; \simeq \; \bigwedge_{i = 1}^{m}D(\mathbb{C}P^{2n}).
\end{equation}

\nd From this follows an isomorphism of abelian groups, analogous to Proposition \ref{prop:ko.to.KO} 
for finite projective spaces,

\begin{equation}\label{eqn:homology.of.duals}
\bigoplus_{k=1}^{n} \widetilde{KU}_{* -4k}\big(D(\bigwedge_{i = 2}^{m}\mathbb{C}P^{2n})\big)\;
\longrightarrow \;
\widetilde{KO}_{*}\big(D(\bigwedge_{i = 1}^{m}\mathbb{C}P^{2n})\big)
\end{equation}

\

\nd and so an isomorphism 

\begin{equation}\label{eqn:smash.finite}
\bigoplus_{k=1}^{n} \widetilde{KU}^{*\;+\;4k}(\bigwedge_{i = 2}^{m}\mathbb{C}P^{2n})
\;\longrightarrow\;
\widetilde{KO}^{*}(\bigwedge_{i = 1}^{m}\mathbb{C}P^{2n}).
\end{equation}

\

The next result extends (\ref{eqn:smash.finite})  to  
$\bigwedge_{i = 1}^{m}\mathbb{C}P^{\infty}$.
 
\ 
 
\begin{prop}\label{prop:inverse.limit}
There are isomorphisms
$$\widetilde{KO}^{*}(\bigwedge_{i = 1}^{m}\mathbb{C}P^{\infty}) \quad \cong \quad
\lim_{\stackrel{\longleftarrow}{n}}{\widetilde{KO}^{*}(\bigwedge_{i = 1}^{m}\mathbb{C}P^{2n})}$$
\nd and 
$$\bigoplus_{k=1}^{\infty} \widetilde{KU}^{*+4k}(\bigwedge_{i = 2}^{m}\mathbb{C}P^{\infty}) 
\quad \cong \quad
\lim_{\stackrel{\longleftarrow}{n}}
\bigoplus_{k=1}^{n} \widetilde{KU}^{*+4k}(\bigwedge_{i = 2}^{m}\mathbb{C}P^{2n}).$$
\end{prop}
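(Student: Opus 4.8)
The plan is to realize both isomorphisms as consequences of the Milnor $\lim^{1}$ exact sequence, applied to the presentation of $\bigwedge_{i=1}^{m}\mathbb{C}P^{\infty}$ as a colimit of the finite smash products $\bigwedge_{i=1}^{m}\mathbb{C}P^{2n}$. Since $\mathbb{C}P^{\infty}=\operatorname{colim}_{n}\mathbb{C}P^{2n}$ and the even skeleta are cofinal, smashing $m$ copies exhibits $\bigwedge_{i=1}^{m}\mathbb{C}P^{\infty}$ as a colimit of finite complexes of finite type with inclusion-induced structure maps. For the representable theory $KO$ this yields a short exact sequence
\[
0\;\longrightarrow\;{\lim_{n}}^{1}\,\widetilde{KO}^{\,*-1}({\textstyle\bigwedge_{i=1}^{m}\mathbb{C}P^{2n}})\;\longrightarrow\;\widetilde{KO}^{\,*}({\textstyle\bigwedge_{i=1}^{m}\mathbb{C}P^{\infty}})\;\longrightarrow\;\lim_{n}\widetilde{KO}^{\,*}({\textstyle\bigwedge_{i=1}^{m}\mathbb{C}P^{2n}})\;\longrightarrow\;0,
\]
and an analogous sequence for each $\widetilde{KU}^{\,*+4k}(\bigwedge_{i=2}^{m}\mathbb{C}P^{2n})$. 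The proposition will follow once the relevant $\lim^{1}$ terms are shown to vanish and the surviving limits are identified.

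I would dispose of the $KU$ side first, since it is self-contained and yields the second isomorphism. By the Künneth theorem for complex $K$-theory, together with the fact that $KU^{*}(\mathbb{C}P^{2n})$ is finitely generated, free and concentrated in even degrees, each group $\widetilde{KU}^{\,*+4k}(\bigwedge_{i=2}^{m}\mathbb{C}P^{2n})$ is free and even. The cofibre of the inclusion $\mathbb{C}P^{2n}\subset\mathbb{C}P^{2(n+1)}$ has cells only in dimensions $4n+2$ and $4n+4$, so its reduced $KU$-cohomology is even and free and the long exact sequence of the pair forces the single-factor restriction to be surjective; by functoriality of the Künneth isomorphism the restriction on the $m$-fold smash is surjective as well. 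Hence each $KU$ tower is Mittag-Leffler, its $\lim^{1}$ vanishes, and $\lim_{n}\widetilde{KU}^{\,*+4k}(\bigwedge_{i=2}^{m}\mathbb{C}P^{2n})\cong\widetilde{KU}^{\,*+4k}(\bigwedge_{i=2}^{m}\mathbb{C}P^{\infty})$. Computing the inverse limit of the finite sums $\bigoplus_{k=1}^{n}$ slot by slot in $k$ — legitimate precisely because each slot is Mittag-Leffler — produces the second displayed isomorphism.

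To obtain the first isomorphism I would transport this to the $KO$ tower through the finite identification (\ref{eqn:smash.finite}) and its naturality. Lemma \ref{lem:naturality}, equivalently the cohomological dual of the diagram (\ref{cd:inclusions.diagram}), shows that under (\ref{eqn:smash.finite}) the restriction $\widetilde{KO}^{\,*}(\bigwedge_{i=1}^{m}\mathbb{C}P^{2(n+1)})\to\widetilde{KO}^{\,*}(\bigwedge_{i=1}^{m}\mathbb{C}P^{2n})$ corresponds to the map that collapses the top summand $k=n+1$ and restricts the remaining summands. Thus the $KO$ tower is isomorphic, as a pro-abelian group, to the $KU$ tower just treated; it is therefore Mittag-Leffler, its $\lim^{1}$ vanishes, and the Milnor sequence collapses to the first isomorphism.

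The main obstacle is the vanishing of $\lim^{1}$, that is, the Mittag-Leffler condition, which I reduce to the surjectivity of the restriction maps. This is transparent for $KU$, where every group is free and concentrated in even degrees, but it cannot be read off directly for $KO$, whose cohomology carries torsion and odd classes; the passage through (\ref{eqn:smash.finite}) and Lemma \ref{lem:naturality} is exactly what makes the surjectivity visible. The secondary, bookkeeping point is to match the $k$-indexed summands compatibly across the two towers, which is again supplied by the naturality recorded in Lemma \ref{lem:naturality}.
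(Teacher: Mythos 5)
Your proposal is correct and takes essentially the same route as the paper: its proof likewise reduces both isomorphisms to the Mittag-Leffler condition, obtaining surjectivity of the transition maps from the identification (\ref{eqn:smash.finite}) together with the naturality of Lemma \ref{lem:naturality} (the maps $\phi$ and $\psi$ of diagram (\ref{cd:inclusions.diagram})), so that the $\lim^{1}$ terms vanish and the Milnor sequences collapse. Your write-up simply supplies details the paper leaves implicit, namely the K\"unneth/cofibre verification that the $KU$ restrictions are onto and the explicit transport of surjectivity to the $KO$ tower.
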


\begin{proof} It follows from the calculations above
that the maps in the inverse limit arising from

$$\widetilde{KO}^{*}\big(\bigwedge_{i = 1}^{m}\mathbb{C}P^{2(n+1)}\big)
\longrightarrow \widetilde{KO}^{*}\big(\bigwedge_{i = 1}^{m}\mathbb{C}P^{2n}\big)$$ 

\

\nd and

$$\bigoplus_{k=1}^{n+1} \widetilde{KU}^{*+4k}(\bigwedge_{i = 2}^{m}\mathbb{C}P^{2(n+1)})
\longrightarrow
\bigoplus_{k=1}^{n} \widetilde{KU}^{*+4k}(\bigwedge_{i = 2}^{m}\mathbb{C}P^{2n})$$

\

\nd (induced from the maps $\psi$ and $\phi$  of diagram (\ref{cd:inclusions.diagram}))
are all surjective. Thus the  Mittag-Leffler condition is satisfied and the 
$\ds{\lim_{\stackrel{\longleftarrow}{n}}{ }\!^{1}}$ terms are zero. \end{proof}

Finally, Lemma \ref{lem:naturality} implies that the isomorphisms (\ref{eqn:smash.finite})
are compatible with the maps in the inverse limits and so yield the main result of this section.

\begin{thm}\label{thm:main.theorem} There is an isomorphism of graded abelian groups

$$\bigoplus_{k=1}^{\infty} \widetilde{KU}^{*\;+\;4k}(\bigwedge_{i = 2}^{m}\mathbb{C}P^{\infty})
\longrightarrow
\widetilde{KO}^{*}(\bigwedge_{i = 1}^{m}\mathbb{C}P^{\infty}).$$
\end{thm}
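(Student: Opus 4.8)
The plan is to obtain the stated isomorphism by assembling the finite-level isomorphisms (\ref{eqn:smash.finite}) into a single morphism of inverse systems over $n$, and then to pass to the inverse limit, identifying the two limits with the left- and right-hand sides of the theorem by means of Proposition \ref{prop:inverse.limit}. Concretely, I would regard (\ref{eqn:smash.finite}), one isomorphism for each $n$, as the levelwise components of a candidate map
$$\Big\{\bigoplus_{k=1}^{n} \widetilde{KU}^{*+4k}(\bigwedge_{i=2}^{m}\mathbb{C}P^{2n})\Big\}_{n} \;\longrightarrow\; \Big\{\widetilde{KO}^{*}(\bigwedge_{i=1}^{m}\mathbb{C}P^{2n})\Big\}_{n},$$
where the bonding maps on the right are induced by the inclusions $\mathbb{C}P^{2n}\subset\mathbb{C}P^{2(n+1)}$ and those on the left are the collapse-induced maps appearing in Proposition \ref{prop:inverse.limit}.

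The crux is to verify that the squares relating level $n+1$ to level $n$ commute, so that (\ref{eqn:smash.finite}) is genuinely a map of inverse systems and not merely a levelwise isomorphism of abstractly isomorphic groups. This is exactly where Lemma \ref{lem:naturality} is decisive: the isomorphism (\ref{eqn:smash.finite}) is built from the dual splitting (\ref{eqn:reg.dual}) together with the smash-duality equivalence (\ref{eqn:smash.duality}), and Lemma \ref{lem:naturality} records that (\ref{eqn:reg.dual}) intertwines the collapse map $\phi$ with the dual $\psi$ of the inclusion. I would smash that commuting square with the remaining $m-1$ dual factors, use that (\ref{eqn:smash.duality}) is natural with respect to the inclusion, and apply $\pi_{*}(bo\wedge-)$ to obtain the required commutativity in the homology-of-duals picture (\ref{eqn:homology.of.duals}). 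Since the $S$-duality identification $\widetilde{KO}^{*}(X)\cong\widetilde{KO}_{-*}(DX)$ is itself natural, the cohomological reformulation (\ref{eqn:smash.finite}) inherits the same compatibility, and all maps respect the internal grading, so the resulting morphism of inverse systems is a levelwise graded isomorphism.

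With this in hand I would conclude by applying $\lim_{n}$. By Proposition \ref{prop:inverse.limit} both inverse systems satisfy the Mittag--Leffler condition, so their derived limits vanish; consequently $\lim_{n}$ of a levelwise isomorphism between them is again an isomorphism, and the two limits are precisely the source and target of the theorem by the same proposition. The main obstacle I anticipate is purely the bookkeeping of the duality: one must be certain that the finite-level isomorphism constructed from $S$-duality is compatible with the inclusion maps rather than only existing separately at each stage. Everything reduces to the single commuting square of Lemma \ref{lem:naturality}, so the argument amounts to checking that smashing with further factors and dualizing both preserve that one square.
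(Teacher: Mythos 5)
Your proposal is correct and follows essentially the same route as the paper: Lemma \ref{lem:naturality} supplies the compatibility of the finite-level isomorphisms (\ref{eqn:smash.finite}) with the bonding maps, and Proposition \ref{prop:inverse.limit} identifies the two inverse limits (with the Mittag--Leffler condition killing the $\lim^1$ terms), so passing to the limit gives the theorem. The only slight inaccuracy is that the vanishing of derived limits is needed in Proposition \ref{prop:inverse.limit} itself (to identify $\widetilde{KO}^{*}$ of the infinite smash with the limit via the Milnor sequence), not to conclude that $\lim_n$ of a levelwise isomorphism of inverse systems is an isomorphism, which holds by functoriality alone.
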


\

\section{The algebra $KO^{*}(BT^{m})$}\label{sec:ring.structure}
This section contains the first of two descriptions of the the algebra $KO^{*}(BT^{m})$. It
extends the calculation done in \cite{dobson} for the case $m=2$. 

\subsection{Notation and statement of results} Here, as in Section
\ref{sec:introduction},

$$ BT^m \;\cong\; \prod_{i=1}^{m}{\mathbb{C}P^{\infty}}$$

\

The two sets of generators presented for $KO^{*}(BT^{m})$ have contrasting advantages.
The first description yields generators which are slightly complicated but
the relations among them are fairly straightforward. This situation is reversed in
the second description.

\

\nd The complexification and realification maps, (\ref{eqn:c}) and (\ref{eqn:r}),
are denoted again by $c$ and $r$ respectively.

\

Let $\alpha \in KO^{-4}$ and $\beta \in KO^{-8}$ be
the elements arising from (\ref{eqn:ko.gens}), for which  $\alpha^2 = 4\beta$. Let $v \in KU^{-2}$
be the Bott element, which satisfies $r(v^2) = \alpha$ and $c(\alpha) = 2v^2$.
The generators of $KU^{0}(BT^m)$ are denoted by $x_i$ for
$i = 1, \ldots , m$\; so that $KU^{0}(BT^m) \; \cong \; 
\mathbb{Z}[\mspace{-2mu}[ x_1,\ldots,x_m]\mspace{-2mu}]$.

\

\nd More notation is established next.

\begin{defin}\label{defin:don.generator.notation}

Consider  the set $N = \{1, \ldots,m\}$ and let $S \subseteq N$.
 Then

\begin{enumerate}
  \item set $\mathrm{min}(S) = \mathrm{min}\{i : i \in S\}$,
  \item let $|S|$ denote the cardinality of $S$,
  \item for $s \in \{0,1,2\}$, let $X_{S}^{(s)} = r(v^{s}
  \ds{\prod\limits_{i \in S}} x_i) \in KO^{-2s}(BT^m)$,
  \item let $X_{S} = X_{S}^{(0)}$ and $X_i = X_{\{i\}} = r(x_{i})$,
  \item for $s \in \{0,1\}$, let $X_{\phi}^{(s)} = 1 + (-1)^s$,
  \item for $s \in \{0,1\}$, let $M_{S}^{(s)} = X_{S}^{(s)}\cdot\!\!  
  \ds{\prod\limits_{i \in N \setminus S}}{X_{i}}$ and $M_S = M_{S}^{(0)}$.
  \item $\widehat{BT}^m =  \bigwedge_{i=1}^{m} \mathbb{C}P^{\infty}$ and
  \item  $\overline{{\mathbb{Z}}}[\mspace{-2mu} [-]\mspace{-2.1mu}]$ denotes the 
  augmentation  ideal of a   power series ring.
\end{enumerate}
\end{defin}
 
 \

\begin{thm} \label{thm:ko.prod.proj.basis.1}
There is an isomorphism of graded rings
$$KO^{*}(BT^m) \;\cong\; {\mathbb{Z}}[\gamma^{\pm 1}] \otimes
\overline{{Z}}
[\mspace{-2mu} [X_{S}, X_{S}^{(1)} : \phi \neq S \subseteq N ]\mspace{-2.1mu}]\big/\!\sim$$
where $\gamma$ is an element with $|\gamma| = -4$ satisfying 
$2\gamma = \alpha$ and $\gamma^{2} = \beta$. Here $\sim$ refers to the
two families of relations {\rm(I)} and {\rm (II)} below.
\begin{itemize}
\item[(I)] If $A$, $B$ and $C$ are disjoint subsets of $N$ and $0 \leq s, t \leq 1$, 
then
$$X_{A\cup B}^{(s)}X_{A\cup C}^{(t)} = \prod_{i \in A}X_{i}\cdot
\Biggl[\sum_{T \subseteq A}{X_{T\cup B \cup C}^{(s+t)}} +
 (-1)^{s + |A \cup B|} \sum_{S \subseteq B}{(-1)^{|S|}\biggl{(}\prod_{i \in S} X_{i}\biggr{)}
 X_{C \cup B \setminus S}^{(s+t)}}\Biggr].$$
\nd Here $B$, $C$, $S$,  $T$ may be empty, $X_{S}^{(2)} = \gamma X_{S}$
and products over empty sets are considered equal to $1$.
\item[]
\item[(II)] For $ i < {\mathrm {min}}(S)$, $|S| >1$ and $s \in \{0,1\}$,
$$X_{i}X_{S}^{(s)} = (-1)^{s}\sum_{T \subseteq S}
{\Biggl[(-1)^{|T|} \biggl(\prod_{j \in S \setminus T}X_{j}\biggr) \cdot X_{\{i\} \cup T}^{(s)}\Biggr]} + X_{\{i\} \cup S}^{(s)}\;.$$
\nd Again, $T$ may be empty.
\end{itemize}
\end{thm}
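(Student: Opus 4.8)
The plan is to transport the whole computation along the complexification map into the commutative ring $KU^{*}(BT^m)=\mathbb{Z}[v^{\pm1}][\![x_1,\dots,x_m]\!]$, where everything is explicit. Since $c$ is an injective ring homomorphism, $r$ is a surjective $KO^{*}$-module map, and $(c\circ r)(z)=z+\overline z$, the image $c(KO^{*}(BT^m))$ equals the subring $\{z+\overline z:z\in KU^{*}(BT^m)\}$ (it is closed under multiplication because $(z+\overline z)(w+\overline w)=(zw+\overline{zw})+(z\overline w+\overline{z\overline w})$), and $c$ is a ring isomorphism onto it. I would first record the formulas on which everything rests: writing $p_S=\prod_{i\in S}x_i$, one has $\overline v=-v$, $\overline{x}_i=(1+x_i)^{-1}-1$, $c(X_S^{(s)})=v^{s}\bigl(p_S+(-1)^{s}\overline{p}_S\bigr)$, and the fact that $c$ carries multiplication by $\gamma$ to multiplication by $v^{2}$ (so that $X_S^{(2)}=\gamma X_S$), which is just $c(\alpha)=2v^2$ together with $\alpha=2\gamma$. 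Above all I would isolate the \emph{master identity} $x_i\overline{x}_i=-(x_i+\overline{x}_i)$, i.e. $x_i+\overline{x}_i=-x_i\overline{x}_i=-c(X_i)$, which is the engine behind both relations.

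Next I would prove generation. Because $r$ is surjective and $r\bigl(c(\gamma^{j})\,v^{s}f\bigr)=\gamma^{j}r(v^{s}f)$, every class is a $\mathbb{Z}[\gamma^{\pm1}]$-combination of the classes $r(v^{s}m)$ with $s\in\{0,1\}$ and $m$ a monomial in the $x_i$. Applying $c$ turns $r(v^s m)$ into $v^{s}\bigl(m+(-1)^{s}\overline m\bigr)$, and an induction on the degree of $m$ shows every such class lies in the subring generated by the $X_S^{(s)}$ and $\gamma^{\pm1}$: for instance $c(X_{\{i\}}^{(s)})c(X_{\{i\}}^{(t)})$ expands, by the master identity, as $c\bigl(r(v^{s+t}x_i^{2})\bigr)$ plus terms of strictly lower monomial degree that are already generated, and similarly for monomials with repeated factors. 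This produces a surjection $\Phi\colon R\to KO^{*}(BT^m)$ from the presented ring $R$.

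To see that relations (I) and (II) hold I would apply the injective ring map $c$ and verify the resulting identities in $KU^{*}(BT^m)$. For (II), computing $c(X_i)c(X_S^{(s)})$ with $i\notin S$ isolates the two terms $v^{s}\bigl(p_{\{i\}\cup S}+(-1)^{s}\overline{p}_{\{i\}\cup S}\bigr)=c(X_{\{i\}\cup S}^{(s)})$, matching the last summand, while the remaining cross terms are matched to the sum over $T\subseteq S$ by the telescoping identities $\sum_{T\subseteq S}(-1)^{|T|}\prod_{j\in S\setminus T}(x_j+\overline{x}_j)\,p_T=\prod_{j\in S}\bigl((x_j+\overline{x}_j)-x_j\bigr)=\overline{p}_S$ and, with $\overline{p}_T$ in place of $p_T$, $=p_S$. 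Relation (I) is verified the same way: expanding $c(X_{A\cup B}^{(s)})c(X_{A\cup C}^{(t)})$ produces the factor $p_A^{2}$ (and its conjugates), which the master identity rewrites as $\prod_{i\in A}X_i$ times a sum over $T\subseteq A$, while the $B$--$C$ cross terms telescope over $S\subseteq B$ exactly as displayed. Injectivity of $c$ then yields the relations in $KO^{*}(BT^m)$.

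The essential remaining point, and the main obstacle, is \emph{completeness}: that (I) and (II) generate all relations, i.e. that $\Phi$ is injective. I would treat the two relations as a rewriting system, using (II) (whose hypothesis $i<\mathrm{min}(S)$ is an ordering condition) to sort indices and (I) to combine two factors, reducing an arbitrary product of generators to a normal-form spanning set of $R$. To prove these normal forms are independent I would pass back to $KU^{*}(BT^m)$ and filter by monomial degree in the $x_i$: the leading term of $\gamma^{j}\prod_a X_{S_a}^{(s_a)}$ is $v^{\,2j+\sum_a s_a}\prod_a p_{S_a}$, and the reduction can be arranged so that distinct normal forms have distinct leading monomials. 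Finally I would match the resulting index set, degree by degree, with the additive basis of $KO^{*}(BT^m)$ supplied by the stable splitting $\Sigma^{\infty}BT^m_{+}\simeq\bigvee_{S\subseteq N}\widehat{BT}^{|S|}$ together with Theorem \ref{thm:main.theorem}; since $KO^{*}(BT^m)$ is torsion-free and concentrated in even degrees, equality of graded ranks combined with independence of leading terms forces $\Phi$ to be an isomorphism. I expect the genuine work to be combinatorial: showing that repeated application of (I) and (II) terminates at a unique normal form, and that the leading monomials biject with the chosen additive basis.
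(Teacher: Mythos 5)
Your overall architecture is the same as the paper's: you verify relations (I) and (II) by pushing them through the injective complexification $c$ and computing in $KU^{*}(BT^m)$ (the paper organizes the identical computation via the substitution $z_i=\sqrt{1+x_i}$; your ``master identity'' $x_i\overline{x}_i=-(x_i+\overline{x}_i)$ is an equivalent bookkeeping device, and your telescoping sums are the paper's binomial identities), and you refer completeness to the additive structure of Theorem \ref{thm:main.theorem} together with the stable splitting of $\Sigma(BT^m)$ into smash summands, which is exactly how the paper deduces Theorem \ref{thm:ko.prod.proj.basis.1} from Theorem \ref{thm:ko.of.smash}. Up to that point your proposal is sound.

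The genuine gap is in your independence step: the leading-term claim is false, and it fails precisely because of the conjugation cancellation that makes this computation hard. Since $\overline{x}_i=-x_i+x_i^2-\cdots$, one has
\begin{equation*}
c(X_S^{(s)})\;=\;v^{s}\bigl(p_S+(-1)^{s}\overline{p}_S\bigr)\;=\;v^{s}p_S\Bigl(1+(-1)^{s+|S|}\prod_{i\in S}(1+x_i)^{-1}\Bigr),
\end{equation*}
so the naive leading term $v^{s}p_S$ cancels whenever $s+|S|$ is odd; for example $c(X_i)=x_i^2-x_i^3+\cdots$, not $x_i+\cdots$. The true leading term is $2v^{s}p_S$ when $s+|S|$ is even, and $v^{s}p_S\sum_{i\in S}x_i$ (one degree higher, and a \emph{sum} of monomials) when $s+|S|$ is odd. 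As a result, distinct normal forms need not have distinct leading monomials: already for $m=2$ the leading polynomial of $c(X_{\{1,2\}}^{(1)})$ is $x_1^2x_2+x_1x_2^2$ (up to $v$), whose second monomial coincides with the leading monomial $2x_1x_2^2$ of $c(X_2X_1^{(1)})$ — both are normal forms in $KO^{-2}(BT^2)$ by Example \ref{exm:corrected.don.dobson.basis}. Moreover the factors of $2$ that appear make integral independence a genuinely $2$-primary question that a monomial-ordering argument cannot settle by itself. This is exactly what the paper's proof is built to handle: instead of $x$-adic leading terms it uses Adams filtration (realification raises filtration by $1$, $v$ has filtration $1$, $\gamma$ filtration $2$), so that Lemma \ref{lem:comparison} identifies each proposed generator $X_1^{e_1}\cdots X_m^{e_m}\gamma^{j}M_S^{(s)}$ with a specific basis element of $\bigoplus_{k}\widetilde{KU}^{*+4k}(\bigwedge_{i\ge 2}\mathbb{C}P^{\infty})$ \emph{modulo strictly higher filtration}, after which Theorem \ref{thm:main.theorem} yields spanning and linear independence simultaneously. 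To repair your argument you would have to replace ``distinct leading monomials'' by a filtration in which this factor-of-$2$ bookkeeping is systematic — in effect, re-deriving the paper's Adams filtration comparison.
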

 
 \
  
\begin{remark}
The element $\gamma$ is introduced here for notational convenience. It arises naturally
in the Adams spectral sequence and has the property that $\gamma{r(x)} = r(v^2x)$ for
all $x \in KU^{0}(BT^m)$. The use of $\gamma$ may be removed in Theorem
\ref{thm:ko.prod.proj.basis.1} and in Corollary \ref{cor:ko.prod.proj.basis.1} (below), by 
allowing the choice of the exponent $s$ in Definition \ref{defin:don.generator.notation} to be unrestricted.
\end{remark}  
  
\

Relations (I) allow the elimination of all products $X_{U}^{(u)}X_{V}^{(v)}$
with $|U|$ and $|V|$ both greater than $1$, reducing everything to products of
$X_{i}$'s times at most one $X_{W}^{(w)}$ with $|W| > 1$. Relations (II) allow the 
elimination of $X_{i}X_{W}^{(w)}$ with $|W| > 1$ and $i < {\mathrm {min}}(W)$. 
Notice that for a product 
$X_{i_{1}}X_{i_{2}}\cdot \ldots \cdot X_{i_{k}}X_{W}^{(w)}$, (II)
need  be performed once only for just one $X_{i_{j}}$ with minimal $i_{j}$.

\

The next corollary is now immediate.

\begin{cor} \label{cor:ko.prod.proj.basis.1}
Every element of $KO^*(BT^m)$ can be expressed as a formal sum of terms
from
$$\mathcal{G}_1 = \bigg\{\gamma^{j}\Bigl(\prod_{i \;= \;{\mathrm{min}}(S)}^{m} X_{i}^{e_{i}}
\Bigr)X_{S}^{(s)}: S \subseteq N, \;S \neq \phi, \;e_i \geq 0, \;j \in \mathbb{Z}\; 
\text{and} \; s \in \{0,1\}\bigg\}.$$
\end{cor}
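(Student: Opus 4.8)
The plan is to treat the two families of relations (I) and (II) in Theorem~\ref{thm:ko.prod.proj.basis.1} as a terminating rewriting procedure and to read off $\mathcal{G}_1$ as the set of resulting normal forms. By that theorem, every element of $KO^{*}(BT^{m})$ is a formal $\mathbb{Z}[\gamma^{\pm1}]$-linear combination of monomials $\prod_{j}X_{U_{j}}^{(u_{j})}$ in the generators $X_{S}$, $X_{S}^{(1)}$; since the relations are applied termwise, it suffices to rewrite a single such monomial as a formal sum of elements of $\mathcal{G}_1$. Throughout I keep every exponent $u_{j}$ in $\{0,1\}$, using $X_{W}^{(2)}=\gamma X_{W}$ to absorb any accumulated exponent $2$ into a power of $\gamma$. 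Call a factor $X_{U}^{(u)}$ \emph{large} when $|U|>1$, and measure a monomial by its number of large factors.

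First I would drive the number of large factors down to at most one using (I). Given two large factors $X_{U}^{(u)}X_{V}^{(v)}$, put $A=U\cap V$, $B=U\setminus V$, $C=V\setminus U$, so that $U=A\cup B$ and $V=A\cup C$ with $A,B,C$ pairwise disjoint, and apply relation (I). Its right-hand side is $\prod_{i\in A}X_{i}$ times a $\mathbb{Z}$-linear combination of terms, each of the form (a product of linear factors $X_{i}$) times a \emph{single} factor $X_{W}^{(u+v)}$, with $W=T\cup B\cup C$ or $W=C\cup(B\setminus S)$. Replacing $X_{W}^{(2)}$ by $\gamma X_{W}$ when needed, each such term has one large factor fewer than the monomial we started from. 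Finitely many applications therefore express the original element as a formal sum of monomials $\gamma^{j}\bigl(\prod_{i}X_{i}^{e_{i}}\bigr)X_{W}^{(s)}$ carrying a single large factor $X_{W}^{(s)}$, $s\in\{0,1\}$ (the degenerate possibilities $X_{\phi}^{(s)}=1+(-1)^{s}$ and the singleton $X_{\{i\}}^{(0)}=X_{i}$ simply merge with the linear part).

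It remains only to force every loose linear factor to have index at least $\min(W)$. If some loose $X_{i}$ has $i<\min(W)$, I choose the smallest index $i$ occurring among the loose factors and apply relation (II) to $X_{i}X_{W}^{(s)}$. On its right-hand side the large factor of each term is $X_{\{i\}\cup T}^{(s)}$ with $T\subseteq W$ (together with the single term $X_{\{i\}\cup W}^{(s)}$), whose minimal index is exactly $i$ because $i<\min(W)\le\min(T)$; the factors $X_{j}$ released from $W\setminus T$ satisfy $j>i$. As $i$ was chosen globally minimal among the loose indices, after this one substitution every loose factor of every resulting term already has index $\ge i=\min$ of its large factor. This is precisely the remark, made just before the statement, that (II) need be performed only once, for the minimal $i_{j}$. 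The monomials so obtained are exactly the elements of $\mathcal{G}_1$, which proves the corollary.

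The substance of the argument, and the only place I expect to need care, is the well-foundedness of the procedure: one must confirm that relation (I) genuinely removes a large factor in \emph{every} resulting term (the disjoint decomposition $A,B,C$ is what guarantees this) and that the single application of (II) to the globally minimal loose index cannot reintroduce a loose factor lying below the new minimum. Granting these two points, together with the routine checks that the scalar factors $X_{\phi}^{(s)}$ and the singleton factors $X_{\{i\}}^{(s)}$ lie in the span of $\mathcal{G}_1$, the rewriting terminates and the spanning statement is immediate, exactly as asserted.
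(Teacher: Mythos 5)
Your proposal is correct and is essentially the paper's own argument: the corollary is derived from Theorem \ref{thm:ko.prod.proj.basis.1} by precisely this rewriting procedure, using relations (I) to reduce any monomial to products of $X_i$'s times at most one factor $X_{W}^{(w)}$ with $|W|>1$, followed by a single application of (II) for the minimal loose index, exactly as the remark preceding the corollary indicates. Your explicit decomposition $A = U\cap V$, $B = U\setminus V$, $C = V\setminus U$, the absorption of $X_{W}^{(2)} = \gamma X_{W}$, and the handling of the degenerate factors $X_{\phi}^{(s)}$ merely spell out the details the paper declares ``immediate.''
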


\

The example below follows easily from Theorem \ref{thm:ko.prod.proj.basis.1}.

\begin{exm} \label{exm:corrected.don.dobson.basis}
For $s \in \{0,1\}$, $KO^{-(4j+2s)}(BT^2)$ has a basis
$$\mathcal{G}_1 = \left\{\gamma^{j}X_{2}^{e_2}X_{2}^{(s)},\;
\gamma^{j}X_{1}^{e_1}X_{2}^{e_2}X_{1}^{(s)},\;
\gamma^{j}X_{1}^{e_1}X_{2}^{e_2}X_{\{1,2\}}^{(s)} : e_1, e_2 \geq 0\right\}.$$

\nd The following relations determine all products among these basis elements.
Here $s \in\{0,1\}$ and $i \in \{1,2\}$. Recall $X_{S}^{(2)} = \gamma X_{S}$.

$$\ds{\begin{array}{lll}
X_{\{1,2\}}X_{\{1,2\}} & = & X_1X_2(X_{\{1,2\}} + X_1 + X_2  + 4)\\
\\
X_{\{1,2\}}^{(s)}X_{\{1,2\}}^{(1)} & = & X_1X_2(X_{\{1,2\}}^{(s+1)} + X_{1}^{(s+1)}
+ X_{2}^{(s+1)})\\
\\
X_{i}^{(1)}X_{i}^{(1)} & = & \gamma(X_{i}^2 + 4X_i)\\
\\
X_{1}^{(s)}X_{2}^{(1)} & = & 2X_{\{1,2\}}^{(s+1)} - X_{2}X_{1}^{(s+1)}\\
\\
X_{1}^{(1)}X_{\{1,2\}}^{(1)} & = & \gamma X_1(2X_2 + X_{\{1,2\}}).
\end{array}}$$
\end{exm}

\

\nd The first two relations are of type (I); the last three are of type (II).
\begin{rem}\label{rem:corrected.dobson.remark} The case $m = 2$ is done in
\cite{dobson}. The example above agrees with Proposition 8.2.20 in \cite{dobson} after
certain typographical errors are corrected. (These include replacing all the equal
signs with minus signs and correcting the formula for ``$w_{2i}w_{2j}$'' so that 
it is consistent with Lemma 8.2.8 in the same document.)
\end{rem}

\

A closely-related result gives $KO^{*}(\widehat{BT}^m)$.

\

\begin{thm} \label{thm:ko.of.smash}
$\widetilde{KO}^{-(4j+2s)}(\widehat{BT}^m)$
is a free module over
${\mathbb Z}[\mspace{-2mu} [X_1, \ldots , X_{m}]\!]$ on
$$\left\{ \gamma^{j}M_{S}^{(s)} : 1 \in S \subseteq N \right\} $$
The product $M_{S_{1}}^{(s)}M_{S_{2}}^{(t)}$ can be computed in
terms of this basis from the relations $\mathrm{(I)}$ and $\mathrm{(II)}$
of Theorem \ref{thm:ko.prod.proj.basis.1}.
\end{thm}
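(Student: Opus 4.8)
The plan is to realize $\widetilde{KO}^{*}(\widehat{BT}^m)$ as the top summand of a natural direct-sum decomposition of $\widetilde{KO}^{*}(BT^m)$ and then to read off its basis from Corollary \ref{cor:ko.prod.proj.basis.1}. First I would invoke the stable splitting of $(\prod_{i=1}^{m}\mathbb{C}P^{\infty})_{+}$ from \cite{bbcg}, which yields a natural isomorphism
$$\widetilde{KO}^{*}(BT^m)\;\cong\;\bigoplus_{\phi\neq S\subseteq N}\widetilde{KO}^{*}\Bigl(\bigwedge_{i\in S}\mathbb{C}P^{\infty}\Bigr),$$
the summand indexed by $S=N$ being exactly $\widetilde{KO}^{*}(\widehat{BT}^m)$, and likewise for $KU$. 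Since complexification $c$ of \eqref{eqn:c} is a natural ring homomorphism it respects this decomposition, and on the $KU$ side the $S$-summand consists of the power series in the variables $\{x_i:i\in S\}$ that are divisible by $\prod_{i\in S}x_i$; in particular the top ($S=N$) $KU$-summand is the principal ideal $(x_1\cdots x_m)$. Because $c$ is injective by the Bott sequence \eqref{eqn:bott.seq}, membership of an element of $\widetilde{KO}^{*}(BT^m)$ in a given summand is detected by the support of its complexification.

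Next I would complexify the generators. Using $c(r(y))=y+\overline{y}$ from \eqref{eqn:r.and.c}, one finds that $c(X_i)=x_i+\overline{x_i}$ is a power series in $x_i$ alone with no constant term, while $c(X_S^{(s)})=v^{s}\prod_{i\in S}x_i+\overline{v^{s}\prod_{i\in S}x_i}$ involves precisely the variables $\{x_i:i\in S\}$ and is divisible by $\prod_{i\in S}x_i$; also $c(\gamma)=v^2$. Consequently each basis element $\gamma^{j}\bigl(\prod_{i=\min(S)}^{m}X_i^{e_i}\bigr)X_S^{(s)}$ of Corollary \ref{cor:ko.prod.proj.basis.1} has complexification supported on the single subset $T=S\cup\{i:e_i\geq 1\}$, hence lies in the $T$-summand. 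Such an element lies in the top summand iff $T=N$; since no variable $X_i$ with $i<\min(S)$ occurs in the monomial factor, this forces $\min(S)=1$ (equivalently $1\in S$) together with $e_i\geq 1$ for every $i\notin S$. Writing $e_i=f_i$ for $i\in S$ and $e_i=f_i+1$ for $i\notin S$ turns such an element into $\gamma^{j}\bigl(\prod_{i=1}^{m}X_i^{f_i}\bigr)M_S^{(s)}$, that is, the asserted generator $\gamma^{j}M_S^{(s)}$ times a monomial in the $X_i$. Because $\mathcal{G}_1$ is a basis rather than merely a spanning set (its size in each degree is fixed by the group computation underlying Theorem \ref{thm:main.theorem}) and each of its elements is supported on a single summand, the elements of $\mathcal{G}_1$ landing in the top summand form a basis of it, giving freeness over $\mathbb{Z}[\mspace{-2mu}[X_1,\dots,X_m]\mspace{-2mu}]$ on $\{\gamma^{j}M_S^{(s)}:1\in S\}$ in degree $-(4j+2s)$.

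Finally, the product assertion is almost formal: a product of elements supported on $A$ and $B$ is supported on $A\cup B$, so the top summand is an ideal for the ring multiplication, whence $M_{S_{1}}^{(s)}M_{S_{2}}^{(t)}$ again lies in $\widetilde{KO}^{*}(\widehat{BT}^m)$ and is expressible in the new basis; since every product in $KO^{*}(BT^m)$ is evaluated by relations $\mathrm{(I)}$ and $\mathrm{(II)}$ of Theorem \ref{thm:ko.prod.proj.basis.1}, so is this one. The main obstacle I anticipate is the second step: checking carefully that complexification sends each generator into a \emph{single} support-summand, and that the restriction $i\geq\min(S)$ in the monomial factor of Corollary \ref{cor:ko.prod.proj.basis.1} is exactly what excludes all generators except those with $1\in S$ from the top summand. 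This turns on the clean multiplicative behaviour of supports under $c$ and on knowing that $\mathcal{G}_1$ is genuinely a basis.
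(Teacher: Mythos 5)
Your support analysis under complexification is correct, and in fact it is the same bookkeeping the paper uses --- but run in the opposite direction, and that is exactly where the argument breaks: it is circular relative to what is actually available. You take as input Corollary \ref{cor:ko.prod.proj.basis.1} (that $\mathcal{G}_1$ spans $KO^*(BT^m)$) and then strengthen it to the assertion that $\mathcal{G}_1$ is a \emph{basis}. Neither fact can be invoked here. Corollary \ref{cor:ko.prod.proj.basis.1} is immediate only from the presentation in Theorem \ref{thm:ko.prod.proj.basis.1}, and in the paper that presentation is \emph{derived from} Theorem \ref{thm:ko.of.smash} via the stable splitting \eqref{eqn:split.product}: one applies Theorem \ref{thm:ko.of.smash} to each wedge summand $\bigwedge_{i\in S}\mathbb{C}P^{\infty}$ and assembles the resulting bases into $\mathcal{G}_1$. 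What is proven \emph{before} Theorem \ref{thm:ko.of.smash} is only that the relations (I) and (II) hold (by complexifying), not that the $X_S^{(s)}$ topologically generate, nor that the relation set is complete. So the statement you feed into your argument is, up to repackaging product versus smash, the statement you are trying to prove.

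The missing idea is the paper's substitute for your appeal to ``$\mathcal{G}_1$ is a basis'': an Adams spectral sequence filtration comparison. In Lemma \ref{lem:comparison}, using that realification raises Adams filtration by one, that $v$ has filtration one and $\gamma$ filtration two, each proposed generator $X_1^{e_1}\cdots X_m^{e_m}\gamma^{j}M_S^{(s)}$ is matched, modulo higher filtration, with a specific element of the additive basis of $\widetilde{KO}^{-(4j+2s)}(\widehat{BT}^m)$ supplied by Theorem \ref{thm:main.theorem} in the form \eqref{eqn:additive.structure}. This one-to-one matching against an independently known basis yields spanning and linear independence simultaneously, which is precisely what your proposal lacks. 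Your fallback counting argument (``its size in each degree is fixed by the group computation underlying Theorem \ref{thm:main.theorem}'') does not repair this: Theorem \ref{thm:main.theorem} computes only the smash factor, the additive structure of all of $KO^*(BT^m)$ is again obtained only by the splitting argument you are trying to reverse, and in any case matching ``sizes'' of infinitely generated modules over a power series ring is not a proof of linear independence without a filtration argument --- at which point you have reconstructed Lemma \ref{lem:comparison} and hence the paper's proof. Your final observation that the top summand is an ideal, so that products $M_{S_1}^{(s)}M_{S_2}^{(t)}$ stay in it and are evaluated by (I) and (II), is fine and is the easy part of the theorem.
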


\

Relations (I) and (II) of Theorem \ref{thm:ko.prod.proj.basis.1} are proved next.
This is followed by an identification of the terms given by Theorem 
\ref{thm:ko.of.smash} with those appearing in Theorem \ref{thm:main.theorem}.
Finally, Theorem \ref{thm:ko.prod.proj.basis.1} is derived from 
Theorem \ref{thm:ko.of.smash}.

\

\subsection{The proof of relations (I) and (II)}
The complexification map $c$ is injective and so it suffices to prove
relations (I) and (II) after $c$ is applied. For convenience, the relations will be 
verified in the ring $KU^{*}(BT^m)$ with the classes 
$\{z_i = \sqrt{1 + x_i} : i = 1, \ldots , m\}$ adjoined.

\

\begin{align*} 
c(X_{S}^{(s)}) & = v^s\prod_{i \in S}{x_i} + \overline{v^s}
\prod_{i \in S}{\overline{x}_{i}}\\ 
\\
& = v^s(\prod_{i \in S}{x_i})\Bigl(1 + (-1)^{s + |S|}
\prod_{i \in S}{\frac{1}{1 + x_i}\Bigr)} \\
\\
& =  v^s\prod_{i \in S}{\Bigl(\frac{x_i}{\sqrt{1 + x_i}}\Bigr)}\cdot
\Biggl(\prod_{i \in S}{\sqrt{1 + x_i}} + (-1)^{s + |S|}\prod_{i \in S}
{\frac{1}{\sqrt{1 + x_i}}}\Biggr)\\
\\
& =  v^s\prod_{i \in S}{\Bigl(\frac{z_{i}^2 -1}{z_i}\Bigr)}\cdot
\Biggl(\prod_{i \in S}{z_i} + (-1)^{s + |S|}\prod_{i \in S}
{\frac{1}{z_i}}\Biggr)\\
\\
& =  v^s\prod_{i \in S}{\Bigl(z_i - \frac{1}{z_i}\Bigr)}\cdot
\Biggl(\prod_{i \in S}{z_i} + (-1)^{s + |S|}\prod_{i \in S}
{\frac{1}{z_i}}\Biggr).\\
\end{align*} 







\

More notation is introduced next.
\begin{defin}\label{defin:don.relations.notation}

Let $A$, $S$, and $T$ be disjoint subsets of $N$.
 Then

\begin{enumerate}
  \item Let 
  $$w_{A,S,T}^{(s)} \;\; := \; \; \frac{(\prod\limits_{j \in A}z_{j}^{2})(\prod\limits_{j \in S}z_j)}
   {\prod\limits_{j \in T}z_j} + (-1)^{s +|S \cup T|}
   \frac{\prod\limits_{j \in T}z_{j}}{(\prod\limits_{j \in A}{z_j}^{2})(\prod\limits_{j \in S} z_j)}$$
   \item[]
   \item for $A = \phi$, set $w_{S,T}^{(s)} = w_{\phi,S,T}^{(s)}$ and 
   for $T = \phi$, $w_{S}^{(s)} = w_{S,\phi}^{(s)}$ 
   \item[]
   \item set $w_{i} = w_{\{i\}}^{(0)} = z_{i} - \ds{\frac{1}{z_i}}$.
 \end{enumerate}
\end{defin}

\nd Notice that in this new notation, the calculation above is 

$$c(X_{S}^{(s)}) = v^s\bigl(\prod_{i \in S}{w_i}\bigr)w_{S}^{(s)}.$$

\

Recall that If $A$, $B$ and $C$ are disjoint subsets of $N$ and $0 \leq s, t \leq 1$, 
relation (I) is

\begin{equation}\label{equation:relation.(I)}
X_{A\cup B}^{(s)}X_{A\cup C}^{(t)} = \prod_{i \in A}X_{i}\cdot
\Biggl[\sum_{T \subseteq A}{X_{T\cup B \cup C}^{(s+t)}} +
 (-1)^{s + |A \cup B|} \sum_{S \subseteq B}{(-1)^{|S|}\biggl{(}\prod_{i \in S} X_{i}\biggr{)}
 X_{C \cup B \setminus S}^{(s+t)}}\Biggr].
\end{equation}

\

\

\nd Applying $c$ and dividing both sides by 
$v^{s+t}\bigl(\prod\limits_{i \in A}{w_{i}^2}\bigr)
\bigl(\prod\limits_{i \in B\cup C}{w_{i}}\bigr)$ 
makes \eqref{equation:relation.(I)} equivalent to

\

\begin{equation}\label{equation:relation.(I).in.terms.of.w}
w_{A\cup B}^{(s)}w_{A\cup C}^{(t)} = 
\sum_{T \subseteq A}{\Bigl[\bigl(\prod_{i\in T}{w_i}\bigr)w_{T \cup B \cup C}^{(s+t)}\Bigr]}
+ (-1)^{s+ |A \cup B|}\sum_{S \subseteq B}{(-1)^{|S|}\bigr(\prod_{i \in S}{w_i}\bigr)
w_{C \cup B \setminus S}^{(s+t)}}.
\end{equation}

\

\nd The left hand side of  \eqref{equation:relation.(I).in.terms.of.w} is checked
easily to satisfy

\

\begin{equation}\label{equation:lhs.of.relation(I)}
w_{A\cup B}^{(s)}w_{A\cup C}^{(t)} =  w_{A, B\cup C, \phi}^{(s+t)}
+ (-1)^{s + |A \cup B|}w_{C,B}^{(s+t)}.
\end{equation}

\

\nd The first term on the right hand side of \eqref{equation:relation.(I).in.terms.of.w} satisfies

\

\begin{equation}\label{equation:rhs.first.term}
\sum_{T \subseteq A}\Bigl[{\bigl(\prod_{i\in T}{w_i}\bigr)w_{T \cup B \cup C}^{(s+t)}
\Bigr]} =
\sum_{T \subseteq A}{\Bigl(\sum_{R \subseteq T}{(-1)^{|T\setminus R|}
w_{R,B\cup C,\phi}^{(s+t)}}\Bigr),}
\end{equation}

\

\nd where $R \subseteq T$ is defined by the fact that $T\setminus R$ is
the set of $i$'s in $T$ for which,  in $\prod\limits_{i\in T}{w_i}$, is chosen
the second term of $w_i = z_i - \frac{1}{z_i}$. With  
$T$ satisfying $R \subseteq T \subseteq A$, the order of summation on the 
right hand side of \eqref{equation:rhs.first.term} is rearranged 
to give

\

\begin{equation}\label{equation:rhs.of.relation(I).first.term}
\sum_{T \subseteq A}{\Bigl(\sum_{R \subseteq T}{(-1)^{|T\setminus R|}
w_{R,B\cup C,\phi}^{(s+t)}}\Bigr)} = 
\sum_{R\subseteq A}{w_{R, B\cup C,\phi}^{(s+t)}\Bigl(\sum_
{T\subseteq A}{(-1)^{|T\setminus R|}}\Bigr)}.
\end{equation}

\

Now
\begin{equation}\label{equation:binomial.identity}
\sum_{T\subseteq A}{(-1)^{|T\setminus R|}} =
\sum_{j = 0}^{|A \setminus R|}{(-1)^{j}\biggl(\begin{array}{c}
|A \setminus R|\\
j
\end{array}\biggr)}.
\end{equation}

\

\nd Here, the binomial coefficient on the right hand side counts the number of
sets $T$, $R \subseteq T \subseteq A$ satisfying  $|T\setminus R| = j$. Notice that
the right-hand side is zero unless $A = R\;$ in which case it equals 1. So now \eqref{equation:rhs.first.term}
implies that the first term on the right of \eqref{equation:relation.(I).in.terms.of.w}
is $w_{A, B\cup C, \phi}^{(s+t)}$  which is the first term on the right-hand side of 
\eqref{equation:lhs.of.relation(I)}. 

\

The second term on the right hand side of \eqref{equation:relation.(I).in.terms.of.w} 
is analyzed similarly. With $S$ satisfying $U \subseteq S \subseteq  B$,

\begin{multline}\label{equation:rhs.of.relation(I).second.term}
(-1)^{s+ |A \cup B|}\sum_{S \subseteq B}{(-1)^{|S|}\bigr(\prod_{i \in S}{w_i}\bigr)
w_{C \cup B \setminus S}^{(s+t)}}\\  
= \; \; (-1)^{s+ |A \cup B|}\sum_{S \subseteq B}{\biggl((-1)^{|S|}
\sum_{U \subseteq S}{(-1)^{|U|}w_{C\cup B \setminus U,U}^{(s+t)}}\biggr)}
\end{multline}

\

\nd where here $S \setminus U$  is the set of $i$'s in $S$ 
for which is chosen in $\prod\limits_{i\in S}{w_i}$
the second term of $w_i = z_i - \frac{1}{z_i}$. Continuing as above,

\begin{equation}\label{equation:binomial.identity.second term}
(-1)^{s+ |A \cup B|}\sum_{S \subseteq B}{\biggl((-1)^{|S|}
\sum_{U \subseteq S}{(-1)^{|U|}w_{C\cup B \setminus U,U}^{(s+t)}}\biggr)}
\qquad \qquad \qquad \qquad
\end{equation}
\begin{align*}
\qquad \qquad & =(-1)^{s+ |A \cup B|}\sum_{U \subseteq B}
{\biggl[w_{C \cup B\setminus U,U}^
{(s+t)}\Bigl(\sum_{S}{(-1)^{|S \setminus U|}}\Bigr)\biggr]}\\
 \qquad \qquad & =(-1)^{s+ |A \cup B|}\sum_{U \subseteq B}
 \Biggl[w_{C \cup B\setminus U,U}^
{(s+t)}\sum_{j=0}^{|B \setminus U|}{(-1)^{j}}
           \biggl(\begin{array}{c}
                   |B \setminus U|\\
                   j
                   \end{array}\biggr)\Biggr]\\
                   \\
\qquad \qquad & =(-1)^{s+ |A \cup B|}w_{C,B}^{(s+t)}                   
\end{align*}

\

\nd which is the second term on the right hand side of 
\eqref{equation:lhs.of.relation(I)}. This completes the proof of the 
relations (I).

\ 

The verification of relations (II) is next.
For $ i < {\mathrm {min}}(S)$, $|S| >1$ and $s \in \{0,1\}$, the second set
of relations is

\

$$X_{i}X_{S}^{(s)} = (-1)^{s}\sum_{T \subseteq S}
{\Bigl[(-1)^{|T|} \bigl(\prod_{j \in S \setminus T}X_{j}\bigr) \cdot X_{\{i\} \cup T}^{(s)}\Bigr]} + X_{\{i\} \cup S}^{(s)}\;.$$

\

 \nd Applying $c$ to both sides  and dividing by $\prod\limits_{j \in \{i\} \cup S}{\!w_j}$
makes relations (II) equivalent to

\

\begin{equation}\label{equation:relation(II).in.terms.of.w}
w_{i}w_{S}^{(s)} = (-1)^{s}\sum_{T\subseteq S}\Bigl[(-1)^{|T|}w_{\{i\}\cup T}^{(s)}
\prod_{j \in S\setminus T}w_j\Bigr] + w_{\{i\}\cup S}^{(s)}.
\end{equation}

\

\nd Definition \ref{defin:don.relations.notation} implies immediately that


\

\begin{equation}\label{equation:lhs.of.relation(II)}
w_{i}w_{S}^{(s)} = - \;w_{S,\{i\}}^{(s)} \; +\; w_{\{i\} \cup S}^{(s)}.
\end{equation}

\ 

\nd and so it remains to show that

\

\begin{equation}
(-1)^{s}\sum_{T\subseteq S}\Bigl[(-1)^{|T|}w_{\{i\}\cup T}^{(s)}
\prod_{j \in S\setminus T}w_j\Bigr] = - \;w_{S,\{i\}}^{(s)}.
\end{equation}

\

 \nd To this end and using the fact that $i < {\mathrm{min}}(S)$ implies
$i \notin S$,

\

\begin{align*}\label{equation:first.times.of.relation(II)}
(-1)^{s}\sum_{T\subseteq S}\Bigl[(-1)^{|T|}w_{\{i\}\cup T}^{(s)}
\prod_{j \in S\setminus T}w_j\Bigr] & =
(-1)^{s}\sum_{T \subseteq S}{\Bigl[(-1)^{|T|}}\sum_{B \subseteq S\setminus T}
{(-1)^{|B|}w_{\{i\} \cup S\setminus B, B}^{(s)}}\Bigr]\\
\\
& = (-1)^{s}\sum_{B \subseteq S}{\Bigl[(-1)^{|B|}w_{\{i\}\cup S\setminus B,B}^{(s)}
\sum_{T \subseteq S\setminus B}{(-1)^{|T|}}\Bigr]}\\
\\
& = (-1)^{s}\sum_{B \subseteq S}{\Biggl[(-1)^{|B|}w_{\{i\}\cup S\setminus B,B}^{(s)}
\sum_{j = 0}^{S\setminus B}{(-1)^{j}}
 \biggl(\begin{array}{c}
                   |S \setminus B|\\
                   j
                   \end{array}\biggr)\Biggr]}\\
\\
& = (-1)^{s + |S|}w_{\{i\}, S}^{(s)} \; = \; - w_{S, \{i\}}^{(s)}
\end{align*}

\

\

\nd where, as in  \eqref{equation:binomial.identity},
  $\;\; \ds{\sum_{j = 0}^{S\setminus B}{(-1)^{j}}
 \biggl(\begin{array}{c}
                   |S \setminus B|\\
                   j
                   \end{array}\biggr) = 0}$ unless $S = B$ in which case it equals 1.
                   
\

\                   

\subsection{The proof of Theorems \ref{thm:ko.of.smash} and 
\ref{thm:ko.prod.proj.basis.1}.}  First, the additive generators appearing in
Theorem \ref{thm:ko.of.smash} are identified with the $KU$ generators given by 
Theorem \ref{thm:main.theorem}. Choose generators 
$y_i \in \widetilde{KU}^{0}(\bigwedge_{i = 2}^{m}\mathbb{C}P^{\infty})$ so that

\begin{equation}
\widetilde{KU}^{*}(\bigwedge_{i = 2}^{m}\mathbb{C}P^{\infty}) \;\cong\;
\mathbb{Z}[v^{\pm 1}][\mspace{-2mu}[ y_2,\ldots,y_m]\mspace{-2mu}]\cdot
(y_2\cdots y_m). \end{equation}

\

\nd Theorem \ref{thm:main.theorem}  can be written as

\begin{equation}\label{eqn:additive.structure}
\widetilde{KO}^{*}(\bigwedge_{i = 1}^{m}\mathbb{C}P^{\infty}) \;\cong\;
\bigoplus_{k=1}^{\infty} \widetilde{KU}^{*\;+\;4k}(\bigwedge_{i = 2}^{m}\mathbb{C}P^{\infty})
\;\cong\; \mathbb{Z}[v^{\pm 1}][\mspace{-2mu}[z, y_2,\ldots,y_m]\mspace{-2mu}]\cdot
(zy_2\cdots y_m)\end{equation}

\

\nd where $z$ is given grading equal to $4$.  The fact that the realification
map $r$ increases filtration in the Adams spectral sequence by $1$, $v$ has filtration 1
and $\gamma$ filtration $2$, allows the determination of  the Adams filtration of the
generators described in Theorem \ref{thm:ko.of.smash}. This then makes possible a 
comparison of generators via the Adams spectral sequence, modulo terms of higher filtration. 

\

\begin{lem}\label{lem:comparison}
Let $S$ be a set satisfying $1\in S \subseteq N$. In the description 
(\ref{eqn:additive.structure}), the element

$$v^{2j+s}z^{e_1}y_2^{2e_2}\cdot\ldots\cdot y_m^{2e_m}
\big(\prod_{i\in N\setminus S}{y_i}\big)zy_2\cdot\ldots\cdot y_m 
 \;\in\;
 \widetilde{KU}^{-(4j+2s) +\;4(e_1 +1)}(\bigwedge_{i = 2}^{m}\mathbb{C}P^{\infty})$$ 

\

\nd corresponds to the element 

$$X_{1}^{e_1}\cdot \ldots \cdot X_{m}^{e_m}\gamma^{j}M_{S}^{(s)} \; \in\;
\widetilde{KO}^{-(4j+2s)}(\widehat{BT}^m) \;=\; 
\widetilde{KO}^{-(4j+2s)}(\bigwedge_{i = 1}^{m}\mathbb{C}P^{\infty}),$$
 
\

\nd modulo terms of higher filtration in the Adams spectral sequence. \hfill $\square$
\end{lem}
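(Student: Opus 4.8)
The plan is to treat both the $KU$-monomials appearing in \eqref{eqn:additive.structure} and the $KO$-monomials $X_{1}^{e_1}\cdots X_{m}^{e_m}\gamma^{j}M_{S}^{(s)}$ of Theorem \ref{thm:ko.of.smash} as topological bases of $\widetilde{KO}^{*}(\widehat{BT}^m)$, indexed by the same data, and to show that the isomorphism of Theorem \ref{thm:main.theorem} carries the one family to the other with a change of basis that is unipotent and strictly filtration-increasing off the diagonal. The first step is the purely combinatorial verification that the two indexing sets coincide. A $KU$-basis element is specified by a power $a\in\mathbb Z$ of $v$, a summand index $k\ge 1$ (the power of the bookkeeping class $z$), and exponents $b_i\ge 1$ of $y_i$ for $2\le i\le m$; the $KO$-monomials are specified by $j\in\mathbb Z$, $s\in\{0,1\}$, a subset $S\ni 1$, and exponents $e_1,\dots,e_m\ge 0$. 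Reading the exponents off the displayed $KU$-monomial gives $a=2j+s$, $k=e_1+1$, and $b_i=2e_i+1$ for $i\in S$ but $b_i=2e_i+2$ for $i\notin S$. Thus the parity of $b_i$ detects membership of $i$ in $S$ while its remaining size recovers $e_i$; since $(j,s)\mapsto 2j+s$ and $e_1\mapsto e_1+1$ are bijections onto $\mathbb Z$ and onto $\{k\ge 1\}$ respectively, the two families are indexed identically.

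Second, I would pin down the leading term of each $X_{1}^{e_1}\cdots X_{m}^{e_m}\gamma^{j}M_{S}^{(s)}$ in the collapsing Adams spectral sequence. The three facts recalled above---that $r$ raises filtration by one, that $v$ has filtration one, and that $\gamma$ has filtration two---determine the filtration in which each factor first appears, the classes $x_i$ (equivalently $y_i$) sitting in filtration zero. Because $c$ is an injective ring homomorphism with $c(\gamma)=v^{2}$ and, by \eqref{eqn:r.and.c}, $c(X_i)=x_i+\overline{x}_i$, while $c(X_{S}^{(s)})=v^{s}\prod_{i\in S}x_i+\overline{v}^{\,s}\prod_{i\in S}\overline{x}_i$ was computed above, one may expand $c$ of the whole monomial as an explicit power series and extract its leading part. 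The decisive feature is the scalar $1+(-1)^{s+|S|}$ coming from $\overline{v}^{\,s}\prod\overline{x}_i$: when $s+|S|$ is even it equals $2$, signalling that the leading term sits one filtration higher, and when $s+|S|$ is odd it vanishes so that the lowest surviving contribution is forced into the slots indexed by $S$. In either case this is exactly the behaviour recorded by the parity bookkeeping of the first paragraph, and matching exponents identifies the leading term with $c$ of the asserted $KU$-monomial once $y_i$ is identified with $x_i$ and $z$ with the summand index.

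Finally, the collapse of the Adams spectral sequence established in the proof of Theorem \ref{thm:bu.and.bo} and Corollary \ref{cor:ko.homology} shows that its $E_\infty$-page is the associated graded realising the $KU$-summands of \eqref{eqn:additive.structure}, with no hidden extensions to disturb the leading terms. Combined with the indexing bijection, this gives that the isomorphism of Theorem \ref{thm:main.theorem} sends each $KU$-monomial to the corresponding $KO$-monomial modulo strictly higher filtration, as asserted. The main obstacle throughout is that $r$ is not multiplicative, so the product $X_{1}^{e_1}\cdots X_{m}^{e_m}\gamma^{j}M_{S}^{(s)}$ cannot be analysed factor by factor on the $KO$ side; complexifying first converts it into a genuine product of power series, after which the only delicate point is the parity-dependent vanishing of $1+(-1)^{s+|S|}$, which the bijection of indexing data is precisely designed to absorb.
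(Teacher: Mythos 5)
Your first paragraph is correct and useful: reading off $a=2j+s$, $k=e_1+1$, and $b_i=2e_i+1$ or $2e_i+2$ according to whether $i\in S$, and checking that this is a bijection of indexing data, is exactly the bookkeeping implicit in the paper. The gap is in your second paragraph. To begin with, the phrase ``$c$ of the asserted $KU$-monomial'' does not typecheck: that monomial lives in $\bigoplus_{k\ge1}\widetilde{KU}^{*+4k}(\bigwedge_{i=2}^{m}\mathbb{C}P^{\infty})$, which is carried to $\widetilde{KO}^{*}(\widehat{BT}^m)$ by the isomorphism of Theorem \ref{thm:main.theorem}, built from the Wood-type splitting of Theorem \ref{thm:bu.and.bo} in which the \emph{first} $\mathbb{C}P^{\infty}$ factor is absorbed into $bo$ (the $k$-th summand corresponding to the cells $u_1^{2k-1}, u_1^{2k}$ of that factor). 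It is not in the source or target of $c\colon KO^{*}(\widehat{BT}^m)\to KU^{*}(\widehat{BT}^m)$. Your dictionary ``$y_i\leftrightarrow x_i$, $z\leftrightarrow$ summand index'' is therefore not an identification you may invoke; it is essentially the content of the lemma, and nothing in the proposal connects the splitting isomorphism with the complexification map.

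Moreover, the proposed exponent matching fails concretely. Using $c(X_i)=x_i+\overline{x}_i=x_i^{2}/(1+x_i)$ and the paper's formula for $c(X_S^{(s)})$, the lowest-order part of $c\bigl(X_1^{e_1}\cdots X_m^{e_m}\gamma^{j}M_S^{(s)}\bigr)$ is
\begin{equation*}
2\,v^{2j+s}\prod_{i\in S}x_i^{2e_i+1}\prod_{i\notin S}x_i^{2e_i+2}
\qquad\text{if $s+|S|$ is even},
\end{equation*}
but
\begin{equation*}
v^{2j+s}\sum_{i_0\in S}\,x_{i_0}\prod_{i\in S}x_i^{2e_i+1}\prod_{i\notin S}x_i^{2e_i+2}
\qquad\text{if $s+|S|$ is odd}.
\end{equation*}
In the odd case every monomial in the sum has exactly one \emph{even} exponent among the slots indexed by $S$, so none of them has the all-odd pattern that your dictionary assigns to $z^{e_1+1}\prod_{i\in S,\,i\ge 2}y_i^{2e_i+1}\prod_{i\notin S}y_i^{2e_i+2}$: the exponents simply do not match. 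In the even case the exponents do match, but the coefficient $2$ means $c$ has raised Adams filtration by one (the induced map on $E_2$ kills the detecting class of the $KO$-element), so the leading term of the complexification sits one filtration above the element itself and cannot detect its class in the associated graded of the $KO$-filtration. This is the structural reason your strategy cannot work as stated: precisely on half of the generators, complexification does not see the $KO$-Adams associated graded faithfully. The paper's argument never routes through $c$ here; it locates the generators directly in the $KO$-side spectral sequence, using that $r$ raises Adams filtration by exactly $1$, that $v$ has filtration $1$ and $\gamma$ filtration $2$, and then compares with the collapsed $E_2=E_\infty$ term identified summand by summand via Theorem \ref{thm:bu.and.bo} and Corollary \ref{cor:ko.homology}. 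To repair your proof you would either have to follow that route, or else compute the map induced by $c$ on $E_2$-pages and show that your family of leading parts remains triangular despite the two phenomena above; neither is done in the proposal.
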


\

\nd Lemma \ref{lem:comparison}  allows now the use of Theorem \ref{thm:main.theorem} 
to conclude that the generators given by Theorem \ref{thm:ko.of.smash} must span 
$\widetilde{KO}^{-(4j+2s)}(\widehat{BT}^m)$ and be linearly independent.

\

The proof of Theorem \ref{thm:ko.prod.proj.basis.1} from Theorem \ref{thm:ko.of.smash} 
uses the homotopy equivalence 

\begin{equation}\label{eqn:split.product}
\Sigma(Y_1 \times Y_2 \times \ldots \times Y_m) \longrightarrow 
\Sigma\Big(\bigvee_{S \subseteq N} \big(\bigwedge_{i\in S}{Y_i}\big)\Big)\end{equation}

\

\nd where $Y_i = \mathbb{C}P^{\infty}$ for all $i \in N$. Theorem \ref{thm:ko.of.smash} 
is applied to each wedge summand $\bigwedge\limits_{i\in S}{Y_i}$ to conclude that
for $S = \{i_1,i_2,\ldots,i_{|S|}\}$,
$\;\widetilde{KO}^{-(4j+2s)}\displaystyle{\big(\bigwedge\limits_{i\in S}{Y_i}\big)}$ is a free module over
${\mathbb Z}[\mspace{-2mu} [X_{i_1}, \ldots , X_{i_{|S|}}]\mspace{-2mu}]$ on
$\left\{ \gamma^{j}M_{T}^{(s)} : 1 \in T \subseteq S \right\} $. Assembling these generators
over all  $S\subseteq N$, $S \neq \phi$ produces the generators in 
Theorem \ref{thm:ko.prod.proj.basis.1}. The multiplicative relations (I) and (II) have been
checked.

\

\section{A second set of generators for the algebra $KO^{*}(BT^{m})$}\label{sec:ijays}
\subsection{Notation and statement of results} As usual,
\begin{align*}
KU^{*} \;& \cong \;  \mathbb{Z}[v, v^{-1}] \;\;  \mbox{with}\;\;  v \in KU^{-2}\\
KO^{*} \;& \cong \;  \mathbb{Z}[e , \alpha, \beta, \beta^{-1}]/(2e, e^3, e\alpha, 
\alpha^{2} - 4\beta) 
\end{align*} 

\nd where $e \in KO^{-1}, \alpha \in KO^{-4}$ and $\beta \in KO^{-8}$.
As in Section \ref{sec:ring.structure}, denote by  $x_i, \; i = 1, \ldots , n$,  the generators 
of $KU^{0}(BT^n)$. It is convenient to write

\begin{equation}\label{eqn:conjugates}
KU^{0}(BT^m) \; \cong \; 
\mathbb{Z}[\mspace{-2mu}[ x_1,\ldots,x_m,
\overline{x}_{1},\ldots,\overline{x}_{m}]\mspace{-2mu}]\big/(x_{i}\overline{x}_{i} + x_i + 
\overline{x}_{i}).\end{equation}

\

\nd Let $I = (i_1,i_2,\ldots,i_m) \; \mbox{and} \; J = (j_1,j_2,\ldots,j_m)$ \; with\;
$i_k \geq 0 , j_k \geq 0 \;\; \mbox{for} \;\; k = 1, \ldots ,m$.
For $s \in \mathbb{Z}$ and $r$ the realification map (\ref{eqn:r}), set

$$[I,J]^{(s)} \; := \; 
r\big(v^{s}x_{1}^{i_{1}}x_{2}^{i_{2}}\ldots x_{m}^{i_{m}}
(\overline{x}_{{1}})^{j_{1}}(\overline{x}_{{2}})^{j_{2}}\ldots (\overline{x}_{{m}})^{j_{m}}\bigr)$$

\

\nd  in $KO^{-2s}(BT^{m})$.  If $s = 0$, the notation $[I,J]$ is used instead of
$[I,J]^{(0)}$.

\

\begin{thm}\label{thm:ij.relations}
The classes $[I,J]^{(s)}$ satisfy the relations:

\skp{0.2}

\begin{itemize}
\item[(A)] $\quad \ds{[I,J]^{(s)} \; = \; (-1)^{s}[J,I]^{(s)}}$
\item[]
\item[(B)] $\quad \ds{[I,J]^{(s)} \; = \; -[I',J]^{(s)}\; - \; [I,J']^{(s)}}$
\skp{0.2}
\nd where, for $I = (i_1,\ldots,i_k, \ldots, i_m), \; J = (j_1,\ldots,j_k, \ldots, j_m)$
with $\;i_{k}\cdot j_{k} \neq 0$, 
$$I' =  (i_1,\ldots,i_{k}-1, \ldots, i_m)\quad  \text{and} \quad J' = (j_1,\ldots,j_{k}-1, \ldots, j_m).$$
\item[]
\item[(C)] $\quad [I,J]^{(s)}\cdot[H,K]^{(t)} = [I+H,J+K]^{(s+t)} +
(-1)^{s}[J+H,I+K]^{(s+t)}$
\skp{0.2}
\nd where the product here is in $KO^*(BT^m)$.
 
\end{itemize} 
\end{thm}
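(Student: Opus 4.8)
The plan is to exploit injectivity of the complexification map $c$ (from the Bott sequence~\eqref{eqn:bott.seq}) together with the identities~\eqref{eqn:r.and.c}, so that every assertion may be verified after applying $c$ inside $KU^*(BT^m)$, where the conjugation relation~\eqref{eqn:conjugates} is available. The single structural fact I would isolate first is a product formula for the realification map. Writing $\overline{(\,\cdot\,)}$ for complex conjugation and using that $c$ is a ring homomorphism with $c\circ r(x)=x+\overline{x}$, one computes for any $a,b\in KU^*(BT^m)$
$$c\bigl(r(a)\,r(b)\bigr)=(a+\overline{a})(b+\overline{b})=(ab+\overline{ab})+(a\overline{b}+\overline{a\overline{b}})=c\bigl(r(ab)\bigr)+c\bigl(r(a\overline{b})\bigr),$$
and injectivity of $c$ yields the master identity $r(a)\,r(b)=r(ab)+r(a\overline{b})$. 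I would also record the sign $\overline{v}=-v$ for the Bott element: from $r(v^2)=\alpha$ and $c(\alpha)=2v^2$ one gets $\overline{v^2}=v^2$, and the minus sign is the standard fact that conjugation reverses the orientation of $S^2=\mathbb{C}P^1$.

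Given these, relations (A) and (B) are routine. Abbreviate $X^I:=x_1^{i_1}\cdots x_m^{i_m}$ and $\overline{X}^J:=\overline{x}_1^{j_1}\cdots\overline{x}_m^{j_m}$. For (A), applying $c$ and using $\overline{v}=-v$ gives $c\bigl([I,J]^{(s)}\bigr)=v^sX^I\overline{X}^J+(-1)^sv^sX^J\overline{X}^I$, which is exactly $(-1)^s$ times the same expression for $[J,I]^{(s)}$, so (A) follows from injectivity of $c$. For (B) no complexification is needed: the relation~\eqref{eqn:conjugates} reads $x_k\overline{x}_k=-x_k-\overline{x}_k$, and multiplying this by $v^sX^{I'}\overline{X}^{J'}$ (the monomial with both $k$-th exponents lowered by one) gives $v^sX^I\overline{X}^J=-v^sX^I\overline{X}^{J'}-v^sX^{I'}\overline{X}^J$ in $KU^{-2s}(BT^m)$; applying the additive map $r$ then yields $[I,J]^{(s)}=-[I,J']^{(s)}-[I',J]^{(s)}$.

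The substantive relation is (C), and the master identity is precisely the tool that compensates for $r$ failing to be multiplicative. Taking $a=v^sX^I\overline{X}^J$ and $b=v^tX^H\overline{X}^K$, I would compute $ab=v^{s+t}X^{I+H}\overline{X}^{J+K}$, so $r(ab)=[I+H,J+K]^{(s+t)}$, and, using $\overline{v}=-v$, $a\overline{b}=(-1)^tv^{s+t}X^{I+K}\overline{X}^{J+H}$, so $r(a\overline{b})=(-1)^t[I+K,J+H]^{(s+t)}$. Rewriting the latter by relation (A) as $(-1)^t(-1)^{s+t}[J+H,I+K]^{(s+t)}=(-1)^s[J+H,I+K]^{(s+t)}$ and substituting into $r(a)r(b)=r(ab)+r(a\overline{b})$ gives (C). The only delicate point I anticipate is bookkeeping the conjugation signs arising from $\overline{v}=-v$ and reconciling them with the index transposition through (A); once the master identity is in place, nothing further about the Adams spectral sequence or the stable splitting is required.
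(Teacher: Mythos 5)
Your proposal is correct and takes essentially the same route as the paper: all three relations are checked after applying the injective complexification $c$, using $c\circ r = \mathrm{id} + \text{conjugation}$, the relation $x_k\overline{x}_k=-x_k-\overline{x}_k$ for (B), and the sign $\overline{v}=-v$. Your ``master identity'' $r(a)\,r(b)=r(ab)+r(a\overline{b})$ is simply a tidy repackaging of the paper's own four-term expansion of $c\bigl([I,J]^{(s)}\bigr)\cdot c\bigl([H,K]^{(t)}\bigr)$ followed by regrouping, so the content is identical.
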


\begin{rem}
Formula (C) is symmetric because relation (A) implies
$$\ (-1)^{s}[J+H,I+K]^{(s+t)} = (-1)^{t}[I+K,J+H]^{(s+t)}.$$
\end{rem}

\nd {\em Proof of Theorem \ref{thm:ij.relations}.\/} Relations (A) follow immediately 
from (\ref{eqn:r.and.c}) by 
applying complexification then realification. Relations (B) follow by recalling 
that $x = -\overline{x}(1 + x)$ and decomposing
$x^{i}\overline{x}^{j-1}$ as 

\begin{align*}
x^{i}\overline{x}^{j-1}   &= x^{i-1}x\overline{x}^{j-1}\\
                                    &= x^{i-1}\overline{x}^{j-1}( -\overline{x}(1 + x))\\
                                    &= -x^{i-1}\overline{x}^{j} - x^{i}\overline{x}^{j}\\
\end{align*}

\nd which gives
$x^{i}\overline{x}^{j} = -x^{i-1}\overline{x}^{j} - x^{i}\overline{x}^{j-1}$.
To see relations (C), the complexification monomorphism $c$ is applied to both sides.

$$c\big([I,J]^{(s)}\cdot[H,K]^{(t)}\big) = 
c\big([I,J]^{(s)}\big)\cdot c\big([H,K]^{(t)}\big)$$
\begin{align*}
&\quad= [v^{s}x_{1}^{i_{1}}x_{2}^{i_{2}}\ldots x_{m}^{i_{m}}
(\overline{x}_{{1}})^{j_{1}}(\overline{x}_{{2}})^{j_{2}}\ldots (\overline{x}_{{m}})^{j_{m}} +
(-1)^{s}v^{s}(\overline{x}_{{1}})^{i_{1}}(\overline{x}_{{2}})^{i_{2}}\ldots (\overline{x}_{{m}})^{i_{m}}
x_{1}^{j_{1}}x_{2}^{j_{2}}\ldots x_{m}^{j_{m}}]\\
&\quad \cdot [v^{t}x_{1}^{h_{1}}x_{2}^{h_{2}}\ldots x_{m}^{h_{m}}
(\overline{x}_{{1}})^{k_{1}}(\overline{x}_{{2}})^{k_{2}}\ldots (\overline{x}_{{m}})^{k_{m}} +
(-1)^{t}v^{t}(\overline{x}_{{1}})^{h_{1}}(\overline{x}_{{2}})^{h_{2}}\ldots (\overline{x}_{{m}})^{h_{m}}
x_{1}^{k_{1}}x_{2}^{k_{2}}\ldots x_{m}^{k_{m}}]\\
\\
&\hspace{0.4in} = v^{s+t}x_{1}^{i_{1}+h_{1}}x_{2}^{i_{2}+h_{2}}\ldots x_{m}^{i_{m}+h_{m}}
(\overline{x}_{{1}})^{j_{1}+k_{1}}(\overline{x}_{{2}})^{j_{2}+k_{2}}\ldots 
(\overline{x}_{{m}})^{j_{m}+k_{m}}\\
&\hspace{0.6in} + (-1)^{s+t}v^{s+t}x_{1}^{j_{1}+k_{1}}x_{2}^{j_{2}+k_{2}}\ldots 
x_{m}^{j_{m}+k_{m}}
(\overline{x}_{{1}})^{i_{1}+h_{1}}(\overline{x}_{{2}})^{i_{2}+h_{2}}\ldots 
(\overline{x}_{{m}})^{i_{m}+h_{m}}\\
&\hspace{0.6in} + (-1)^{s}v^{s+t}x_{1}^{j_{1}+h_{1}}x_{2}^{j_{2}+h_{2}}\ldots 
x_{m}^{j_{m}+h_{m}}
(\overline{x}_{{1}})^{i_{1}+k_{1}}(\overline{x}_{{2}})^{i_{2}+k_{2}}\ldots 
(\overline{x}_{{m}})^{i_{m}+k_{m}}\\
&\hspace{0.6in} + (-1)^{t}v^{s+t}x_{1}^{i_{1}+k_{1}}x_{2}^{i_{2}+k_{2}}\ldots x_{m}^{i_{m}+k_{m}}
(\overline{x}_{{1}})^{j_{1}+h_{1}}(\overline{x}_{{2}})^{j_{2}+h_{2}}\ldots 
(\overline{x}_{{m}})^{j_{m}+h_{m}}\\
\\
&=\; c \big([I+H,J+K]^{(s+t)}\big) + c\big((-1)^{s}[J+H,I+K]^{(s+t)}\big) \\
&=\; c \big([I+H,J+K]^{(s+t)}\big) + c\big((-1)^{t}[I+K,J+H]^{(s+t)}\big). \qquad \qquad \qquad  
\qquad \qquad \qquad \qquad  \square 
\end{align*} 

\

\begin{remark}
The elements $X_{S}^{(s)}$ of Definition \ref{defin:don.generator.notation} are related to the
classes $[I,J]^{(s)}$ by
$$X_{S}^{(s)} = [(\epsilon(1), \epsilon(2), \ldots,\epsilon(m)), (0,0,\ldots,0)]^{(s)}$$

\nd where $\epsilon$ is the characteristic function of $S$.
\end{remark}

Next, a distinguished class of elements $[I,J]^{(s)} \in KO^{-2s}(BT^m)$ is selected. 

\nd For $I = (i_1,i_2,\ldots,i_m)$  and $J = (j_1,j_2,\ldots,j_m)$, with all $i_k \geq 0$, 
$j_k \geq 0$, set

\begin{equation}\label{eqn:g2}
\mathcal{G}_2 := \big\{[I,J]^{(s)} : I\cdot J = 0 \;\; \text{and}\; \;
i_l \geq j_l \; \;\text{if}\; \;i_k + j_k = 0 \;\;\text{for}\;\; k < l\big\}\end{equation}

\nd where here, $I\cdot J$ denotes the dot product of vectors. 

\

\nd The $KO^*$-module structure is  described easily. Recall that

\begin{align*}
KO^{*} \;& \cong \;  \mathbb{Z}[e , \alpha, \beta, \beta^{-1}]/(2e, e^3, e\alpha, 
\alpha^{2} - 4\beta).
\end{align*}

\

\begin{lem}\label{eqn:module.structure}
The $KO^*$-module action on $KO^*(BT^m)$ is given by
\begin{align*}
e\cdot ([I],[J])^{(s)} &= 0\\
\alpha\cdot ([I],[J])^{(s)} &= 2([I],[J])^{(s+2)}\\
\beta\cdot ([I],[J])^{(s)} &= ([I],[J])^{(s+4)}.
\end{align*}
\end{lem}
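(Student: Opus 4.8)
The plan is to derive all three identities from a single structural principle: realification $r\colon KU^{*}(BT^m)\to KO^{*}(BT^m)$ is a homomorphism of $KO^{*}$-modules, where $KU^{*}(BT^m)$ carries the $KO^{*}$-module structure induced by complexification $c$. This is the projection formula
$$a\cdot r(w)\;=\;r\bigl(c(a)\,w\bigr),\qquad a\in KO^{*},\ w\in KU^{*}(BT^m),$$
a standard property relating real and complex $K$-theory. Since, by the definition of the classes in Section \ref{sec:ijays}, every $[I,J]^{(s)}=r\bigl(v^{s}u\bigr)$ with $u=x_{1}^{i_1}\cdots x_{m}^{i_m}\,\overline{x}_{1}^{j_1}\cdots\overline{x}_{m}^{j_m}$ is manifestly in the image of $r$, the lemma reduces entirely to computing $c(e)$, $c(\alpha)$, $c(\beta)$ and then pushing the resulting scalar back inside $r$.

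Next I would carry out those three computations. For $e$: because $KU^{*}$ of a point is concentrated in even degrees we have $KU^{-1}=0$, so $c(e)=0$, and the projection formula gives $e\cdot[I,J]^{(s)}=r\bigl(c(e)\,v^{s}u\bigr)=0$; equivalently, one may invoke the Bott sequence \eqref{eqn:bott.seq}, whose exactness together with the injectivity of $c$ forces multiplication by $e$ to vanish on all of $KO^{*}(BT^m)$. For $\alpha$: the relation $c(\alpha)=2v^{2}$ recorded in Section \ref{sec:ring.structure} yields $\alpha\cdot[I,J]^{(s)}=r\bigl(2v^{2}\cdot v^{s}u\bigr)=2\,r\bigl(v^{s+2}u\bigr)=2\,[I,J]^{(s+2)}$. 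For $\beta$: applying the ring map $c$ to $\alpha^{2}=4\beta$ gives $4\,c(\beta)=c(\alpha)^{2}=4v^{4}$, and since $KU^{*}\cong\mathbb{Z}[v^{\pm1}]$ is torsion free this forces $c(\beta)=v^{4}$; hence $\beta\cdot[I,J]^{(s)}=r\bigl(v^{s+4}u\bigr)=[I,J]^{(s+4)}$.

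The computation is essentially formal once the module-homomorphism property of $r$ is in hand, so I do not expect a serious obstacle. The one point that genuinely deserves emphasis --- and the reason this lemma is clean while the product structure in Theorems \ref{thm:ko.prod.proj.basis.1} and \ref{thm:ij.relations} is not --- is precisely that $r$ is linear over the coefficient ring $KO^{*}$ even though it fails to be a ring homomorphism: the scalars $c(e),c(\alpha),c(\beta)$ can be absorbed into $r$, whereas a general product $r(w)\,r(w')$ cannot. The only other non-formal input is the identification $c(\beta)=v^{4}$, which the torsion-freeness argument above settles immediately.
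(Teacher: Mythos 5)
Your proof is correct and is essentially the paper's own argument: the paper likewise reduces the lemma to the identities $c(e)=0$, $c(\alpha)=2v^2$, $c(\beta)=v^4$ (cited there from Dobson's thesis, Lemma 2.0.3) by applying the injective complexification $c$ to both sides of each relation. Your packaging via the projection formula $a\cdot r(w)=r(c(a)w)$ is just the same mechanism in module-theoretic form (it is itself verified by applying the injective $c$ and using $(c\circ r)(x)=x+\overline{x}$), with the minor added value that you derive $c(\beta)=v^4$ from $\alpha^2=4\beta$ rather than citing it.
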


\begin{proof} The complexification monomorphism $c$ is applied to both sides of 
these relations. The result follows then from the identities $c(e) = 0, \;c(\beta) = v^4$
and $c(\alpha) = 2v^2$ from \cite{dobson}, Lemma 2.0.3.
\end{proof}

\

\begin{thm}\label{thm:second.set}
Every element of $KO^*(BT^m)$ can be expressed as a formal sum of terms from $\mathcal{G}_2$.
\end{thm}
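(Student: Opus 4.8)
The plan is to show first that the individual classes $[I,J]^{(s)}$ already span $KO^*(BT^m)$, and then to rewrite each such class, using only relations (A) and (B) of Theorem \ref{thm:ij.relations}, as a (possibly infinite) $\mathbb{Z}$-linear combination of elements of $\mathcal{G}_2$. First I would invoke the surjectivity of the realification map $r$ recorded after \eqref{eqn:bott.seq}, together with the fact that $KU^*(BT^m)$ is topologically spanned over $\mathbb{Z}$ by the monomials $v^{s}x_1^{i_1}\cdots x_m^{i_m}\overline{x}_1^{j_1}\cdots\overline{x}_m^{j_m}$. Applying $r$ to these monomials produces exactly the classes $[I,J]^{(s)}$, so these span $KO^*(BT^m)$ as formal sums. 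It therefore suffices to express an arbitrary $[I,J]^{(s)}$ in terms of $\mathcal{G}_2$, and since both (A) and (B) preserve the superscript $s$, the resulting $\mathcal{G}_2$-terms will automatically lie in the same grading $-2s$.

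The first reduction is to the case $I\cdot J=0$, by induction on the overlap $I\cdot J=\sum_k i_k j_k$. If $I\cdot J>0$, choose a coordinate $k$ with $i_k j_k\neq 0$ and apply relation (B), which gives $[I,J]^{(s)} = -[I',J]^{(s)} - [I,J']^{(s)}$ with $I',J'$ obtained by lowering the $k$-th entry of $I$, respectively $J$, by one. A direct computation of the new overlaps yields $I'\cdot J = I\cdot J - j_k$ and $I\cdot J' = I\cdot J - i_k$, both strictly smaller than $I\cdot J$; hence by the inductive hypothesis each summand lies in the span of classes with vanishing overlap. This step is simply the reduction of the monomial $x^{I}\overline{x}^{J}$ to normal form modulo the defining relation $x_i\overline{x}_i+x_i+\overline{x}_i$ of \eqref{eqn:conjugates}.

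Once $I\cdot J=0$ holds, at each coordinate at most one of $i_k,j_k$ is nonzero, and it remains only to enforce the tie-breaking condition of \eqref{eqn:g2} using relation (A). Let $l$ be the smallest index with $i_l+j_l\neq 0$. If $i_l>0$ (so $j_l=0$) the class already satisfies the defining conditions of $\mathcal{G}_2$; otherwise $j_l>0$, and relation (A) gives $[I,J]^{(s)}=(-1)^{s}[J,I]^{(s)}$, whose first active coordinate now has a positive first-slot entry, so that $[J,I]^{(s)}\in\mathcal{G}_2$. The degenerate case $I=J=0$ satisfies the conditions vacuously. Combining the two reductions expresses every generator $[I,J]^{(s)}$ as a $\mathbb{Z}$-linear combination of elements of $\mathcal{G}_2$, and the theorem follows.

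The hard part will be not the algebraic rewriting, which is immediate from (A) and (B), but the compatibility of this term-by-term reduction with the completed (power-series) structure of $KO^*(BT^m)$: a single element is an infinite formal sum of monomial generators, and its reduction may produce infinitely many $\mathcal{G}_2$-terms. The key observation resolving this is that each application of (B) strictly lowers the total monomial degree $|I|+|J|$ while respecting the augmentation-ideal-adic filtration of \eqref{eqn:conjugates}; consequently only finitely many reduction steps contribute below any fixed filtration level, the resulting formal sum is well defined and convergent in $KO^*(BT^m)$, and the spanning statement passes from the monomial generators to arbitrary elements.
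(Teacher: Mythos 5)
Your first two paragraphs reproduce the paper's own argument: the classes $[I,J]^{(s)}$ topologically span $KO^*(BT^m)$ because $r$ is onto, and relations (A) and (B) of Theorem \ref{thm:ij.relations} reduce any \emph{single} class $[I,J]^{(s)}$, in finitely many steps, to a finite $\mathbb{Z}$-linear combination of elements of $\mathcal{G}_2$; your induction on the overlap $I\cdot J$ and the tie-breaking step via (A) are a correct expansion of what the paper leaves implicit. The genuine gap is in your final paragraph, which you yourself identify as the crux. The claim that, because (B) lowers total degree, ``only finitely many reduction steps contribute below any fixed filtration level'' is exactly backwards: since (B) \emph{lowers} $|I|+|J|$, the reduction of a class of arbitrarily high degree produces output all the way down at filtration $1$, so a fixed low-degree element of $\mathcal{G}_2$ can receive contributions from infinitely many terms of a convergent formal sum. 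Concretely, for $m=1$ and $s=0$, relations (B) and (A) give $[n,1]=-[n-1,1]-[n,0]$ and $[1,1]=-[0,1]-[1,0]=-2[1,0]$, so the reduction of $[n,1]$ contains the term $(-1)^{n}\,2\,[1,0]$ for every $n$. The element $\sum_{n\ge 2}[n,1]=r\bigl(\overline{x}_1\sum_{n\ge 2}x_1^{n}\bigr)$ is a perfectly good element of $KO^{0}(BT^1)$, since its terms tend to zero in the adic topology, but your term-by-term reduction assigns to $[1,0]$ the divergent coefficient $\sum_{n\ge 2}(-1)^{n}2$. So the passage from single generators to arbitrary elements, as you have set it up, fails.

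The repair is simple and is what the paper's proof tacitly relies on: do not introduce conjugates into the spanning step at all. Since $KU^{-2s}(BT^m)=v^{s}\,\mathbb{Z}[[x_1,\ldots,x_m]]$, every element of $KO^{-2s}(BT^m)$ has the form $r\bigl(v^{s}\sum_I a_I x^I\bigr)=\sum_I a_I[I,0]^{(s)}$, a convergent formal sum; and every class $[I,0]^{(s)}$ lies in $\mathcal{G}_2$, because with $J=0$ the conditions in (\ref{eqn:g2}) hold vacuously. This proves the spanning statement outright, and relations (A) and (B) are then needed only where the paper actually uses them: to rewrite an \emph{individual} mixed class $[I,J]^{(s)}$ (for instance the output of the product formula (C)) as a finite combination of $\mathcal{G}_2$ elements. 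Equivalently, if you insist on eliminating conjugates inside an infinite sum, you must use the degree-\emph{raising} substitution $\overline{x}_i=-x_i+x_i^{2}-x_i^{3}+\cdots$ rather than the degree-lowering rule (B); the paper's infinite relation $2[1,0]+\sum_{n\ge 2}(-1)^{n-1}[n,0]=0$ is precisely the shadow of that substitution, and its existence shows that divergence of naive term-by-term reduction is intrinsic, not an artifact of bookkeeping.
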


\begin{proof}
The classes $v^{s}x_{1}^{i_{1}}x_{2}^{i_{2}}\ldots x_{m}^{i_{m}}
(\overline{x}_{{1}})^{j_{1}}(\overline{x}_{{2}})^{j_{2}}\ldots (\overline{x}_{{m}})^{j_{m}}$
generate $KU^*(BT^m)$ as a power series ring. The realification map $r$  is onto by
(\ref{eqn:bott.seq}). So, the classes $[I,J]^{(s)} \in KO^{-2s}(BT^m)$ generate
$KO^*(BT^m)$ as a $KO^*$-module. Relations (A) and (B) in
Theorem \ref{thm:ij.relations} imply that every element $[I,J]^{(s)} \in KO^{-2s}(BT^m)$ can be
written as  a linear combination of elements in $\mathcal{G}_2$. A product of two elements in 
$\mathcal{G}_2$ is not given explicitly in terms of elements of $\mathcal{G}_2$ by relation (C) but
repeated applications of relation (A) and (B) reduce the result of (C) to a linear combination
of elements of $\mathcal{G}_2$. \end{proof}

\begin{rem}
Lemma \ref{eqn:module.structure} and the proof of Theorem \ref{thm:second.set}
describe the $KO^*$-algebra structure of $KO^*(BT^m)$. In particular, as noted in Section
\ref{sec:introduction},  this result and Theorem \ref{thm:ko.prod.proj.basis.1} both
describe the completion of the representation ring $RO(T^m)$ at the augmentation
ideal.
\end{rem}

\begin{prop}
No finite relations exist among the elements of $\mathcal{G}_2$.
\end{prop}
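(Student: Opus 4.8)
The plan is to reduce to $\mathbb{Z}$-linear independence inside $KU^{*}(BT^m)$ and then settle that by passing to symmetric coordinates. A finite relation among the elements of $\mathcal{G}_2$ is automatically homogeneous, hence supported in a single group $KO^{-2s}(BT^m)$ with $s$ fixed; write $\epsilon=(-1)^s$. All the classes involved lie in even degree, and (apart from the coefficient class $[0,0]^{(s)}$, which equals $r(v^s)=0$ when $s$ is odd and is simply discarded) in the reduced, torsion-free part of $KO^{*}(BT^m)$ described by Theorem \ref{thm:main.theorem}; there the complexification map $c$ is injective. So it suffices to show that the images $c([I,J]^{(s)})=v^{s}\bigl(x^{I}\overline{x}^{J}+(-1)^{s}\overline{x}^{I}x^{J}\bigr)$, over the $(I,J)\in\mathcal{G}_2$ of the fixed degree, are independent in $KU^{*}(BT^m)$. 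Dividing by the unit $v^{s}$ turns this into independence of the symmetrized monomials $\phi_{I,J}:=x^{I}\overline{x}^{J}+\epsilon\,\overline{x}^{I}x^{J}$ in $KU^{0}(BT^m)=\mathbb{Z}[[x_1,\dots,x_m]]$, each of which lies in the $\epsilon$-eigenspace of conjugation $\sigma$.

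Since $\mathbb{Q}$-independence implies $\mathbb{Z}$-independence, I would extend scalars to $\mathbb{Q}$ and change coordinates. Put $t_i=\log(1+x_i)$, so that $\sigma$ acts by $t_i\mapsto-t_i$, and set $p_i=x_i+\overline{x}_i$ (even, $t$-valuation $2$) and $d_i=x_i-\overline{x}_i$ (odd, $t$-valuation $1$); then $d_i^2=p_i^2+4p_i$, and $KU^{0}(BT^m)\otimes\mathbb{Q}$ is free over $\mathbb{Q}[[p_1,\dots,p_m]]$ on the square-free monomials $\prod_{i\in S}d_i$, with $\sigma$ acting on the $S$-summand by $(-1)^{|S|}$. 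The per-variable power-sum identities $x_k^{n}+\overline{x}_k^{n}=A_n(p_k)$, with $A_n$ monic of degree $n$, and $x_k^{n}-\overline{x}_k^{n}=d_k\,B_n(p_k)$, with $B_n$ monic of degree $n-1$, let me expand $\phi_{I,J}$ explicitly. Writing $K=I+J$, $W=\mathrm{supp}(K)$, $n_k=K_k$, and recording the $x/\overline{x}$-choice by signs $\eta(k)=\pm1$ (forced to $+$ at $\min W$ by the normalization defining $\mathcal{G}_2$), the component of $\phi_{I,J}$ in the $\prod_{k\in S}d_k$-summand is, up to a fixed nonzero constant, $\bigl(\prod_{k\in S}\eta(k)\bigr)\prod_{k\in S}B_{n_k}(p_k)\prod_{k\in W\setminus S}A_{n_k}(p_k)$ when $S\subseteq W$ and $|S|\equiv s\pmod 2$, and $0$ otherwise.

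To finish, I would feed a hypothetical relation $\sum_{(K,\eta)}c_{K,\eta}\phi_{I,J}=0$ into this decomposition one $d$-support at a time. Projecting onto the $\prod_{k\in S_0}d_k$-summand yields, for each admissible $S_0$, an identity in $\mathbb{Q}[[p]]$ among the polynomials $\prod_{k\in S_0}B_{n_k}(p_k)\prod_{k\in W\setminus S_0}A_{n_k}(p_k)$. These have leading monomial $\prod_{k\in S_0}p_k^{n_k-1}\prod_{k\in W\setminus S_0}p_k^{n_k}$, and since that monomial determines $K$ once $S_0$ is fixed, the polynomials attached to distinct $K$ are triangular, hence linearly independent. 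This extracts, for every $K$ and every admissible $S_0\subseteq W$, the scalar equation $\sum_{\eta}c_{K,\eta}\prod_{k\in S_0}\eta(k)=0$. For fixed $K$ the coefficient matrix $\bigl(\prod_{k\in S_0}\eta(k)\bigr)_{\eta,S_0}$ --- rows the sign-functions with $\eta(\min W)=+$, columns the subsets $S_0$ of the correct parity --- is precisely the character table of $(\mathbb{Z}/2)^{|W|-1}$, a Hadamard matrix; being invertible it forces every $c_{K,\eta}=0$.

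The crux, and the step I expect to be the main obstacle, is this final separation of the $x/\overline{x}$-choices: two elements of $\mathcal{G}_2$ sharing the same $K$ agree to leading order in the $p$-filtration, because the sign data $\eta$ resides only in the lower-order, $d$-carrying part, so no naive leading-term argument can distinguish them. Arranging the expansion so that the character-table (Hadamard) invertibility performs this separation, while keeping the blocks belonging to different $K$ triangular, is where the genuine work lies; the remaining points --- extending scalars to $\mathbb{Q}$ and discarding the degenerate class $[0,0]^{(s)}$ for odd $s$ --- are routine.
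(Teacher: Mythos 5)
Your proof is correct, and it takes a genuinely different route from the paper's. The paper stays on the $KO$ side: for $m=1$ it notes that a finite relation yields $r(p(x_1))=0$ with $p$ a polynomial, uses exactness of the Bott sequence (\ref{eqn:bott.seq}) to identify $\ker r$ with the antisymmetric elements $\theta(x_1)-\theta(\overline{x}_1)$, declares it straightforward that no nonzero polynomial has this form, and then asserts that the case $m>1$ ``reduces easily'' to $m=1$. You instead push everything into $KU$ by applying $c$ (via $(c\circ r)(y)=y+\overline{y}$; note that injectivity of $c$ is not even needed for this direction) and prove directly, for all $m$ at once, that the symmetrized monomials $x^{I}\overline{x}^{J}+(-1)^{s}\overline{x}^{I}x^{J}$ indexed by $\mathcal{G}_2$ are linearly independent over $\mathbb{Q}$, using the conjugation-eigenspace decomposition $\mathbb{Q}[[x_1,\ldots,x_m]]=\bigoplus_{S}\bigl(\prod_{i\in S}d_i\bigr)\mathbb{Q}[[p_1,\ldots,p_m]]$, triangularity of the products of the monic polynomials $A_n$, $B_n$ (separating distinct exponent vectors $K=I+J$), and invertibility of the character table of $(\mathbb{Z}/2)^{|W|-1}$ (separating the $x$-versus-$\overline{x}$ assignments $\eta$ that share the same $K$). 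The two reductions are two faces of the same fact --- vanishing of $r$ is equivalent to vanishing of the symmetrization --- but the payoffs differ: the paper's argument is short, while yours is complete, supplying precisely the two steps the paper leaves unproved (the one-variable check and the multivariable reduction), and your Hadamard step addresses the real subtlety, namely that generators with the same $K$ cannot be distinguished by leading terms or by restriction to a single factor, which is exactly where a casual reduction to $m=1$ is at risk of breaking down. You are also more careful than the paper on a boundary point: $[0,0]^{(s)}=r(v^{s})$ is literally $0$ when $s$ is odd, so it must be discarded (as you do) for the statement to hold. Two harmless imprecisions: your ``fixed nonzero constant'' in the $d$-support expansion is $2^{1-|W|}$, which depends on $K$ but is absorbed into the coefficients for fixed $K$; and the coefficient matrix is a character table only after observing that parity-$s$ subsets of $W$ biject with the characters of $\{\eta:\eta(\min W)=+1\}$ --- both checks are immediate.
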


\begin{proof}
For $m = 1$, a finite relation among elements of $\mathcal{G}_2$
would produces a relation of the form $r(p(x_1)) = 0$ where $p(x_1)$ is a polynomial.
The Bott sequence implies then that a formal power series $\theta(x_1)$ exists 
in $KU^0(CP^\infty)$ satisfying
$$\theta(x_1) - \theta(\overline{x}_1) = p(x_1).$$

\nd It is straightforward to check that no such relation can occur in $KU^0(CP^\infty)$. The case
$m >1$ reduces easily to the case $m = 1$. \end{proof}

\

On the other hand, infinite relations do occur in $KO^{-2s}(BT^m)$ among the generators $\mathcal{G}_2$.  
Consider a relation of the form
\begin{equation}\label{eqn:relation}
\sum_{k=0}^{\infty}a_{k}[I_k,J_k]^{(s)} = 0, \quad a_k \in \mathbb{Z},\quad  
[I_k,J_k]^{(s)} \in \mathcal{G}_2 \end{equation}

\nd When applied to (\ref{eqn:relation}), the complexification monomorphism produces a relation among 
the  generators $x_1,\ldots,x_m,\overline{x}_{1},\ldots,\overline{x}_{m}\;$   in $KU^{-2s}(BT^m)$     
which must then be a consequence of the relations
$$x_{i}\overline{x}_{i} + x_i + \overline{x}_{i} = 0 \quad i = 1, \ldots,m.$$

\nd For $m=1$, there is the example 

\begin{equation}\label{eqn:relation}
2[1,0] + \sum_{n = 2}^{\infty}(-1)^{n-1}[n,0]  = 0,\end{equation}

\nd which is just the realification map $r$  applied to the relation

$$\overline{x}_{1} =  - \frac{x_1}{1+x_1} =  - x_{1} + x_{1}^2 - x_{1}^3 + x_{1}^4 + \ldots$$

\nd in $KU^{0}(BT^1)$.

\begin{remark}
The two generating sets $\mathcal{G}_1$ and $\mathcal{G}_2$, 
described in Corollary \ref{cor:ko.prod.proj.basis.1} and (\ref{eqn:g2})
respectively, are distinguished as follows. Although in both cases, an infinite sum of allowable
generators will become finite under any restriction
\begin{equation}
KO^{*}(\prod_{i=1}^{m}{\mathbb{C}P^{\infty}}) \longrightarrow 
KO^{*}\big( \mathbb{C}P^{k_1} \times  \mathbb{C}P^{k_2} \times \cdots \times
\mathbb{C}P^{k_m}\big),
\end{equation}
relations of the type (\ref{eqn:relation}) in the generators $\mathcal{G}_2$ will produce
{\em finite\/} linear relations.
\end{remark}

\skp{0.3}

\section{The Davis-Januszkiewicz spaces}\label{sec:dj}
\subsection{The Davis-Januszkiewicz space associated to a simplicial complex}
\label{subsec:dj.intro} In Section
\ref{sec:introduction}, the Davis-Januszkiewicz space $\mathcal D\mathcal J(K_P)$, 
associated to simple polytope $P$, was defined in terms of a toric manifold $M^{2n}$.
More generally, a Davis-Januszkiewicz space  $\mathcal D\mathcal J(K)$ can be 
constructed for any simplicial complex $K$ by means of the generalized 
moment-angle complex construction 
$Z(K;(X,A))$ of
\cite{davis.jan}, \cite{buchstaber.panov.2}, \cite{denham.suciu} and \cite{bbcg}. 
A  description of  the space $\mathcal D\mathcal J(K)$ follows.

\nd \begin{defin}\label{dj.definition} 
Let $K$ be a simplicial complex with $m$ vertices. Identify simplices 
$\sigma \in K$ as increasing subsequences of $[m] = (1,2,3,\ldots,m)$.
The Davis-Januszkiewicz space  
$\mathcal D\mathcal J(K)$ is defined by
$$\mathcal D\mathcal J(K) = Z(K; (\mathbb{C}P^\infty, \ast)) \; \subseteq \; 
BT^m = \prod_{i=1}^{m}{\mathbb{C}P^{\infty}}$$

\nd where $\ast$ represents the basepoint and

$$Z(K; (\mathbb{C}P^\infty, \ast)) \;=\; \bigcup_{\sigma \in K}D(\sigma)$$

\nd with

\begin{equation}\label{eqn:esigma} 
D(\sigma) =\prod^m_{i=1}W_i \;,\quad {\rm where}\quad
W_i=\left\{\begin{array}{lcl}
\mathbb{C}P^\infty  &{\rm if} & i\in \sigma\\
\ast &{\rm if} & i\in [m]-\sigma.
\end{array}\right.\end{equation}
\end{defin}

\

A toric manifold $M^{2n}$ is specified by a simple $n$-dimensional polytope  and a 
{\em characteristic function\/} on its facets as described in  \cite{davis.jan}.
Equivalently,  $M^{2n}$ can be realized as a quotient. The characteristic function
corresponds to a specific choice of sub-torus $T^{m-n} \subseteq T^m$ which acts 
freely on the moment-angle complex $Z(K_P;(D^2,S^1))$ to give

$$M^{2n}\; \cong \; Z(K_P;(D^2,S^1))\big/T^{m-n}.$$

\

\nd This description of $M^{2n}$ yields an equivalence of Borel constructions

\begin{multline}\label{eqn:tmdj}
ET^m \times_{T^m}  Z(K_P;(D^2,S^1)) \\ \simeq  
ET^n \times_{T^n} \big(Z(K_P;(D^2,S^1))/T^{m-n}\big) \cong 
ET^n \times_{T^n} M^{2n} = \mathcal{DJ}(K_P).\end{multline}

\

\nd Moreover, for any simplicial complex $K$, there is an equivalence
(\cite{davis.jan}, \cite{buchstaber.panov.2} and \cite{denham.suciu}),

\begin{equation}\label{eqn:ds}
ET^m \times_{T^m}  Z(K;(D^2,S^1))  \; \cong \; 
Z(K; (\mathbb{C}P^\infty, \ast)).\end{equation}

\nd It follows that for $K = K_P$, the three descriptions of $\mathcal D\mathcal J(K_P)$
given by (\ref{eqn:tmdj}) and (\ref{eqn:ds}) agree up to homotopy equivalence


\

\subsection{The $KO^*$-rings of the Davis-Januszkiewicz spaces}\label{section:kodj}
It is well known that (\ref{eqn:dj.definition}) extends to $\mathcal D\mathcal J(K)$ and so,
for any complex-oriented cohomology theory $E^*$

\begin{equation}\label{eqn:dj.definition.2}
E^*(\mathcal D\mathcal J(K)) \;\cong\; E^*(BT^m)\big/I_{SR}^E \end{equation}

\

\nd where $I_{SR}^E$ is the Stanley-Reisner ideal described in Section
\ref{sec:introduction}. A related but more general result can be found in \cite{bbcg},
Theorem 2.35. Also from \cite{bbcg}, the following geometric results  will prove
useful for the computation of $KO^{*}(\mathcal D\mathcal J(K))$ in this section. Below, 
increasing subsequences of $[m] = (1,2,3,\ldots,m)$ are denoted by
$\sigma = (i_1,i_2,\ldots,i_k), \;\tau = (i_1,i_2,\ldots,i_t)$ and $\omega = (i_1,i_2,\ldots,i_s)$
and $X_{i_j} = \mathbb{C}P^\infty$ for all $i_j$.

\begin{thm}\label{thm:dj.split} The Davis-Januszkiewicz space  $\mathcal D\mathcal J(K)$ 
decomposes stably as follows. 

$$\Sigma\big(\mathcal D\mathcal J(K)\big)\;  \stackrel{\simeq}{\longrightarrow}\; 
\Sigma\big(\bigvee_{\sigma \in K} X_{i_1}\wedge X_{i_2} \wedge \ldots \wedge X_{i_k}\big).$$

\

\nd Moreover, there is a cofibration sequence 

$$\Sigma\big(\mathcal D\mathcal J(K)\big)
 \stackrel{i}{\longrightarrow}\
\Sigma\big(\bigvee_{\tau \in [m]} X_{i_1}\wedge X_{i_2} \wedge \ldots \wedge X_{i_t}\big)
 \stackrel{q}{\longrightarrow}\
\Sigma\big(\bigvee_{\omega \notin K} X_{i_1}\wedge X_{i_2} \wedge \ldots \wedge X_{i_s}\big),$$
\end{thm}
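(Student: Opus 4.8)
The plan is to recognize this statement as the specialization of the Bahri--Bendersky--Cohen--Gitler stable decomposition of polyhedral products \cite{bbcg} to the pair $(\mathbb{C}P^\infty,\ast)$, and to reconstruct it from the James splitting of a product together with the naturality of that splitting under the defining inclusion $\mathcal D\mathcal J(K)=Z(K;(\mathbb{C}P^\infty,\ast))\hookrightarrow BT^m$. The essential input is the equivalence (\ref{eqn:split.product}) applied to $Y_i=\mathbb{C}P^\infty$, which gives a natural equivalence $\Sigma BT^m\simeq\Sigma\bigvee_{\tau\subseteq[m]}\bigwedge_{i\in\tau}X_i$, the wedge running over the nonempty subsets $\tau$ of $[m]$; here $BT^m=Z(\Delta;(\mathbb{C}P^\infty,\ast))$ for $\Delta$ the full simplex on $[m]$.

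First I would treat the splitting. Since each $X_i=\mathbb{C}P^\infty$ is a CW complex with $\ast$ a $0$-cell, $Z(K;(\mathbb{C}P^\infty,\ast))$ is a CW subcomplex of $BT^m$, and I would compose its inclusion $\iota$ with the James projections $\pi_S\colon\Sigma BT^m\to\Sigma\bigwedge_{i\in S}X_i$. The key observation is that $\pi_S$ collapses every coordinate not in $S$ and then smashes, so on the piece $D(\sigma)$, where the coordinates outside $\sigma$ sit at the basepoint, the composite $\pi_S\circ\Sigma\iota$ is null unless $S\subseteq\sigma$. Because $K$ is closed under passage to subsets, a face $\sigma\in K$ with $S\subseteq\sigma$ exists precisely when $S\in K$; hence the $S$-component of $\Sigma Z(K)$ is trivial for $S\notin K$ and is carried by $D(S)$ onto all of $\bigwedge_{i\in S}X_i$ for $S\in K$. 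Thus $\Sigma\iota$ factors, up to homotopy, through the face-indexed sub-wedge, producing a map $\Sigma\mathcal D\mathcal J(K)\to\Sigma\bigvee_{\sigma\in K}\bigwedge_{i\in\sigma}X_i$.

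To see this map is an equivalence I would compare reduced homology via the cellular structure. A positive-dimensional product cell of $BT^m$ lying in $Z(K;(\mathbb{C}P^\infty,\ast))$ is non-basepoint exactly in the coordinates of some $S\in K$, so the cellular chains of $Z(K;(\mathbb{C}P^\infty,\ast))$ coincide with those of $\bigvee_{\sigma\in K}\bigwedge_{i\in\sigma}X_i$, giving $\widetilde H_*(Z(K;(\mathbb{C}P^\infty,\ast)))\cong\bigoplus_{\sigma\in K}\widetilde H_*(\bigwedge_{i\in\sigma}X_i)$; the map above realizes this isomorphism summand by summand. As every space in sight is simply connected, a homology isomorphism between suspensions is a homotopy equivalence by Whitehead, which yields the asserted stable splitting.

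The cofibration sequence is then formal. By naturality of (\ref{eqn:split.product}) with respect to $\iota\colon Z(K;(\mathbb{C}P^\infty,\ast))\hookrightarrow Z(\Delta;(\mathbb{C}P^\infty,\ast))=BT^m$, the map $i=\Sigma\iota$ is identified with the inclusion of the sub-wedge $\bigvee_{\sigma\in K}$ into the full wedge $\bigvee_{\tau\subseteq[m]}$ of the James splitting. The cofiber of a sub-wedge inclusion of suspensions is the complementary wedge, so the cofiber of $i$ is $\Sigma\bigvee_{\omega\notin K}\bigwedge_{i\in\omega}X_i$, with $q$ the evident collapse. I expect the only delicate point to be the coherence of the naturality claims: that the James projections can be chosen compatibly so that $\Sigma\iota$ genuinely lands, up to homotopy, in the face-indexed sub-wedge, and that the restricted map is the homology \emph{isomorphism} above rather than merely a monomorphism. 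Once these are pinned down, the remainder is bookkeeping.
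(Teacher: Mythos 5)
The paper contains no proof of this theorem to compare against: it is imported verbatim from \cite{bbcg}, which the text cites just before the statement. Your reconstruction is correct, and it is essentially the argument that \cite{bbcg} gives once specialized to the based pair $(\mathbb{C}P^\infty,\ast)$; it also matches the paper's own toolkit, since the splitting (\ref{eqn:split.product}) of $\Sigma(Y_1\times\cdots\times Y_m)$ is exactly what the paper invokes in Section \ref{sec:ring.structure} to pass from Theorem \ref{thm:ko.of.smash} to Theorem \ref{thm:ko.prod.proj.basis.1}. One suggestion on the point you flag as delicate: rather than arguing that the composite $\Sigma Z(K;(\mathbb{C}P^\infty,\ast))\to\Sigma BT^m\simeq\bigvee_S\Sigma\widehat{X}^S$ factors through the sub-wedge (maps into a wedge are not determined by their components, so this needs care), build the map directly. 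For each $\sigma\in K$ take the projection $p_\sigma\colon Z(K;(\mathbb{C}P^\infty,\ast))\to\widehat{X}^\sigma$ that collapses the coordinates outside $\sigma$ and smashes, and form $\sum_{\sigma\in K}\Sigma p_\sigma$ using the co-H structure of the suspension; your observation that $p_S$ is \emph{strictly} constant whenever $S\notin K$ (every point lies in some $D(\sigma)$ with $\sigma\in K$, and $S\not\subseteq\sigma$ forces a basepoint coordinate in $S$) then identifies this map with the composite of $\Sigma\iota$ and the full-wedge splitting, which is what the cofibration statement and the splitting of $i$ require. Your cellular argument finishes the job as written: cells of $BT^m$ are indexed by their support $S$, lie in $Z(K;(\mathbb{C}P^\infty,\ast))$ exactly when $S\in K$, all differentials vanish for parity reasons, $(p_\sigma)_*$ is projection onto the support-$\sigma$ summand, and Whitehead applies because all spaces involved are simply connected CW complexes.
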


\nd where the map $i$ is split.

\

A particular case of (\ref{eqn:dj.definition.2}) is given by $E^* = KU^*$, so

\begin{equation}\label{eqn:ku.dj}
KU^*(\mathcal D\mathcal J(K)) \;\cong\; KU^*(BT^m)\big/I_{SR}^{KU}\end{equation}

\

\begin{remark}\label{rem:conjugates}
Notice that in the representation of $KU^{0}(BT^m)$ given in (\ref{eqn:conjugates}), 
the monomials generating the ideal $I_{SR}^{KU}$ could equally well contain a generator
$x_i$ or its conjugate $\overline{x}_i$.
\end{remark}

\

Theorem \ref{thm:dj.split} and the results of section \ref{sect:basic.calculation}
imply that $KO^*(\mathcal D\mathcal J(K))$ is concentrated in even degrees. The 
Bott sequence (\ref{eqn:bott.seq}) implies then that the realification map

$$r\colon KU^{*}(\mathcal{DJ}(K)) \longrightarrow KO^{*}(\mathcal{DJ}(K))$$

\

\nd is onto  and that the complexification map

$$c\colon KO^{*}(\mathcal{DJ}(K)) \longrightarrow KU^{*}(\mathcal{DJ}(K))$$

\

\nd is a monomorphism. The goal of the remainder of this section is to use the generators 
$\mathcal{G}_2$
of Section \ref{sec:ijays} to describe the ring  $KO^{*}(\mathcal{DJ}(K))$.  

\

Let $K$ be a simplicial complex on $m$ vertices. For $I = (i_1,i_2,\ldots,i_m)$ as in Section
\ref{sec:ijays}, set

$$\epsilon(I) = \{k : i_k \neq 0\} \subseteq [m]$$ 

\

\nd and let $SR_{KO}$ denote the ideal in $KO^{*}(BT^m)$ generated
by the set

\begin{equation}\label{eqn:ko.ideal}
\{[I,J]^{(s)} \in \mathcal{G}_2 : \epsilon(I) \cup \epsilon(J) \notin K\},\end{equation}

\

\nd where again, simplices of $K$ are identified as increasing subsequences of 
$[m] = (1,2,3,\ldots,m)$. The notation $SR_{KO}$ for the
$KO$ Stanley-Reisner ideal is more appropriate than
$I_{SR}^{KO}$ as it is structurally different from that for a 
complex-oriented theory. Next, the ideal $SR_{KO}$ is related to $r(I^{KU}_{SR})$.
The non-multiplicativity of the map $r$ makes necessary a preliminary lemma.

\begin{lem}\label{lem:ideal}
The abelian group $r(I^{KU}_{SR})$ is an ideal in $KO^*(BT^m)$.
\end{lem}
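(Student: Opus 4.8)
The plan is to separate the two requirements for $r(I^{KU}_{SR})$ to be an ideal: that it be an additive subgroup, and that it be closed under multiplication by elements of $KO^*(BT^m)$. The first is immediate, since $r$ is additive and $I^{KU}_{SR}$ is an additive subgroup of $KU^*(BT^m)$, so its image under $r$ is a subgroup. All the content lies in the multiplicative closure, and here the difficulty is precisely that $r$ is \emph{not} a ring homomorphism, so one cannot simply push the ideal property of $I^{KU}_{SR}$ through $r$.

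To get around this, the key tool will be the projection formula (Frobenius reciprocity) for realification, namely that $r$ is a homomorphism of $KO^*(BT^m)$-modules: for every $x \in KO^*(BT^m)$ and $a \in KU^*(BT^m)$,
$$ x \cdot r(a) \;=\; r\bigl(c(x)\cdot a\bigr). $$
First I would establish this identity. Since $c$ is injective (by the Bott sequence \eqref{eqn:bott.seq}) it suffices to check it after applying $c$. Using that $c$ is a ring homomorphism together with $(c\circ r)(a) = a + \overline{a}$ from \eqref{eqn:r.and.c}, the left-hand side becomes $c(x)(a+\overline{a})$, while the right-hand side becomes $c(x)\,a + \overline{c(x)}\,\overline{a}$. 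The two agree because complexified classes are conjugation-invariant, $\overline{c(x)} = c(x)$, so both sides equal $c(x)(a+\overline{a})$.

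With the projection formula in hand, the closure is a one-line verification. Given any $w \in KO^*(BT^m)$ and any $y \in r(I^{KU}_{SR})$, write $y = r(a)$ with $a \in I^{KU}_{SR}$. Then $w\cdot y = w\cdot r(a) = r\bigl(c(w)\, a\bigr)$. Since $I^{KU}_{SR}$ is an ideal in $KU^*(BT^m)$ and $c(w) \in KU^*(BT^m)$, the product $c(w)\,a$ again lies in $I^{KU}_{SR}$, whence $w\cdot y \in r(I^{KU}_{SR})$. As $KO^*(BT^m)$ is commutative, this single-sided closure suffices.

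I expect the only real obstacle to be the justification of the projection formula, and within it the self-conjugacy $\overline{c(x)} = c(x)$; everything else is formal. If one preferred to avoid invoking the module structure abstractly, an alternative route is to compute products of realified classes directly through relation (C) of Theorem \ref{thm:ij.relations}, which already expresses a product $r(\cdot)\cdot r(\cdot)$ as a sum of realified monomials; but the projection formula gives the cleanest and most conceptual argument, so that is the route I would take.
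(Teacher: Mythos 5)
Your proof is correct, but it takes a genuinely different route from the paper. You reduce everything to the projection formula $x\cdot r(a) = r(c(x)\,a)$, i.e.\ to the statement that $r$ is a homomorphism of $KO^*(BT^m)$-modules, verified by applying the injective map $c$ and using $(c\circ r)(a)=a+\overline{a}$ together with the conjugation-invariance $\overline{c(x)}=c(x)$ (a standard fact, which you assert rather than prove, but which is legitimate: conjugation fixes the image of complexification). This argument is purely algebraic, needs nothing about $\mathcal{DJ}(K)$, and in fact works verbatim for the image under $r$ of \emph{any} ideal of $KU^*(X)$ for any $X$ with $c$ injective. The paper instead proves the lemma topologically: it uses the split cofibration of Theorem \ref{thm:dj.split} to build the commutative diagram (\ref{eqn:dj.cd}) and runs a diagram chase showing that $r(I_{SR}^{KU})$ is precisely the kernel of the ring homomorphism $i^*\colon \widetilde{KO}^*(BT^m)\to\widetilde{KO}^*(\mathcal{DJ}(K))$, whence it is an ideal. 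The trade-off is worth noting: the paper's heavier argument does double duty, since the identification $r(I_{SR}^{KU})=\ker i^*$ is exactly what the proof of Theorem \ref{thm:ko.of.dj} later invokes (together with Proposition \ref{prop:ideals.equal}) to conclude $KO^*(\mathcal{DJ}(K))\cong KO^*(BT^m)/SR_{KO}$. Your projection-formula argument settles the lemma more cleanly and more generally, but if it replaced the paper's proof, the diagram chase identifying the kernel of $i^*$ would still have to be carried out somewhere before Theorem \ref{thm:ko.of.dj} could be proved.
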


\begin{proof} With reference to the notation of Theorem \ref{thm:dj.split}, set 

$$\widehat{X}^\tau =  X_{i_1}\wedge X_{i_2} \wedge \ldots \wedge X_{i_t} \quad
\text{and} \quad \widehat{X}^\omega =  X_{i_1}\wedge X_{i_2} \wedge \ldots \wedge X_{i_s}.$$

\

\nd Recall here that each $X_{i_j} = \mathbb{C}P^\infty$. The split cofibration of 
Theorem \ref{thm:dj.split} gives rise to the diagram following

\begin{equation}\label{eqn:dj.cd}
\begin{CD}
\widetilde{KU}^{-2s}\big(\mathcal{DJ}(K)\big) @<{i^{*}}<<
\widetilde{KU}^{-2s}\big(\bigvee\limits_{\tau \in [m]}\widehat{X}^\tau\big) @<{q^{*}}<<  
\widetilde{KU}^{-2s}\big(\bigvee\limits_{\omega \notin K}\widehat{X}^\omega\big)\\
\\
@VV{r}V              @VV{r}V            @VV{r}V \\
\\
\widetilde{KO}^{-2s}\big(\mathcal{DJ}(K)\big) @<{i^{*}}<<
\widetilde{KO}^{-2s}\big(\bigvee\limits_{\tau \in [m]}\widehat{X}^\tau\big) @<{q^{*}}<<  
\widetilde{KO}^{-2s}\big(\bigvee\limits_{\omega \notin K}\widehat{X}^\omega\big).\\
\end{CD}
\end{equation}

\

\nd The maps $i^*$ are onto and so 
$\widetilde{KO}^{*}\big(\mathcal{DJ}(K)\big)$ is a quotient of $\widetilde{KO}^{*}(BT^{m})$. A diagram chase is
needed next. Let $x \in \widetilde{KO}^{-2s}\big(\bigvee\limits_{\tau \in [m]}\widehat{X}^\tau\big)$ be such
that $i^*(x) = 0$.
Then, $y \in \widetilde{KO}^{-2s}\big(\bigvee\limits_{\omega \notin K}\widehat{X}^\omega\big)$ exists
satisfying $q^*(y) = x$. Since $r$ is onto, 
$z \in \widetilde{KU}^{-2s}\big(\bigvee\limits_{\omega \notin K}\widehat{X}^\omega\big)$ exists with
$r(z) = y.$ Then

$$r(q^*(z)) = q^*(r(z)) = x.$$

\nd Now $q^*(z) \in I_{SR}^{KU}$ which implies that $x \in r(I_{SR}^{KU})$. 
Conversely, the commutativity of the left-hand half of (\ref{eqn:dj.cd}) implies
that if $x \in r(I_{SR}^{KU})$ then $i^*(x) = 0$. Hence $r(I_{SR}^{KU})$ is the
kernel of the map
\begin{equation}\label{eqn:kernel}
i^* \colon \widetilde{KO}^*(BT^m) \longrightarrow \widetilde{KO}^{*}(\mathcal{DJ}(K)).
\end{equation}

\nd In particular,  $r(I^{KU}_{SR})$ is an ideal in $KO^*(BT^m)$ as required.
\end{proof}

\

The next proposition  allows a characterization of this important ideal in terms of the condition 
(\ref{eqn:ko.ideal}).

\begin{prop}\label{prop:ideals.equal}
As ideals in $KO^*(BT^m)$ 
$$ r(I_{SR}^{KU}) \; = \; SR_{KO}.$$
\end{prop}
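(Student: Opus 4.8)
The plan is to establish the two inclusions separately, relying throughout on Lemma \ref{lem:ideal} (so that $r(I_{SR}^{KU})$ is already known to be an ideal) and on the defining property of a simplicial complex: any superset of a non-simplex is again a non-simplex. This last fact is what forces the support condition $\epsilon(I)\cup\epsilon(J)\notin K$ to propagate correctly.

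For the inclusion $SR_{KO}\subseteq r(I_{SR}^{KU})$, I would begin with a single generator $[I,J]^{(s)}\in\mathcal{G}_2$ having $\epsilon(I)\cup\epsilon(J)\notin K$. Since this element lies in $\mathcal{G}_2$ we have $I\cdot J=0$, so $\epsilon(I)$ and $\epsilon(J)$ are disjoint and the monomial $v^{s}x^{I}\overline{x}^{J}$ is divisible by the square-free monomial $\prod_{k\in\epsilon(I)}x_k\prod_{k\in\epsilon(J)}\overline{x}_k$, whose index set is the non-simplex $\epsilon(I)\cup\epsilon(J)$. By Remark \ref{rem:conjugates} this square-free monomial is one of the generators of $I_{SR}^{KU}$, so $v^{s}x^{I}\overline{x}^{J}\in I_{SR}^{KU}$ and therefore $[I,J]^{(s)}=r(v^{s}x^{I}\overline{x}^{J})\in r(I_{SR}^{KU})$. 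As $r(I_{SR}^{KU})$ is an ideal, it contains the entire ideal $SR_{KO}$ generated by these elements.

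The reverse inclusion is where the non-multiplicativity of $r$ becomes the central obstacle, so I cannot track ideal generators multiplicatively and must instead argue monomial-by-monomial. I would write a typical element of $I_{SR}^{KU}$ as $\sum_{\omega}f_{\omega}\mu_{\omega}$, where each $\mu_{\omega}$ is a square-free Stanley-Reisner generator supported on a non-simplex $\omega\notin K$ and $f_{\omega}\in KU^{*}(BT^{m})$. Expanding each $f_{\omega}\mu_{\omega}$ in normal form as a power series in $v,x_i,\overline{x}_i$, I note that the defining relation $x_i\overline{x}_i+x_i+\overline{x}_i=0$ has every term supported on $i$, so multiplying by $f_\omega$ and reducing never kills an active index; hence every monomial $v^{s}x^{I}\overline{x}^{J}$ that survives has $\epsilon(I)\cup\epsilon(J)\supseteq\omega$, which is therefore again a non-simplex. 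Applying the additive map $r$ term by term, it then remains only to show that $[I,J]^{(s)}\in SR_{KO}$ whenever $\epsilon(I)\cup\epsilon(J)\notin K$, even when $I\cdot J\neq0$ and $[I,J]^{(s)}$ is not itself a chosen generator.

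The crux is to reduce such an $[I,J]^{(s)}$ to $\mathcal{G}_2$-form using relations (A) and (B) of Theorem \ref{thm:ij.relations} while verifying that the support $\epsilon(I)\cup\epsilon(J)$ is preserved at every step. Relation (A) merely interchanges $I$ and $J$ and so leaves the support unchanged. In relation (B) the reduced index $k$ satisfies $i_k,j_k>0$; in the term $[I',J]^{(s)}$ the entry $j_k$ is untouched, so $k\in\epsilon(J)$, while in $[I,J']^{(s)}$ the entry $i_k$ is untouched, so $k\in\epsilon(I)$, and hence both terms retain support exactly $\epsilon(I)\cup\epsilon(J)$. Iterating until $I\cdot J=0$ expresses $[I,J]^{(s)}$ as a finite $\mathbb{Z}$-linear combination of $\mathcal{G}_2$-generators all supported on the same non-simplex $\epsilon(I)\cup\epsilon(J)$, hence lying in $SR_{KO}$. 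The only remaining subtlety is convergence: since $r$ is continuous for the profinite filtration topology and $SR_{KO}$ is closed in it, the infinite sum $r\bigl(\sum_{\omega}f_{\omega}\mu_{\omega}\bigr)$ remains in $SR_{KO}$, giving $r(I_{SR}^{KU})\subseteq SR_{KO}$ and hence equality. I expect the support-preservation check for relation (B) to be the essential point, with the continuity of $r$ the only genuinely analytic ingredient.
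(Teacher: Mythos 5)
Your proof is correct and follows essentially the same route as the paper's: the inclusion $SR_{KO}\subseteq r(I_{SR}^{KU})$ is obtained by exhibiting each $\mathcal{G}_2$-generator with non-simplex support as $r$ of a monomial divisible by a (conjugate-allowing, per Remark \ref{rem:conjugates}) Stanley--Reisner generator and invoking Lemma \ref{lem:ideal}, while the reverse inclusion comes from expanding elements of $I_{SR}^{KU}$ into monomials, applying $r$ term by term, and observing that the non-simplex support persists. The only difference is that you spell out two points the paper leaves implicit, namely that relations (A) and (B) preserve the support $\epsilon(I)\cup\epsilon(J)$ when reducing an arbitrary $[I,J]^{(s)}$ to $\mathcal{G}_2$-form, and that the formal infinite sums stay inside $SR_{KO}$; both checks are sound and strengthen the argument.
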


\begin{proof}
Let $[I,J]^{(s)} \in SR_{KO}$. Since $\epsilon(I) \cup \epsilon(J) \notin K$,
\begin{equation}\label{eqn:monomial}
[I,J]^{(s)}\; =\;  r\big(v^{s}y_{\alpha_1}y_{\alpha_2}\ldots y_{\alpha_k} \mathfrak{m}\big)
\end{equation}

\nd where, in the light of Remark \ref{rem:conjugates}, each $y_{\alpha_j} = x_{\alpha_j}$ or $\overline{x}_{{\alpha_j}}\,$, $\;\{\alpha_1,\alpha_2,\ldots,\alpha_k\} \notin K$
and $\mathfrak{m}$ is a monomial in the classes 
$x_1,\ldots,x_m, \overline{x}_{1},\ldots,\overline{x}_{m}$. (Notice here that the choice of 
${\alpha_1},{\alpha_2}, \ldots, {\alpha_k}$ in (\ref{eqn:monomial}) may not be unique.) 
Now $v^{s}y_{\alpha_1}y_{\alpha_2}\ldots y_{\alpha_k} \mathfrak{m} \in I_{SR}^{KU}$ and so 
$[I,J]^{(s)} \in \; r(I_{SR}^{KU}).$ Conversely,  an element in 
$ r(I_{SR}^{KU})$ is a $KO^*$-sum of elements each of the form
$r\big(v^{s}y_{\alpha_1}y_{\alpha_2}\ldots y_{\alpha_k} \mathfrak{n}\big)$ again with
each $y_{\alpha_j} = x_{\alpha_j}$ or $\overline{x}_{{\alpha_j}}\,$, $\;\{\alpha_1,\alpha_2,\ldots,
\alpha_k\} \notin K$ and $\mathfrak{n}$ is a monomial in the classes 
$x_1,\ldots,x_m, \overline{x}_{1},\ldots,\overline{x}_{m}$. Now 
$r\big(v^{s}y_{\alpha_1}y_{\alpha_2}\ldots y_{\alpha_k} \mathfrak{n}\big) = [I',J']^{(s)}$ for some $I'$ 
and $J'$ and moreover,  $\epsilon(I') \cup \epsilon(J') \notin K$ because $\;\{\alpha_1,\alpha_2,\ldots,\alpha_k\} 
\notin K$. It follows that $r(I_{SR}^{KU}) \subset SR_{KO}$, proving the converse.
\end{proof}

\nd The main theorem of this section follows.

\begin{thm}\label{thm:ko.of.dj}
There is an isomorphism of graded rings

$$KO^{*}(\mathcal{DJ}(K))\; \cong\; KO^{*}(BT^{m})\big/SR_{KO}$$ 
\end{thm}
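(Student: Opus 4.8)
The plan is to recognize that Lemma \ref{lem:ideal} and Proposition \ref{prop:ideals.equal} have already done essentially all the work, and that the statement follows by assembling them together with the observation that the relevant map is multiplicative. The geometric input is the inclusion $\iota\colon \mathcal{DJ}(K) \hookrightarrow BT^m$ of Definition \ref{dj.definition}, which induces a \emph{ring} homomorphism $\iota^*\colon KO^*(BT^m) \to KO^*(\mathcal{DJ}(K))$. Via the stable splitting of $BT^m$ coming from (\ref{eqn:split.product}) and the split map $i$ of Theorem \ref{thm:dj.split}, this $\iota^*$ is precisely the left-hand map $i^*$ appearing in diagram (\ref{eqn:dj.cd}).

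First I would record surjectivity: the split cofibration of Theorem \ref{thm:dj.split} makes the maps $i^*$ in (\ref{eqn:dj.cd}) onto, so $KO^*(\mathcal{DJ}(K))$ is a quotient of $KO^*(BT^m)$ through $\iota^*$. Next I would read off the kernel. Lemma \ref{lem:ideal} shows that $r(I_{SR}^{KU})$ is an ideal and, more precisely, that it is exactly $\ker(\iota^*)$; Proposition \ref{prop:ideals.equal} then identifies this ideal with $SR_{KO}$. The first isomorphism theorem therefore furnishes an induced isomorphism of graded groups $\overline{\iota^*}\colon KO^*(BT^m)/SR_{KO} \xrightarrow{\;\cong\;} KO^*(\mathcal{DJ}(K))$.

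The final point is that $\overline{\iota^*}$ respects products. Because $\iota^*$ is induced by a map of spaces it is a ring homomorphism, so the induced quotient map is a homomorphism of graded rings and hence a ring isomorphism, which is the assertion of Theorem \ref{thm:ko.of.dj}. The genuinely delicate part of the argument lies upstream, in Lemma \ref{lem:ideal} and Proposition \ref{prop:ideals.equal}: since the realification map $r$ is not multiplicative, one cannot simply transport the complex-oriented description (\ref{eqn:ku.dj}) along $r$, and the diagram chase together with the explicit matching of generators in $\mathcal{G}_2$ is needed to see that $r$ carries the Stanley--Reisner ideal $I_{SR}^{KU}$ onto the combinatorially defined ideal $SR_{KO}$. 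With that established, the theorem itself demands only the two formal observations above.
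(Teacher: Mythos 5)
Your proposal is correct and follows essentially the same route as the paper: the paper's own proof simply cites Proposition \ref{prop:ideals.equal} to identify $SR_{KO}$ with $r(I_{SR}^{KU})$ and then invokes the fact, established in the proof of Lemma \ref{lem:ideal}, that $r(I_{SR}^{KU})$ is the kernel of the surjection $i^*\colon \widetilde{KO}^*(BT^m) \to \widetilde{KO}^*(\mathcal{DJ}(K))$. Your additional remarks — that $i^*$ is induced by the inclusion $\mathcal{DJ}(K) \subseteq BT^m$ and is therefore a ring homomorphism, so the first isomorphism theorem yields a ring isomorphism — are exactly the observations the paper leaves implicit.
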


\begin{proof}
Proposition \ref{prop:ideals.equal} identifies $SR_{KO}$ as $r(I_{SR}^{KU})$, which 
is the kernel of the map $i^*$ of (\ref{eqn:kernel}) in the proof of Lemma \ref{lem:ideal}.
\end{proof}

\

The examples following illustrate calculations in $KO^{0}(\mathcal{DJ}(K))$ based on 
Theorem \ref{thm:ko.of.dj}. The relations of Theorem \ref{thm:ij.relations} are 
used with $s = t = 0$ and the elements $[I,J]$ are to be interpreted modulo the ideal of
relations $SR_{KO}$.

\begin{examples}\label{exm:two.points}\

\nd (1) Let $K = \big\{\{v_1\},\{v_2\}\big\}$ be the simplicial complex consisting of two
distinct vertices. Classes of the form $[(i,0),(0,0)]$ and $[(0,h),(0,0)]$  represent 
$\mathcal{G}_2$ generators of 
$KO^0(\mathbb{C}P^{\infty} \times \ast)$ 
and $KO^0(\ast \times \mathbb{C}P^{\infty})$
respectively in  $KO^0(BT^2)$ as described by (\ref{eqn:g2}). Now, for $i$ and $h$ not both zero,
\begin{align*}
[(i,0),(0,0)]\cdot [(0,h),(0,0)] &= [(i,h),(0,0)] +  [(0,h),(i,0)]\\
&= 0\quad \text{by}\;(\ref{eqn:ko.ideal})\\
\end{align*}
\skp{-0.8}
\nd which is consistent with the fact that $\mathcal{DJ}(K) = 
\mathbb{C}P^{\infty} \vee \mathbb{C}P^{\infty}$ in this case.

\

\nd (2) Let $L = \big\{\{v_1\},\{v_2\},\{v_3\},\{v_4\},\{v_1,v_2\},\{v_2,v_3\},\{v_3,v_4\} ,\{v_2,v_4\},
\{v_2,v_3,v_4\}\big\}$ be the simplicial complex consisting of a $1$-simplex wedged to a 
$2$-simplex at the vertex $v_2$. Here, classes of the form $[(i_1,i_2,0,0),(0,0,0,0)]$ and 
$[(0,h_2,h_3,0),(0,0,0,0)]$ represent $\mathcal{G}_2$ generators of 
$KO^0(\mathbb{C}P^{\infty} \times \mathbb{C}P^{\infty} \times \ast \times \ast)$ 
and $KO^0(\ast \times \mathbb{C}P^{\infty} \times \mathbb{C}P^{\infty} \times \ast)$
respectively in  $KO^0(BT^4)$.  Now
\begin{multline*}
[(i_1,i_2,0,0),(0,0,0,0)] \cdot [(0,h_2,h,_3,0),(0,0,0,0)] \\
=  [(i_1,i_2 + h_2,h_3,0),(0,0,0,0)] + [(0,h_2,h_3,0),(i_1,i_2,0,0)]
= 0\; \; \text{by}\;(\ref{eqn:ko.ideal})\\
\end{multline*}
\skp{-0.8}
\nd reflecting the fact that $\{v_1,v_2,v_3\} \notin L$. Moreover
\begin{multline}\label{eqn,calculation}
[(i_1,i_2,0,0),(0,0,0,0)] \cdot [(l_1,l,_2,0,0),(0,0,0,0)] \\
=  [(i_1+l_1,i_2 + l_2,0,0),(0,0,0,0)] + [(l_1,l_2,0,0),(i_1,i_2,0,0)].
\end{multline}

\nd Repeated application of relations (A) and (B) in Theorem \ref{thm:ij.relations}
reduce the right hand side of (\ref{eqn,calculation}) to a sum of terms of the
form $[(\ast,\ast,0,0), (\ast,\ast,0,0)]$ each of which satisfies the $\mathcal{G}_2$
condition for $KO^0(\mathbb{C}P^{\infty} \times \mathbb{C}P^{\infty})$. This is 
consistent with the fact that $KO^*(\mathbb{C}P^{\infty} \times \mathbb{C}P^{\infty})$
is a $KO^*$-subalgebra of $KO^{*}(\mathcal{DJ}(K))$ corresponding to the simplex 
$\{v_1,v_2\} \in L$.
\end{examples}

\

\subsection{The {\sc cat}$(K)$ approach.}
Definition \ref{dj.definition} expresses  $\mathcal{DJ}(K)$ as the colimit of an
exponential diagram $BT^K$ (\cite{panov.ray.vogt}, where $D(\sigma)$ is written $BT^{\sigma}$),
over the category  {\sc cat}$(K)$ associated to the posets of faces of $K$. 
Since $BT^K$ is a cofibrant diagram, its homotopy colimit is homotopy equivalent to 
$\mathcal{DJ}(K)$ also.
The $KO^*$ version of the 
Bousfield-Kan spectral sequence, studied in \cite[Section 3]{notbohm.ray}, applies in
this case and gives an alternative calculation of $KO^{*}(\mathcal{DJ}(K))$ in terms of the 
{\sc cat}$^{op}(K)$-diagram $KO^*(BT^K)$ whose value on each face 
$\sigma \in K$ is $KO^*(D(\sigma))$. The arguments of \cite{notbohm.ray} apply unchanged 
and are similar to those of Section \ref{section:kodj}. They imply that
the spectral sequence collapses at the $E_2$-term and is concentrated entirely along the 
vertical axis. So the edge homomorphism gives an isomorphism

\begin{equation}\label{eqn:edge}
KO^*(\mathcal{DJ}(K)) \stackrel{\cong}{\longrightarrow}  \;\lim KO^*(BT^K)
\end{equation}

\

\nd of $KO^*$-algebras, by analogy with \cite[Corollary 3.12]{notbohm.ray}.
\skp{0.1}
Informally, the elements of $\;\lim KO^*(BT^K)$ are considered as  finite sequences $(u_\sigma)$ whose
terms $u_\sigma \in KO^*(BT^\sigma)$ are compatible under the inclusions 
$i\colon BT^\sigma \longrightarrow BT^\tau$ for every $\tau \supset \sigma$. More precisely,
the isomorphism (\ref{eqn:edge}) leads to the conclusion following.

\begin{thm}\label{thm:bk.description}
As $KO^*$-algebras, $KO^*(\mathcal{DJ}(K))$ is isomorphic to 
$$\big\{(u_\sigma) \in \prod_{\sigma \in K} KO^*(BT^\sigma) \colon i^*(u_\tau) = u_\sigma 
\;\; \text{for every}\;\; \tau \supset \sigma\big\} $$

\nd where the multiplication and $KO^*$-module structure are defined termwise. \hfill $\square$
\end{thm}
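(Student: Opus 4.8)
The plan is to take the edge-homomorphism isomorphism (\ref{eqn:edge}) as the substantive input---it has already been delivered by the collapse of the $KO^*$ Bousfield--Kan spectral sequence along the lines of \cite{notbohm.ray}---and then to unwind the abstract inverse limit $\lim KO^*(BT^K)$ into the explicit compatible-sequence description asserted in the statement. Thus the only remaining task is a purely categorical identification of that limit, together with a check that it respects the algebra structure.

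First I would recall the construction of the limit of the diagram $KO^*(BT^K)$, which is a functor on {\sc cat}$^{op}(K)$. Its objects are the faces $\sigma \in K$, and the generating arrows correspond to the face inclusions $\sigma \subseteq \tau$; under $KO^*(BT^{(-)})$ each such inclusion is sent to the restriction homomorphism $i^*\colon KO^*(BT^\tau)\longrightarrow KO^*(BT^\sigma)$ induced by $BT^\sigma \hookrightarrow BT^\tau$. By the defining universal property, $\lim KO^*(BT^K)$ is realized as the equalizer sitting inside the product $\prod_{\sigma \in K} KO^*(BT^\sigma)$ and cut out by the relations coming from the arrows, namely the tuples $(u_\sigma)$ with $i^*(u_\tau) = u_\sigma$ for every $\tau \supset \sigma$. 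This is precisely the set displayed in Theorem \ref{thm:bk.description}.

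It then remains to match the algebra structures. Each restriction map $i^*$ is a homomorphism of $KO^*$-algebras, so the termwise multiplication and $KO^*$-action on the ambient product preserve the compatibility relations and hence descend to the subset of compatible tuples; since (\ref{eqn:edge}) was obtained as an isomorphism of $KO^*$-algebras, transporting the structure along it yields the asserted isomorphism. The one point needing mild care is that imposing $i^*(u_\tau) = u_\sigma$ for all pairs $\tau \supset \sigma$ is equivalent to imposing it only on covering inclusions---immediate from functoriality, since every inclusion factors through covering ones---and that no $\lim^1$ term intervenes, which is exactly what the vertical concentration of the spectral sequence guarantees. Consequently the genuine difficulty, the collapse of the spectral sequence and the isomorphism property of the edge homomorphism, has already been discharged by the imported results, so the remaining argument is formal bookkeeping rather than a new obstacle.
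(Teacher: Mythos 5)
Your proposal is correct and takes essentially the same route as the paper: the paper likewise treats the collapse of the $KO^*$ Bousfield--Kan spectral sequence imported from \cite{notbohm.ray} and the resulting edge isomorphism (\ref{eqn:edge}) as the substantive input, then reads off $\lim KO^*(BT^K)$ as the set of tuples $(u_\sigma)$ compatible under the restrictions $i^*$, with multiplication and $KO^*$-action defined termwise. The categorical unwinding of the limit as an equalizer inside the product, and the observation that the restriction maps being $KO^*$-algebra homomorphisms makes the termwise structure descend, is precisely the formal bookkeeping the paper leaves implicit behind the closing $\square$.
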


\nd Theorem \ref{thm:bk.description} extends to $E^*(\mathcal{DJ}(K))$ for any arbitrary
cohomology theory. The corollary following is complementary to Theorem \ref{thm:ko.of.dj}.

\begin{cor}\label{cor:ell}
The natural homomorphism
$$\ell \colon KO^*(BT^m) \longrightarrow \lim KO^*(BT^K)$$

\nd is onto with kernel equal to the ideal $SR_{KO}$ of Theorem \ref{thm:ko.of.dj}.
\end{cor}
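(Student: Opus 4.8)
The plan is to deduce Corollary~\ref{cor:ell} by identifying the two descriptions of $KO^*(\mathcal{DJ}(K))$ already established, namely the quotient description of Theorem~\ref{thm:ko.of.dj} and the limit description of Theorem~\ref{thm:bk.description} (equivalently the edge isomorphism~\eqref{eqn:edge}). First I would observe that the homomorphism $\ell$ is precisely the composite of the restriction map $KO^*(BT^m)\to KO^*(\mathcal{DJ}(K))$ induced by the inclusion $\mathcal{DJ}(K)\subseteq BT^m$ of Definition~\ref{dj.definition}, followed by the isomorphism~\eqref{eqn:edge}. Indeed, the inclusion $\mathcal{DJ}(K)\subseteq BT^m$ factors compatibly through every inclusion $BT^\sigma\hookrightarrow BT^m$ for $\sigma\in K$, so the restriction map lands in the compatible-sequence subalgebra of Theorem~\ref{thm:bk.description} and agrees there with $\ell$.

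Granting this identification, surjectivity of $\ell$ is immediate from the fact that the restriction $KO^*(BT^m)\to KO^*(\mathcal{DJ}(K))$ is onto, which was established in the proof of Lemma~\ref{lem:ideal}: the maps $i^*$ in diagram~\eqref{eqn:dj.cd} are split surjections, so $\widetilde{KO}^*(\mathcal{DJ}(K))$ is a quotient of $\widetilde{KO}^*(BT^m)$. For the kernel, I would invoke~\eqref{eqn:kernel} directly: in the proof of Lemma~\ref{lem:ideal} the map $i^*\colon \widetilde{KO}^*(BT^m)\to \widetilde{KO}^*(\mathcal{DJ}(K))$ is shown to have kernel exactly $r(I_{SR}^{KU})$, and Proposition~\ref{prop:ideals.equal} identifies this with $SR_{KO}$. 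Since $\ell$ and the restriction map have the same kernel (the edge map~\eqref{eqn:edge} being an isomorphism), the kernel of $\ell$ is $SR_{KO}$, as required.

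The main obstacle, and the one point deserving genuine care, is verifying that the composite described above really coincides with the natural homomorphism $\ell$ — that is, that the edge isomorphism~\eqref{eqn:edge} intertwines the geometric restriction map with the algebraic structure map into the limit. This is essentially a naturality statement for the Bousfield--Kan spectral sequence of~\cite{notbohm.ray}: the edge homomorphism is induced by the inclusions $D(\sigma)\hookrightarrow \mathcal{DJ}(K)$, and these are compatible with the further inclusions $D(\sigma)\hookrightarrow BT^m$. I would make this explicit by noting that for each $\sigma\in K$ the square relating $BT^\sigma$, $\mathcal{DJ}(K)$, and $BT^m$ commutes, so that the component of $\ell(u)$ at $\sigma$ is the restriction of $u\in KO^*(BT^m)$ to $KO^*(BT^\sigma)$, which is precisely how the compatible-sequence description in Theorem~\ref{thm:bk.description} receives the restriction map. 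Once this commutativity is recorded, the surjectivity and kernel computations reduce verbatim to the results already proved, and no new calculation is needed.
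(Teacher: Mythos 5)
Your proof is correct, but it takes a genuinely different route from the paper's. The paper argues directly at the level of generators: it asserts surjectivity from the fact that $\ell$ is assembled from the projections $KO^*(BT^m)\to KO^*(BT^\sigma)$, and it computes the kernel by combining Theorem \ref{thm:second.set} with Theorem \ref{thm:bk.description} --- each $KO^*(BT^\sigma)$ is generated over $KO^*$ by those classes $[I,J]^{(s)}$ of $\mathcal{G}_2$ with $\epsilon(I)\cup\epsilon(J)\subseteq\sigma$, so $\ell([I,J]^{(s)})=0$ exactly when $\epsilon(I)\cup\epsilon(J)\notin K$, whence a formal sum of $\mathcal{G}_2$ elements dies under $\ell$ if and only if it lies in $SR_{KO}$. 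You never touch the $\mathcal{G}_2$ generators: you identify $\ell$ with the composite of $i^*\colon KO^*(BT^m)\to KO^*(\mathcal{DJ}(K))$ and the edge isomorphism (\ref{eqn:edge}), and then quote the already-established facts that $i^*$ is a split surjection with kernel $r(I_{SR}^{KU})$ (that is, (\ref{eqn:kernel}) from the proof of Lemma \ref{lem:ideal}) and that $r(I_{SR}^{KU})=SR_{KO}$ (Proposition \ref{prop:ideals.equal}); in effect you derive the corollary from Theorem \ref{thm:ko.of.dj} plus a naturality check, which you correctly isolate as the one point needing care and settle by observing that the edge map's $\sigma$-component is induced by $D(\sigma)\hookrightarrow\mathcal{DJ}(K)$, whose composite with $\mathcal{DJ}(K)\subseteq BT^m$ is the standard inclusion $BT^\sigma\hookrightarrow BT^m$. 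What each approach buys: yours is more economical and, on the surjectivity point, arguably more watertight than the paper's one-line assertion (surjectivity onto each term of an inverse system does not in general give surjectivity onto the limit; routing through the split surjection $i^*$ and the edge isomorphism does), while the paper's generator-level computation gives an independent, concrete verification that the two descriptions of $KO^*(\mathcal{DJ}(K))$ agree and exhibits exactly which generators map to zero, without leaning on the identification of $\ell$ with $i^*$ followed by the edge map.
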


\begin{proof}
The homomorphism $\ell$ is induced by the projections 
$KO^*(BT^m) \rightarrow KO^*(BT^\sigma)$ as $\sigma$ ranges over the faces of $K$, hence it
is onto.

Theorem \ref{thm:second.set} describes each summand $KO^*(BT^\sigma)$ of $KO^*(BT^m)$
as generated over $KO^*$ by those elements $[I,J]^{(s)}$ of $\mathcal{G}_2$ for which
$\epsilon(I) \cup \epsilon(J) \subseteq \sigma$. Moreover, Theorem \ref{thm:bk.description}
implies that $\ell([I,J]^{(s)}) = 0$ if and only if $[I,J]^{(s)}$ satisfies 
$\epsilon(I) \cup \epsilon(J) \notin K$ as in (\ref{eqn:ko.ideal}). So, $\ell$ maps non-trivial
formal sums of elements in $\mathcal{G}_2$ to zero if and only if they lie in $SR_{KO}$.
\end{proof}

Corollary \ref{cor:ell} generalizes to an arbitrary cohomology theory and establishes an
isomorphism
$$E^*(BT^m)\big/\text{ker}\;\ell \longrightarrow E^*(\mathcal{DJ}(K))$$

\nd of $E^*$-algebras.

\

It is instructive to revisit Examples \ref{exm:two.points}  from this 
complementary viewpoint.

\begin{examples}\

\nd (1) If $K = \{\{v_1\},\{v_2\}\}$, then {\sc cat}$(K)$ contains the $(-1)$-simplex $\emptyset$ and two
 $0$-simplices.  Theorem \ref{thm:bk.description} gives $KO^*(\mathcal{DJ}(K))$ as the
 $KO^*$-algebra  $KO^*(BT^{\{v_1\}} \oplus KO^*(BT^{\{v_2\}}$. The homomorphism $\ell$ of  
 Corollary \ref{cor:ell}  maps the elements $[(i,0),(0,0)]$ and  $[(0,h),(0,0)]$ of $KO^0(BT^2)$ to the
 elements $\big([(i),(0)],0\big)$ and  $\big(0,[(h),(0)]\big)$; in particular, their product is zero.
 
 \
 
\nd (2) If $L = \big\{\{v_1\},\{v_2\},\{v_3\},\{v_4\},\{v_1,v_2\},\{v_2,v_3\},\{v_3,v_4\} ,\{v_2,v_4\},
\{v_2,v_3,v_4\}\big\}$, then  {\sc cat}$(L)$ contains the $(-1)$-simplex $\emptyset$, four
$0$-simplices, four 1-simplices and one 2-simplex.  Theorem \ref{thm:bk.description}  expresses $KO^*(\mathcal{DJ}(K))$ as a certain  $KO^*$-subalgebra of 
 
$$KO^*(BT^{\{v_2\}}) \times KO^*(BT^{\{v_1,v_2\}}) \times KO^*(BT^{\{v_2,v_3,v_4\}}),$$
\skp{0.1}
\nd which may be identified as the pullback
\begin{equation}\label{eqn:pullback}
KO^{*}(BT^{\{v_1,v_2\}})  \oplus_{KO^{*}(BT^{\{v_2\}})} KO^{*}(BT^{\{v_2,v_3,v_4\}}).
\end{equation}

\nd The elements of (\ref{eqn:pullback}) consist of ordered pairs $(u,w)$, for which
$u \in KO^*(BT^{\{v_1,v_2\}})$ and $w \in KO^*(BT^{\{v_2,v_3,v_4\}})$ share a common restriction to 
$KO^*(BT^{\{v_2\}})$. Pairs are multiplied coordinate-wise; products of the form $(u,0)\cdot(0,w)$
give $(0,0) = 0$. For $i_1$ and $h_3$ nonzero, the homomorphism $\ell$ of
Corollary \ref{cor:ell} maps the elements 
$$[(i_1,i_2,0,0),(0,0,0,0)] \;\; \text{and} \;\;  [(0,h_2,h_3,0),(0,0,0,0)]$$

\nd of $KO^0(BT^4)$ to the pairs
$$\big([(i_1,i_2),(0,0)],0\big) \;\; \text{and}\;\;  \big(0,[(h_2,h_3,0),(0,0,0)]\big)$$ 

\nd respectively. Their product is zero as required. Similarly, $\ell$ maps
$[(i_1,i_2,0,0),(0,0,0,0)]$  and  $[(l_1,l,_2,0,0),(0,0,0,0)]$ to the pairs
$$\big([(i_1,i_2),(0,0)],0\big) \; \text{and}\; \big([(l_1,l_2),(0,0)],0\big)$$

\nd respectively and, their product is $\big([(i_1+l_1, i_2+l_2),(0,0)]  + [(l_1,l_2),(i_1,i_2)],0\big)$.
\end{examples}

\skp{0.4}

\section{Toric manifolds }\label{sec:tm}
\subsection{Background.} Briefly, a toric manifold $M^{2n}$ is a manifold covered by local 
charts $\mathbb{C}^n$, each with the standard $T^n$ action, compatible in such a way that 
the quotient $M^{2n}\big/T^n$ has the structure of a simple polytope \pn. A {\em simple} 
polytope $P^n$ has the property that at each vertex, exactly $n$ facets  intersect.
Under the $T^n$ action, each copy of $\mathbb{C}^n$ must project to an $\mathbb{R}^n_+$  
neighborhood of a vertex of \pn.
The construction of Davis and Januszkiewicz (\cite{davis.jan}, section $1.5$) realizes all
such manifolds as follows. Let

$${\mathcal F} = \{F_{1},F_{2},\ldots,F_{m}\}$$

\

\nd denote the set of facets of \pn. The fact that \pn $\;$ is 
simple implies that every codimension-$l$ face $F$ can be 
written uniquely as

$$F = F_{i_{1}} \cap F_{i_{2}} \cap \cdots \cap F_{i_{l}}$$

\

\nd where the $F_{i_{j}}$ are the facets containing $F$. Let

$$\lambda : {\mathcal F} \longrightarrow \mathbb{Z}^n$$

\

\nd be a function into an $n$-dimensional integer lattice satisfying the 
condition that whenever 
$F = F_{i_{1}} \cap F_{i_{2}} \cap \cdots \cap F_{i_{l}}$ then 
$\{\lambda(F_{i_{1}}),\lambda(F_{i_{2}}), \ldots ,\lambda(F_{i_{l}})\}$ span 
an $l$-dimensional submodule of $\mathbb{Z}^n$ which is a direct summand.
Next, regarding $\mathbb{R}^n$ as the Lie algebra of $T^n$, 
$\lambda$ associates to each codimension-$l$ face $F$ of \pn \/ a rank-$l$
subgroup $G_F \subset T^n$. Finally, let $p \in$ \pn $\;$ and $F(p)$ be 
the unique face with $p$ in its relative interior. Define an equivalence
relation $\sim$ on $T^n$ $\times$ \pn $\;$ by $(g,p) \sim (h,q)$ if and only
if $p = q$ and $g^{-1}h \in G_{F(p)} \cong T^l$. Then

$$M^{2n} \cong M^{2n}(\lambda) = T^n \times P^n\big/\!\sim$$ 

\

\nd and, $M^{2n}$ is a smooth, closed, connected, $2n$-dimensional manifold with 
$T^n$ action induced by left translation (\cite{davis.jan}, page 423). The 
projection $\pi \colon M^{2n} \rightarrow P^n$ is induced from the projection
$T^n \times$ \pn $\rightarrow$ \pn. It is noted in \cite{davis.jan} that every smooth projective 
toric variety has this description. 

The goal of this section is an analogue of (\ref{eqn:cohomology})
for the $KO^*$-rings of certain toric manifolds $M^{2n}$. For toric manifolds determined by a 
simple polytope and a characteristic map on its facets, a description of the $KO^*$-module
structure of $KO^*(M^{2n})$ was given in \cite{bb} in terms of $H^*(M^{2n}; \mathbb{Z}_2)$ 
as a module over $\sq^1$ and $\sq^2$. A more refined computation of the $KO^*$-module
structure, for certain families of manifolds $M^{2n}$, is presented in \cite{nishimura}. The
$KO^*$-ring structure for families of toric manifolds known as Bott towers may be found
in \cite{civan.ray}, without reference to $KO^*(\mathcal{DJ}(K))$.

\

\subsection{The Steenrod algebra structure of toric manifolds.} As in
Section \ref{sect:basic.calculation}, let
$\mathcal{A}_1$ denote the subalgebra of the Steenrod algebra generated by
$\sq^1$ and $\sq^2$.  Let $S^{0}$ denote the $\mathcal{A}_1$-module consisting of a 
single class in dimension 0 and the trivial action of $\sq^1$ and $\sq^2$.
Denote by $\mathcal{M}$ the $\mathcal{A}_1$-module with a class $x$ in dimension $0$, 
a class $y$ in dimension $2$ and the action given by $\sq^2(x) = y$.

According to (\ref{eqn:cohomology}), $H^*(M^{2n}; \mathbb{Z}_2)$ is concentrated in even
degree and so, as an $\mathcal{A}_1$-module, must be isomorphic to a direct sum of
suspended copies of the modules $S^0$ and $\mathcal{M}$. That is, there is a 
decomposition

\begin{equation}\label{eqn:a.decomposition}
H^*(M^{2n}; \mathbb{Z}_2) \;\cong\;  \bigoplus_{i=0}^{n}s_{i}\Sigma^{2i}S^0\;\oplus\;
\bigoplus_{j=0}^{n-1}m_{j}\Sigma^{2j}\mathcal{M},
\qquad s_i, m_j \in \mathbb{Z}.
\end{equation}

\nd The numbers $s_i$ and $m_j$ were labelled ``BB-numbers'' in \cite[Section 5]{civan.ray}.
The $\sq^2$-homology of $M^{2n}$, $H^*(M^{2n};\sq^2)$, is zero precisely when 
$s_j = 0$ for all $j$.

\

\begin{examples}
The toric manifolds $\mathbb{C}P^{2k}$ are $\sq^2$-acyclic for any positive integer $k$.
\end{examples}
\begin{examples}\label{exm:bott.towers}
The toric manifolds $\mathbb{C}P^{2k+1}$ have $s_i = 0$ for $i \leq k$ and $s_{k+1} =1$,
for any positive integer $k$. The {\em terminally odd\/} Bott towers of \cite[Section 5]{civan.ray}
have $s_1 = 1$ and $s_i = 0$ for $i\geq 2$; the {\em totally even\/} towers have $m_j = 0$
for every $j$.
\end{examples}
\begin{examples}
The non-singular toric varieties $X^n(r;a_r,\ldots,a_n)$ constructed in \cite{nishimura}
and satisfying $2\leq r \leq n$, $a_j\in  \mathbb{Z}$ and $n-r$ odd are all $\sq^2$-acyclic
These varieties correspond to $n$-dimensional fans having $n+2$ rays .
\end{examples}

\begin{rem}
The preprint \cite{bbcg3} contains a construction of families of toric manifolds derived
from a given one. Work is in progress to confirm that this construction can be done in 
such a way that the family of derived toric manifolds will each be $\sq^2$-acyclic, though
this property might not be satisfied by the initial one. 
\end{rem}

The next proposition is an immediate consequence of the calculation in \cite{bb}.

\begin{prop}\label{prop:even.ko}
If  $M^{2n}$ is $\sq^2$-acyclic, then the graded ring $KO^*(M^{2n})$
is concentrated in even degree and has no additive torsion. \hfill $\square$
\end{prop}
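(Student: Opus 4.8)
The plan is to feed the $\mathcal{A}_1$-module decomposition (\ref{eqn:a.decomposition}) into the Adams spectral sequence, exactly along the lines of the proof of Theorem \ref{thm:bu.and.bo}. Since $M^{2n}$ is $\sq^2$-acyclic, every summand $S^0$ disappears and
$$H^*(M^{2n};\mathbb{Z}_2)\;\cong\;\bigoplus_{j}m_j\,\Sigma^{2j}\mathcal{M}$$
is a direct sum of suspended copies of the single module $\mathcal{M}$. The whole point is that $\mathcal{M}$ is, up to a dimension shift, the reduced cohomology $\widetilde H^*(\mathbb{C}P^2;\mathbb{Z}_2)$, the very $\mathcal{A}_1$-module whose structure controlled Theorem \ref{thm:bu.and.bo}. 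Thus the computation of $KO^*(M^{2n})$ reduces to the one summand $\mathcal{M}$, reassembled over the shifts $\Sigma^{2j}$.

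Concretely, I would compute the connective $ko$-cohomology through the $S$-dual, as in (\ref{eqn:reg.dual}), so that the governing spectrum is $bo\wedge D(M^{2n})$; as in the passage around (\ref{eqn:reg.dual}), the dual module $H^*(D(M^{2n});\mathbb{Z}_2)$ is again, up to suspension, a sum of copies of $\mathcal{M}$. A change-of-rings theorem then identifies the $E_2$-term as a direct sum of shifted copies of $\operatorname{Ext}_{\mathcal{A}_1}(\mathcal{M},\mathbb{Z}_2)$. By Wood's equivalence $\Sigma^2 bu\simeq bo\wedge\mathbb{C}P^2$ already used in Theorem \ref{thm:bu.and.bo}, this Ext group coincides with the $E_2$-term of the Adams spectral sequence for $bu$, namely the polynomial algebra $\mathbb{Z}_2[v_0,v_1]$, concentrated in even total degree $t-s$ and built entirely from $v_0$-towers. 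Since every Adams differential lowers $t-s$ by one and would therefore carry an even-degree class into an odd-degree slot where $E_2$ vanishes, the spectral sequence collapses at $E_2$.

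It remains to pass from $E_\infty=E_2$ to the actual groups, and this is the only step requiring care. Because the $E_\infty$-term consists solely of infinite $v_0$-towers sitting in even total degrees, there is no room for a hidden extension producing two-torsion: each even-degree group is a single $v_0$-tower and assembles to a copy of $\mathbb{Z}$, while the odd-degree groups vanish. This is precisely the even, torsion-free pattern already recorded for this module in \cite{bb}, which is why the proposition is immediate once the reduction is made. Finally, the Bott class $\beta$ acts injectively, as in the discussion preceding Proposition \ref{prop:ko.to.KO}, so inverting it carries the connective answer to $KO^*(M^{2n})$ while preserving both evenness and the absence of additive torsion, which is the assertion of Proposition \ref{prop:even.ko}.
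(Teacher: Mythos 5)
Your strategy---decompose $H^*(M^{2n};\mathbb{Z}_2)$ into suspended copies of $\mathcal{M}$, identify $\mathcal{M}$ up to suspension with $\widetilde{H}^*(\mathbb{C}P^2;\mathbb{Z}_2)$, pass to the $S$-dual, and run the Adams spectral sequence exactly as in Theorem \ref{thm:bu.and.bo}---is precisely the calculation of \cite{bb} that the paper invokes (the paper gives no independent argument, merely the citation), and most of it is sound: the dual module is again a sum of suspended copies of $\mathcal{M}$ because $\chi(\sq^2)=\sq^2$ in the presence of trivial $\sq^1$-action, the change of rings identifies each summand of the $E_2$-term with an even suspension of $\operatorname{Ext}_{E(1)}(\mathbb{Z}_2,\mathbb{Z}_2)\cong\mathbb{Z}_2[v_0,v_1]$, and concentration in even stems forces collapse.

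There is, however, a genuine gap in your last step, where you pass from $E_\infty$ to the integral conclusion. The mod $2$ Adams spectral sequence converges to the $2$-completion of $ko^*(M^{2n})$ and is blind to odd-primary phenomena. From an $E_\infty$-term consisting of infinite $v_0$-towers in even stems you may legitimately conclude that $\bigl(ko^*(M^{2n})\bigr)^{\wedge}_2$ is concentrated in even degrees and has no torsion; you may \emph{not} conclude that the groups ``assemble to copies of $\mathbb{Z}$'' or that the odd-degree groups vanish integrally, since finite groups of odd order, in any degree, are invisible to this spectral sequence. The missing ingredient is the standard argument away from the prime $2$: by (\ref{eqn:r.and.c}) we have $r\circ c=2$, so after inverting $2$ the complexification $c$ embeds $KO^*(M^{2n})[1/2]$ as a retract of $KU^*(M^{2n})[1/2]$, and $KU^*(M^{2n})$ is concentrated in even degrees and torsion-free by (\ref{eqn:cohomology}); combining this with the $2$-adic information from the spectral sequence (and finite generation of each $KO^i(M^{2n})$) yields the proposition. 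Two smaller points: each even column of $E_\infty$ is a direct sum of $m_j$-many $v_0$-towers, not ``a single $v_0$-tower''; and since the unit $1\in H^0$ always generates an $S^0$-summand, the hypothesis and your decomposition must be read in \emph{reduced} cohomology, so what the argument actually controls is $\widetilde{KO}^*(M^{2n})$---the coefficient ring $KO^*$ contributes the usual $\eta$-torsion to the unreduced ring, a convention your write-up (like the paper's statement) leaves implicit.
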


\subsection{The $KO$-rings of $\sq^2$-acyclic toric manifolds.}\label{sec:kotm}

Recall from (\ref{eqn:basic.fibration}) the Borel fibration for toric manifolds,
\begin{equation}\label{eqn:fibration}
M^{2n} \stackrel{i}{\longrightarrow}  ET^n \times_{T^n} M^{2n} \stackrel{p}{\longrightarrow} BT^n
\end{equation}
\nd with total space  $\mathcal{DJ}(K)$. 

\begin{thm}\label{thm:ko.of.m}
For any $\sq^2$-acyclic toric manifold $M^{2n}$, there is an isomorphism 

$$KO^{*}(M^{2n}) \; \cong \;  KO^{*}\big(\mathcal{DJ}(K)\big)\Big/r(J^{KU})$$

\nd of $KO^*$-algebras, where  $r$ is the realification map and $J^{KU}$ is the 
ideal  defined in (\ref{eqn:cohomology}).
\end{thm}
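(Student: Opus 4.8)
The plan is to transport the complex-oriented description \eqref{eqn:cohomology} across the realification map $r$, mimicking the strategy of Theorem~\ref{thm:ko.of.dj}. The starting point is the commuting square relating $KU$ and $KO$ for the fibration \eqref{eqn:fibration}: the map $p^*\colon KU^*(BT^n)\to KU^*(\mathcal{DJ}(K))$ has image generating the ideal $J^{KU}$, and \eqref{eqn:cohomology} gives $KU^*(M^{2n})\cong KU^*(\mathcal{DJ}(K))/J^{KU}$. The first step is to record that, because $M^{2n}$ is $\sq^2$-acyclic, Proposition~\ref{prop:even.ko} shows $KO^*(M^{2n})$ is concentrated in even degree with no torsion; hence the Bott sequence \eqref{eqn:bott.seq} forces $r\colon KU^*(M^{2n})\to KO^*(M^{2n})$ to be surjective and $c$ to be injective, exactly as for $\mathcal{DJ}(K)$ in Section~\ref{section:kodj}.

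Next I would run the same diagram chase that proves Lemma~\ref{lem:ideal} and Proposition~\ref{prop:ideals.equal}, but one level up the fibration. The collapse map $i\colon M^{2n}\to \mathcal{DJ}(K)$ induces
\begin{equation}\label{eqn:m.square}
\begin{CD}
KU^*(\mathcal{DJ}(K)) @>{i^*}>> KU^*(M^{2n})\\
@VV{r}V @VV{r}V\\
KO^*(\mathcal{DJ}(K)) @>{i^*}>> KO^*(M^{2n}),
\end{CD}
\end{equation}
where the top and bottom $i^*$ are the quotient maps by $J^{KU}$ and by its $KO$-analogue respectively. The goal is to show that the kernel of the bottom $i^*$ is precisely $r(J^{KU})$. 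That $r(J^{KU})$ lies in the kernel follows from commutativity of the left square. For the reverse inclusion I would chase an element $x\in KO^*(\mathcal{DJ}(K))$ with $i^*(x)=0$: using surjectivity of $c$ onto the image, or rather using that $KU^*(M^{2n})\to KU^*(\mathcal{DJ}(K))$ fits into an exact sequence whose kernel is $J^{KU}$, lift $x$ through $r$ (possible since $r$ is onto), push into $KU^*(M^{2n})$, observe it dies there, and conclude the lift lies in $J^{KU}$, so $x\in r(J^{KU})$. One must also check $r(J^{KU})$ is genuinely an ideal in $KO^*(\mathcal{DJ}(K))$, which repeats the argument of Lemma~\ref{lem:ideal} verbatim since $r(J^{KU})$ is again realized as the kernel of a ring map.

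The main obstacle is the exactness needed for the lifting step: unlike the $\mathcal{DJ}(K)$ case, where the split cofibration of Theorem~\ref{thm:dj.split} supplied a clean long exact sequence making the chase immediate, here the comparison flows along the fibration \eqref{eqn:fibration} rather than a cofibration, so I cannot simply quote a splitting. What rescues the argument is that $\sq^2$-acyclicity, via Proposition~\ref{prop:even.ko}, guarantees both theories are concentrated in even degree, whence the relevant Bott and Atiyah-Hirzebruch-Serre spectral sequences collapse and $i^*$ is surjective with no room for odd-degree error terms. I would therefore spend the bulk of the work verifying that the natural surjection $KO^*(\mathcal{DJ}(K))\to KO^*(M^{2n})$ has kernel exactly $r(J^{KU})$, treating the $J^{KU}$ of \eqref{eqn:cohomology} as the ideal generated by $p^*(KU^2(BT^n))$ and noting its realification is $KO^*(\mathcal{DJ}(K))$-stable by the ideal argument above. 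Assembling these yields
$$KO^*(M^{2n})\;\cong\; KO^*(\mathcal{DJ}(K))\big/\ker i^*\;=\;KO^*(\mathcal{DJ}(K))\big/r(J^{KU}),$$
and the ring and $KO^*$-module structures are respected throughout because $i^*$ is a ring map and $r$ is $KO^*$-linear by \eqref{eqn:r.and.c}.
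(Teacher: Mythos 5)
Your overall strategy is the same as the paper's (a diagram chase through the Bott sequences \eqref{eqn:bott.seq} of $M^{2n}$, $\mathcal{DJ}(K)$ and $BT^n$, using Proposition \ref{prop:even.ko} to make them short exact), but the key step of your chase contains a genuine gap. Having lifted $z\in\ker\bigl(i^*\colon KO^*(\mathcal{DJ}(K))\to KO^*(M^{2n})\bigr)$ to $y\in KU^*(\mathcal{DJ}(K))$ with $r(y)=z$, you ``push into $KU^*(M^{2n})$, observe it dies there, and conclude the lift lies in $J^{KU}$.'' It does not die there. All you know is $r\bigl(i^*_{KU}(y)\bigr)=i^*(r(y))=0$, and $r$ is far from injective: by exactness of \eqref{eqn:bott.seq} its kernel is the image of $\chi\colon KO^{*-2}(M^{2n})\to KU^{*}(M^{2n})$, which is a large subgroup because $KO^{*-2}(M^{2n})$ is a nontrivial even-degree group for a toric manifold. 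So $i^*_{KU}(y)$ only lies in $\operatorname{im}\chi$, not in $\{0\}$, and an arbitrary lift $y$ of $z$ need not lie in $J^{KU}$; asserting that it does is essentially the statement you are trying to prove.

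The missing idea --- and the heart of the paper's proof --- is a correction of the lift. Because $KO^{*}(M^{2n})$ is concentrated in even degree, the Bott sequence of $M^{2n}$ is short exact, so $i^*_{KU}(y)=\chi(x)$ for some $x\in KO^{*-2}(M^{2n})$. Because $i^*\colon KO^{*-2}(\mathcal{DJ}(K))\to KO^{*-2}(M^{2n})$ is onto (this follows from the lower-left commutative square $i^*\circ r=r\circ i^*_{KU}$ of the paper's diagram \eqref{eqn:manifold.diagram}, a cleaner justification than your appeal to collapsing spectral sequences), one may choose $w$ with $i^*(w)=x$. The corrected lift $y-\chi(w)$ still satisfies $r\bigl(y-\chi(w)\bigr)=z$ since $r\circ\chi=0$, and by naturality of $\chi$ it genuinely vanishes under $i^*_{KU}$, hence lies in $J^{KU}$ by \eqref{eqn:cohomology} for $E=KU$; thus $z\in r(J^{KU})$. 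Note that this chase requires the full three-column, three-row diagram \eqref{eqn:manifold.diagram}, including the $\chi$-maps and the degree $*-2$ row, not just the two-by-two square you wrote down. (Also, once $\ker i^*=r(J^{KU})$ is established there is nothing left to verify about $r(J^{KU})$ being an ideal: it is the kernel of a ring map, which is the same observation as in Lemma \ref{lem:ideal}.) With this correction inserted, your argument coincides with the paper's proof.
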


\begin{remark}
Notice  that $r(J^{KU})$, which
is the realification of the ideal generated by the image of 
$ KU^{*}(BT^{n})  \xrightarrow{p^{*}} KU^{*}\big(\mathcal{DJ}(K)\big)$,
is not the same as $J^{KO}$ which is the ideal generated by
$p^{*}\big(KO^{*}(BT^{n})\big)$; this represents
a significant departure from the situation for complex-oriented $E^*(M^{2n})$.
As in Lemma \ref{lem:ideal}, the non-multiplicativity of the map $r$ implies that $r(J^{KU})$
is not in general an ideal but Theorem \ref{thm:ko.of.m} confirms that 
$KO^{*}\big(\mathcal{DJ}(K)\big)\Big/r(J^{KU})$ is multiplicatively closed.
\end{remark}

\

\nd {\em Proof of Theorem \ref{thm:ko.of.m}.\/} 
The Bott sequences (\ref{eqn:bott.seq}) for $M^{2n}$, $\mathcal{DJ}(K)$ and $BT^n$ 
link together to give the commutative diagram.

\begin{equation}\label{eqn:manifold.diagram}
\begin{CD}
KO^{*-2}(M^{2n}) @<{i^*}<<
KO^{*-2}\big(\mathcal{DJ}(K)\big) @<{p^{*}}<<  KO^{*-2}(BT^{n})\\
\\
@VV{\chi}V              @V{\chi}VV            @V{\chi}VV             \\
\\
KU^{*}(M^{2n}) @<{i_{KU}^*\; \text{onto}}<<
KU^{*}\big(\mathcal{DJ}(K)\big) @<{p^{*}}<<  KU^{*}(BT^{n})\\
\\
@VV{r\; \text{onto}}V              @VV{r\; \text{onto}}V            @VV{r\; \text{onto}}V \\
\\
KO^{*}(M^{2n}) @<{i^*}<<
KO^{*}\big(\mathcal{DJ}(K)\big) @<{p^{*}}<<  KO^{*}(BT^{n})
\end{CD}
\end{equation}

\skp{0.5}

\nd Recall now that Proposition \ref{prop:even.ko} implies that  $KO^{*}(M^{2n})$ 
is concentrated in even degrees and so all the Bott sequences are short exact.
The lower left commutative square in (\ref{eqn:manifold.diagram}) implies that the
maps $i^*$ are onto. A diagram chase is needed next to identify the kernel of $i^*$.

Let $z \in KO^{*}\big(\mathcal{DJ}(K)\big)$ and suppose
$i^*(z) = 0$. Since $r$ is onto, $y \in KU^{*}\big(\mathcal{DJ}(K)\big)$ exists
with $r(y) = z$. Then $r(i_{KU}^*(y)) = i^*(z) = 0$. The exactness of the leftmost
Bott sequence implies now that $x \in KO^{*-2}(M^{2n})$ exists with 
$\chi(x) = i_{KU}^*(y)$. The map $i^*$ is onto so $w \in KO^{*-2}\big(\mathcal{DJ}(K)\big)$
satisfying $i^*(w) = x$. Then

$$i_{KU}^*(y - \chi(w)) = i_{KU}^*(y) - i^*(\chi(w)) =  i_{KU}^*(y) - \chi(i^*(w)) = 
i_{KU}^*(y) - \chi(x) = 0.$$

\

\nd So $y - \chi(w) \in \big\langle p^{*}\big(KU^{*}(BT^{n})\big)\big\rangle$ by
(\ref{eqn:cohomology}) for $E = KU$. Finally, $r(y - \chi(w)) = r(y) = z$ and so
$z \in {r\big(\big\langle p^{*}(KU^{*}(BT^{n}))\big\rangle\big)}$ as required. \hfill 
$\square$

\

\subsection{Further examples.}
A few simple examples illustrate the fact that the situation  is considerably more difficult
when $M^{2n}$ is not $\sq^2$-acyclic. In all which follows, the number $s_i$ and $m_j$
are those defined by (\ref{eqn:a.decomposition}).

Manifolds $M^{2n}$  for which all $m_j = 0$, as is the case for the totally even
Bott towers of Examples \ref{exm:bott.towers}, have $KO^*(M^{2n})$ a free 
$KO^*$-module. Particularly revealing is the most basic case
$M^{2n} = \prod_{k=1}^{n} \mathbb{C}P^1$ with $n=1$. Recall from Section 
\ref{sec:ijays} that
$$KO^{*} \;\cong \;  \mathbb{Z}[e , \alpha, \beta, \beta^{-1}]/(2e, e^3, e\alpha, 
\alpha^{2} - 4\beta)$$

\nd with $e \in KO^{-1}, \alpha \in KO^{-4}$ and $\beta \in KO^{-8}$.

\begin{exm}\label{exm:cp1}
The classes $X_1^{(s)} \in KO^{-2s}(\mathbb{C}P^\infty)$ and $X_i^{(0)} = X_i$,
specified in Definition \ref{defin:don.generator.notation}, restrict  to classes in 
$KO^{-2s}(\mathbb{C}P^1)$ which also will  be denoted by $X_1^{(s)}$ and $X_1$.
The $KO^*$-algebra $KO^{*}(\mathbb{C}P^1)$ is isomorphic to $KO^*[g]\big/(g^2)$
where $g \in \widetilde{KO}^2(\mathbb{C}P^1)$ is the generator arising from the unit
of the spectrum $KO$. In particular 
$$e^2g = X_1 \in KO^{(0)}(\mathbb{C}P^1) \quad \text{and} \quad 2\beta{g} = X_1^{(3)} \in
KO^{-6}(\mathbb{C}P^1).$$  Now $\mathbb{C}P^1$ is the smooth toric variety
associated to the simplicial complex $K = \big\{\{v_1\}, \{v_2\}\big\}$ in the manner
described in Section \ref{sec:introduction}. The fibration (\ref{eqn:fibration}) specializes to

$$\mathbb{C}P^1 \stackrel{i}{\longrightarrow} S(\eta \oplus \mathbb{R}) 
\stackrel{p}{\longrightarrow} BT^1,$$

\

\nd the total space of which is the  sphere bundle of $\eta \oplus \mathbb{R}$. So
$\mathcal{DJ}(K)$ is homotopy equivalent to $\mathbb{C}P^\infty \vee \mathbb{C}P^\infty$.
(Of course, this agrees with the description given by (\ref{eqn:ds}).) The map $i$ includes
$\mathbb{C}P^1$ into each wedge summand by pinching the equator.

It follows from \cite[Section 4]{civan.ray} that 
\begin{enumerate}
\item $i^*$ is an epimorphism onto
$KO^d(\mathbb{C}P^1)$ for all $d \equiv \hspace{-0.16in}/ \;1,2$ mod $8$. 
\item If $d = 1-8t$ then
$e\beta^{t}g$ has order $2$ but  $KO^{d}(\mathbb{C}P^\infty \vee \mathbb{C}P^\infty) = 0$.
\item  If $d = 2-8t$ then $2\beta^t{g} \in \text{Im}(i^*)$  but $\beta^t{g} \notin \text{Im}(i^*)$.
\end{enumerate}

\nd These details combined with diagram (\ref{eqn:manifold.diagram}) confirm that 
$$\text{Im}{(i^*)} \; \cong \;KO^{*}\big(\mathcal{DJ}(K)\big)\Big/r(J^{KU})$$

\nd in dimensions $\equiv \hspace{-0.16in}/ \;3,4$ mod $8$. 
\end{exm}

Example \ref{exm:cp1} extends to an analysis of various toric manifolds with a 
single non-zero $s_i$ but unrestricted $m_j$.

\begin{exm}
The projective space $\mathbb{C}P^{4k+1}$ has $s_{2k+1} = 1$ and all other $s_i = 0$.
It  is the smooth toric variety associated to the simplicial complex $K$ which is the boundary
of the simplex $\Delta^{4k+1}$.

The $KO^*$-algebra $KO^*(\mathbb{C}P^{4k+1})$  admits $KO^*(S^{8k+2})$ as an
additive summand, generated by $h \in KO^{8k+2}(\mathbb{C}P^{4k+1})$ such that
$h^2 = 0$. In particular,
$$e^2\beta^k{h} = X_{1}^{2k+1} \in KO^0(\mathbb{C}P^{4k+1}) \quad \text{and}  \quad
2\beta^{k+1}h = X_{1}^{2k}X_1^{(3)} \in KO^{-6}(\mathbb{C}P^{4k+1}).$$

\nd It follows from Example \ref{exm:cp1} that 

$$i^*\colon KO^d(\big(\mathcal{DJ}(K)\big) \longrightarrow KO^d(\mathbb{C}P^{4k+1})$$

\nd is an epimorphism for $d \equiv \hspace{-0.16in}/ \;1,2$ mod $8$. So the cokernel
of $i^*$ is isomorphic to the $\mathbb{Z}/2$ vector space generated by the elements
$e\beta^t{h}$ and $\beta^t{h}$, whereas 

$$\text{Im}{(i^*)} \; \cong \;KO^{*}\big(\mathcal{DJ}(K)\big)\Big/r(J^{KU})$$

\nd in dimensions $\equiv \hspace{-0.16in}/ \;3,4$ mod $8$. 
 \end{exm}
 
 \begin{exm}
 A terminally odd Bott tower $M^{2n}$ has $s_1 = 1$ and all other $s_i = 0$. In this case the
 simplicial complex $K$ is the boundary of an $n$-dimensional cross-polytope. As in
 Example \ref{exm:cp1}, it follows that the cokernel of $i^*$ is isomorphic to the 
 $\mathbb{Z}_2$-vector space generated by the elements $e\beta^t{g}$ and $\beta^t{g}$
 whereas 

$$\text{Im}{(i^*)} \; \cong \;KO^{*}\big(\mathcal{DJ}(K)\big)\Big/r(J^{KU})$$

\nd in dimensions $\equiv \hspace{-0.16in}/ \;3,4$ mod $8$. 
 \end{exm}
 
\
 
 \begin{ack}
 The authors are grateful to Matthias Franz for stimulating and helpful conversations
 about moment-angle complexes, Davis-Januszkiewicz spaces and toric manifolds.
 \end{ack}
 
 \newpage
 
\bibliographystyle{amsalpha}

\begin{thebibliography}{99}


\bibitem{jfa} J.~F.~Adams, {\em Stable Homotopy and Generalized Homology\/}, 
University of Chicago Press, 1974.

\bibitem{anderson} D.~W.~Anderson, {\em The Real $K$-theory of Classifying Spaces\/},
PNAS 1964 {\bf 51}: 634--636.

\bibitem{atiyah.segal} M.~F.~Atiyah and G.~B.~Segal {\em Equivariant $K$-theory and 
Completion\/}, J. Differential Geometry {\bf 3} (1969): 1--18.


\bibitem{bb}  A.~Bahri, M.~Bendersky, {\em The $KO$-theory of toric manifolds\/},
Trans. AMS, 2000 {\bf 352}, no. 3, 1191--1202

\bibitem{bbcgpnas}A.~Bahri, M.~Bendersky, F.~R.~Cohen, and S.~Gitler,
{\em Decompositions of the polyhedral product functor with applications to
moment-angle complexes and related spaces}, PNAS 2009 {\bf 106}:12241--12244.

\bibitem{bbcg}A.~Bahri, M.~Bendersky, F.~R.~Cohen, and S.~Gitler,  {\em The
polyhedral product functor: A method of decomposition for moment-angle complexes,
arrangements and related spaces}. To appear in Advances in Mathematics.

\bibitem{bbcg3} A.~Bahri, M.~ Bendersky, F.~ Cohen and S.~ Gitler, {\em On an 
Infinite Family of Toric Manifolds Associated to a Given One\/}. In preparation.

\bibitem{baybrun} D.~Bayen and  R.~Bruner,
{\em Real Connective $K$-Theory and the Quaternion Group\/}, Trans. AMS, {\bf 348}, 
(1996): 2201--2216.

\bibitem{bousfield.kan}A.~K. Bousfield and D.~M. Kan. \emph{Homotopy Limits, Completions
and Localizations}, Lecture notes in Mathematics {\bf 304}, Springer Verlag, 1972.

\bibitem{buchstaber.panov.2} V.~Buchstaber and T.~Panov,{\em  Torus Actions and their 
Applications in Topology and Combinatorics }, AMS University Lecture Series, volume {\bf 24}, 
(2002).

\bibitem{buchstaber.ray} V.~Buchstaber and N.~Ray, {\em An Invitation to Toric Topology:
vertex four of a remarkable tetrahedron\/} . Proceedings of the International Conference in 
Toric Topology, Osaka 2006, edited by Megumi Harada, Yael Karshon, Mikiya Masuda, and 
Taras Panov. AMS Contemporary Mathematics {\bf 460}:1--27, 2008.

\bibitem{civan.ray} Y.~Civan and N.~Ray, {\em Homotopy Decompositions and Real K-Theory 
of Bott Towers}, $K$-Theory {\bf 34} (2005), 1--33.

\bibitem{danilov}V.~I.~ Danilov, {\em The Geometry of Toric Varieties}, Russian Math. Surveys 
{\bf 33}. (1978), 97--154; Uspekhi Mat. Nauk 33 (1978), 85--134.

\bibitem{davis.jan} M.~Davis, and T.~Januszkiewicz, {\em Convex polytopes,
Coxeter orbifolds and torus actions}, Duke Math. J. {\bf 62} (1991), no.
2, 417--451.

\bibitem{denham.suciu} G.~Denham and A.~Suciu, {\em Moment-angle complexes,
monomial ideals and Massey products}, arXiv:math/0512497v4
[math.AT].

\bibitem{dobson} A.~Dobson, {\em $KO$-theory of Thom complexes}, University of
Manchester thesis, 2007.

\bibitem{mahowald.ray} M.~Mahowald and N.~Ray, {\em A Note on the Thom Isomorphism},
Proceedings of the American Mathematical Society, {\bf 82\/} (2):307--308, 1981.

\bibitem{nishimura} Y.~Nishimura, {\em The Quasi $KO$-types of Toric Manifolds}, 
In Proceedings of the International Conference in Toric Topology; Osaka City University 
2006, 287--292. Edited by Megumi Harada, Yael Karshon, Mikiya Masuda, and Taras 
Panov. Contemporary Mathematics {\bf 460\/}. AMS, Providence RI, 2008.

\bibitem{notbohm.ray} D.~Notbohm and N.~Ray. \emph{On Davis-Januszkiewicz
Homotopy Types~I; formality and rationalization}, Algebraic and
Geometric Topology {\bf 5}:31--51, ( 2005).


\bibitem{panov.ray.vogt} T.~Panov, N.~ Ray, and R.~Vogt, {\em  Colimits, Stanley-
Reisner Algebras, and Loop Spaces}, In Algebraic Topology: Categorical Decomposition 
Techniques, Progress in Mathematics {\bf 215}:261--291. Birkh\"{a}user, Basel, 2003.

\

\

\end{thebibliography}

\end{document}